\definecolor{Red}{rgb}{1,0,0}
\definecolor{Blue}{rgb}{0,0,1}
\definecolor{Olive}{rgb}{0.41,0.55,0.13}
\definecolor{Yarok}{rgb}{0,0.5,0}
\definecolor{Green}{rgb}{0,1,0}
\definecolor{MGreen}{rgb}{0,0.8,0}
\definecolor{DGreen}{rgb}{0,0.55,0}
\definecolor{Yellow}{rgb}{1,1,0}
\definecolor{Cyan}{rgb}{0,1,1}
\definecolor{Magenta}{rgb}{1,0,1}
\definecolor{Orange}{rgb}{1,.5,0}
\definecolor{Violet}{rgb}{.5,0,.5}
\definecolor{Purple}{rgb}{.75,0,.25}
\definecolor{Brown}{rgb}{.75,.5,.25}
\definecolor{Grey}{rgb}{.5,.5,.5}
\newcommand{\bs}{\boldsymbol{\sigma}}
\newcommand{\ind}{\mathbbm{1}}
\def\OPT{{\mathsf{H^*}}}
\newcommand{\R}{\mathbb{R}}
\newcommand{\N}{\mathbb{N}}
\newcommand{\ip}[2]{\langle{#1},{#2}\rangle} 
\renewcommand{\ip}[2]{\left\langle#1,#2\right\rangle}
\renewcommand{\R}{\mathbb{R}}
\newcommand{\cN}{{\bf \mathcal{N}}}
\newcommand{\ignore}[1]{\relax}
\newtheorem{theorem}{Theorem}[section]
\newtheorem{remark}[theorem]{Remark}
\newtheorem{lemma}[theorem]{Lemma}
\newtheorem{proposition}[theorem]{Proposition}
\newtheorem{definition}[theorem]{Definition}
\renewcommand{\ip}[2]{\left\langle#1,#2\right\rangle}
\newcounter{parentnumber}
\def\BState{\State\hskip-\ALG@thistlm}
\definecolor{Red}{rgb}{1,0,0}
\definecolor{Blue}{rgb}{0,0,1}
\definecolor{Olive}{rgb}{0.41,0.55,0.13}
\definecolor{Green}{rgb}{0,1,0}
\definecolor{MGreen}{rgb}{0,0.8,0}
\definecolor{DGreen}{rgb}{0,0.55,0}
\definecolor{Yellow}{rgb}{1,1,0}
\definecolor{Cyan}{rgb}{0,1,1}
\definecolor{Magenta}{rgb}{1,0,1}
\definecolor{Orange}{rgb}{1,.5,0}
\definecolor{Violet}{rgb}{.5,0,.5}
\definecolor{Purple}{rgb}{.75,0,.25}
\definecolor{Brown}{rgb}{.75,.5,.25}
\definecolor{Grey}{rgb}{.5,.5,.5}
\definecolor{Pink}{rgb}{1,0,1}
\definecolor{DBrown}{rgb}{.5,.34,.16}
\definecolor{Black}{rgb}{0,0,0}
\let\OLDthebibliography\thebibliography
\renewcommand\thebibliography[1]{
  \OLDthebibliography{#1}
  \setlength{\parskip}{0pt}
  \setlength{\itemsep}{0pt plus 0.3ex}
}
\title{Sharp Thresholds for the Overlap Gap Property: Ising 
 $p$-Spin Glass and Random $k$-SAT}
\author{{\sf Eren C. K{\i}z{\i}lda\u{g}}\thanks{Department of Statistics, University of Illinois at Urbana-Champaign; e-mail: {\tt kizildag@illinois.edu}}}
\begin{document}
\maketitle
\begin{abstract}
The Ising $p$-spin glass and random $k$-SAT are two canonical examples of disordered systems that play a central role in understanding the link between geometric features of optimization landscapes and computational tractability. Both models exhibit hard regimes where all known polynomial-time algorithms fail and possess the multi Overlap Gap Property ($m$-OGP), an intricate geometrical property that rigorously rules out a broad class of algorithms exhibiting input stability. 

We establish that, in both models, the symmetric $m$-OGP undergoes a sharp phase transition, and we pinpoint its exact threshold. For the Ising $p$-spin glass, our results hold for all sufficiently large $p$; for the random $k$-SAT, they apply to all $k$ growing mildly with the number of Boolean variables. Notably, our findings yield qualitative insights into the power of OGP-based arguments. A particular consequence for the Ising $p$-spin glass is that the strength of the $m$-OGP in establishing algorithmic hardness grows without bound as $m$ increases. 

These are the first sharp threshold results for the $m$-OGP. Our analysis hinges on a judicious application of the second moment method, enhanced by concentration. While a direct second moment calculation fails, we overcome this via a refined approach that leverages an argument of~\cite{frieze1990independence} and exploiting concentration properties of carefully constructed random variables.
\end{abstract}
\newpage
\tableofcontents
\newpage
\section{Introduction}\label{sec:Intro}
In this paper, we focus on two fundamental models in the statistical mechanics of disordered systems: Ising $p$-spin glass and random $k$-SAT. Both models have been extensively studied across probability, theoretical computer science, and physics; see~\cite{mezard2009information,stein2013spin} and references therein. Sharing profound structural similarities, including rich phase transitions and rugged landscapes, they are central to understanding the interplay between the geometrical properties of optimization landscapes and computational phase transitions in average-case settings. Our particular focus is on their inherent algorithmic barriers.

Both models exhibit regimes where no polynomial-time algorithms are known to succeed. For the Ising $p$-spin glass, which involves optimizing random polynomials, existing algorithms find strictly suboptimal solutions. For the random $k$-SAT, a random constraint satisfaction problem (CSP), known algorithms operate only at constraint densities far below the satisfiability threshold. Due to their average-case nature, classical complexity theory offers little insight into these barriers. An extensive body of work has instead sought unconditional lower bounds, showing that both models exhibit the multi Overlap Gap Property ($m$-OGP)---an intricate geometrical feature of the optimization landscape that rigorously rules out a broad class of algorithms~\cite{gamarnik2017performance,chen2019suboptimality,gamarnik2020low,bresler2021algorithmic,huang2021tight,gamarnik2023shattering,gamarnik2021overlap}. These works have focused almost exclusively on 
establishing such lower bounds via the OGP and exploring their tightness---i.e., whether they match known algorithmic guarantees---often by analyzing increasingly refined versions of the OGP as $m$ grows. For fixed $m\in\mathbb{N}$, a canonical result asserts that there exists a threshold $\gamma(m)$ such that when $\gamma>\gamma(m)$, the corresponding model exhibits $m$-OGP and certain algorithms necessarily fail; here, $\gamma$ denotes a natural model parameter. However, a finer, two-sided understanding of the link between OGP and hardness remains elusive. In some models, OGP-based lower bounds match provable average-case hardness~\cite{vafa2025symmetric}. At the same time, certain optimization problems exhibit the OGP, yet still admit efficient algorithms~\cite{li2024some}. These examples collectively highlight the need for a more nuanced understanding of OGP-based methods---one that addresses not only how OGP rules out algorithms, but also how its power evolves with $m$ and what its absence implies.

To this end, a natural question arises: does the OGP indeed become more powerful as $m$ increases? This is directly tied to understanding the regime where $m$-OGP is absent, as elaborated below. Typically, the presence of OGP is established via the first moment method, with the threshold $\gamma(m)$ often improving with $m$. Consequently, a common rule of thumb in the field is that increasing $m$ leads to stronger algorithmic lower bounds.\footnote{A similar monotonicity in $m$ has also been conjectured in the context of spherical spin glasses~\cite{arous2021shattering}.} However, this approach offers no insight into the regime $\gamma < \gamma(m)$. To \emph{truly} establish that the power of $m$-OGP grows with $m$, one must precisely locate its threshold---by showing that $m$-OGP is present for $\gamma>\gamma(m)$ and absent for $\gamma<\gamma(m)$. Without this, the observed improvement in $\gamma(m)$ 
could be spurious, merely an artifact of the first moment method. Such a sharp analysis would also rigorously justify the prevailing practice of increasing $m$ to tighten lower bounds. This issue is particularly stark in the binary perceptron, where the first moment method fails: in certain regimes, the predicted $m$-OGP threshold actually worsens with $m$, and the very presence of OGP remains unknown for $m\ge 4$~\cite[Section~II.B]{gamarnik2022algorithms}. Moreover, the absence of OGP is not merely of theoretical interest: for the Sherrington-Kirkpatrick (SK) model, algorithms are known to succeed under the widely believed assumption that OGP is absent~\cite{montanari2019FOCS}.  These examples underscore that understanding both the presence and absence of OGP—especially in the regime $\gamma < \gamma(m)$—is essential to fully grasp its power and limitations.

In this paper, we pinpoint the precise threshold for the symmetric $m$-OGP---a widely used variant of the OGP (see below)---for both models, thereby demonstrating a sharp phase transition. These thresholds are particularly transparent and strictly monotonic in $m$, in the large-$p$ and large-$k$ regimes. Our results yield qualitative insights into the power of OGP-based arguments. Notably, for the $p$-spin model, we establish that the power of $m$-OGP in proving algorithmic hardness increases indefinitely with $m$. In the context of spin glasses and shattering, our results also explain why a certain \emph{soft-OGP} argument by El Alaoui~\cite{alaoui2024near} is necessary; see Section~\ref{sec:Overview}.

For an overview of our results, see Section~\ref{sec:Overview}. We next  formally introduce the two models.
\subsection{Ising Pure $p$-Spin Glass Model}
For a fixed integer $p\ge 2$, an order-$p$ tensor $\boldsymbol{J}=(J_{i_1,\dots,i_p}:1\le i_1,\dots,i_p\le n)$ with i.i.d.\,standard normal entries, $J_{i_1,\dots,i_p}\sim \cN(0,1)$, and $\bs\in\R^n$, the $p$-spin Hamiltonian is given by
\begin{equation}\label{eq:Hamiltonian-p-spin}
    H_{n,p}(\bs)=n^{-\frac{p+1}{2}}\ip{\boldsymbol{J}}{\boldsymbol{\bs}^{\otimes p}} = n^{-\frac{p+1}{2}}\sum_{1\le i_1,\dots,i_p\le n}J_{i_1,\dots,i_p}\bs_{i_1}\cdots\bs_{i_p}.
\end{equation}
Introduced by Derrida~\cite{derrida1980random}, the $p$-spin model generalizes the SK model ($p=2$)~\cite{sherrington1975solvable} by incorporating $p$-body interactions. 
Our particular focus is on the \emph{Ising} case, where the configuration space is the discrete hypercube, $\bs\in \Sigma_n:=\{-1,1\}^n$.\footnote{The \emph{spherical} case where $\bs$ lies in the hypersphere of radius $\sqrt{n}$, $\|\bs\|_2=\sqrt{n}$, is also studied extensively.} A key quantity regarding spin glass models is the limiting \emph{free energy} $F(\beta):= \lim_{n\to\infty}\ln Z_\beta/n$, where $\quad Z_\beta = \sum_{\bs\in\Sigma_n}\exp(\beta n H_{n,p}(\bs))$ at \emph{inverse temperature}
$\beta>0$. The limiting free energy for the SK model was predicted in a celebrated work of Parisi~\cite{parisi1979infinite} using the non-rigorous replica method. Following the work of Guerra~\cite{guerra2003broken}, Talagrand~\cite{talagrand2006parisi} rigorously verified Parisi's prediction in a breakthrough paper; his proof in fact extends to all even $p$. For general $p$, Parisi formula was verified by Panchenko~\cite{panchenko2014parisi}, following the work Aizenman, Sims, and Starr~\cite{aizenman2003extended}. The existence of the free energy limit was shown earlier by Guerra and Toninelli~\cite{guerra2002thermodynamic} using the so-called interpolation method. Far more is known rigorously regarding the $p$-spin glass models, including the ultrametricity of the asymptotic Gibbs measure~\cite{panchenko2013parisi,jagannath2017approximate,chatterjee2021average}, the TAP equations~\cite{auffinger2019thouless} and the TAP free energy~\cite{subag2018free,chen2023generalized}, a certain phase diagram~\cite{toninelli2002almeida,auffinger2015properties,jagannath2017some}, shattering~\cite{arous2021shattering,alaoui2023shattering,gamarnik2023shattering}, and algorithmic results (see below). Furthermore, more general spin glass models have also been studied in the literature. A particularly relevant case is the \emph{vector spin models} where the states are tuples of points in $\R^n$ (as opposed to points in $\R^n$) with a prescribed overlap, see~\cite{panchenko2018free,ko2020free}.
For further pointers to relevant literature, see the textbooks~\cite{talagrand2010mean,talagrand2011mean,panchenko2013sherrington}.
\paragraph{Finding a Near-Ground State in Polynomial-Time} Define the \emph{ground-state energy} by $\OPT:= \lim_{n\to\infty}\max_{\bs\in\Sigma_n} H_{n,p}(\bs)$,
which can be recovered as the zero temperature limit of the Parisi formula, $\OPT=\lim_{\beta\to\infty}F(\beta)/\beta$.\footnote{See also~\cite{auffingerchen} for a generalized
Parisi formula for $\OPT$ that does not require $\beta\to\infty$ limit.} Given $\OPT$, an algorithmic question is to find a near ground-state efficiently. That is, given $\boldsymbol{J}\in(\R^n)^{\otimes p}$ and $\epsilon>0$, find in polynomial time a $\bs_{\mathrm{ALG}}\in\Sigma_n$ such that $H_{n,p}(\bs_{\mathrm{ALG}})\ge (1-\epsilon)\OPT$, ideally with high probability (w.h.p.). For the SK model, this problem was solved by Montanari~\cite{montanari2019FOCS}, who developed an Approximate Message Passing (AMP) type algorithm. Montanari's algorithm was inspired by an algorithm of Subag~\cite{subag2021following} for spherical mixed $p$-spin model; it is contingent on a widely believed conjecture that the SK model does not exhibit the OGP. More recently,~\cite{el2021optimization,sellke2021optimizing} extended Montanari's algorithm to Ising mixed $p$-spin models. For any $\epsilon>0$, these algorithms also return a $\bs_{\mathrm{ALG}}$ for which $H_{n,p}(\bs_{\mathrm{ALG}})\ge (1-\epsilon)\OPT$ w.h.p., provided the underlying mixed $p$-spin model does not exhibit the OGP. It is known, however, that the Ising $p$-spin glass model exhibits the OGP for $p\ge 4$~\cite[Theorem~3]{chen2019suboptimality}, and that the OGP serves a rigorous barrier to AMP type algorithms~\cite{gamarnikjagannath2021overlap}. Specifically, for any even $p\ge 4$, there exists a value $\bar{\mu}$ such that no AMP type algorithm can find a $\bs_{\mathrm{ALG}}$ with $H_{n,p}(\bs_{\mathrm{ALG}})\ge \OPT-\bar{\mu}$~\cite{gamarnikjagannath2021overlap}. This algorithmic lower bound was later extended to low-degree polynomials~\cite{gamarnik2020low} and low-depth Boolean circuits~\cite{gamarnik2021circuit}. However, classical OGP~\cite{chen2019suboptimality} and the aforementioned lower bounds do not match the best known algorithmic threshold. More recently, Huang and Sellke~\cite{huang2021tight,huang2023algorithmic} introduced a very elegant \emph{branching OGP} consisting of an ultrametric tree of solutions and subsequently obtained tight bounds for Lipschitz algorithms.
\subsection{Random $k$-SAT Model} 
Given Boolean variables $x_1,\dots,x_n$, a $k$-clause $\mathcal{C}$ is defined as a disjunction of $k$ literals $\{y_1,\dots,y_k\}$ chosen from $\{x_1,\dots,x_n,\bar{x}_1,\dots,\bar{x}_n\}$, i.e., $\mathcal{C}=y_1 \vee \cdots \vee y_k$. A random $k$-SAT formula $\Phi$ is a conjunction of $M$ $k$-clauses $\mathcal{C}_1,\dots,\mathcal{C}_M$, with each of its $kM$ literals sampled independently and uniformly from $\{x_1,\dots,x_n,\bar{x}_1,\dots,\bar{x}_n\}$: $\Phi = \mathcal{C}_1\wedge \cdots \wedge \mathcal{C}_M$.\footnote{This model allows clauses to contain repeated literals, or both a variable $x_i$ and its negation $\overline{x}_i$. However, we focus on the regime $k=\Omega(\ln n)$, where such occurrences are rare as $n\to\infty$. This is standard in the literature~\cite{frieze2005random,coja2008random,bresler2021algorithmic}.} Throughout, we focus on the regime $M=\Theta(n)$ and $n\to\infty$, where we refer to $\alpha:= M/n$ as the \emph{clause density}. 

Arguably the most fundamental question about random $k$-SAT is \emph{satisfiability}: when does a satisfying assignment, i.e., a $\bs\in\{0,1\}^n$ with $\Phi(\bs)=1$, exist? For fixed $k\in \N$, Franco and Paull~\cite{franco1983probabilistic} showed, using a simple first moment argument, that a random $k$-SAT formula is w.h.p.\,unsatisfiable if $\alpha\ge 2^k\ln 2$. This was later refined by Kirousis et al.~\cite{kirousis1998approximating},
 who established a sharper bound 
 $\alpha\ge 2^k\ln 2-\frac12(\ln 2+1)+o_k(1)$, where the $o_k(1)\to 0$ as $k\to\infty$. On the algorithmic side, Chv{\'a}tal and Reed~\cite{chvatalreed} showed that a simple algorithm known as unit clause w.h.p.\,finds a satisfying assignment if $\alpha<2^k/k$; see also~\cite{ming1990probabilistic}, who showed that the same algorithm succeeds with constant probability. The asymptotic gap between $2^k/k$ and $\Omega_k(2^k)$ was substantially narrowed by Achlioptas and Moore~\cite{achlioptas2002asymptotic}. Using a non-constructive second moment argument, they showed that a random $k$-SAT formula is w.h.p.\,satisfiable if $\alpha \le 2^{k-1}\ln 2-O_k(1)$. Coja-Oghlan and and Panagiotou~\cite{cojapanagio} later refined this bound to $\alpha\le 2^k\ln 2-\frac12(\ln 2+1)+o_k(1)$, nearly matching the lower bound of~\cite{kirousis1998approximating} up to $o_k(1)$ terms. A major breakthrough by Ding, Sly, and Sun~\cite{ding2015proof} confirmed, for large $k$, that the satisfiability threshold is the value predicted by M{\'e}zard, Parisi and Zecchina~\cite{mezard2002analytic}: 
for any large $k$, there exists a threshold $\alpha_*(k)$ such that a random $k$-SAT formula is satisfiable for $\alpha<\alpha_*(k)$ and unsatisfiable for $\alpha>\alpha_*(k)$, both w.h.p.\,as $n\to\infty$. For a detailed discussion on the random $k$-SAT, see the introduction of~\cite{bresler2021algorithmic} and the survey by~\cite{achlioptas2009random}.
\paragraph{Finding a Satisfying Assignment Efficiently} Given the existence of satisfying assignments for $\alpha=O_k(2^k)$, a natural algorithmic question arises: can we find such an assignment efficiently? As mentioned earlier, the simple unit clause algorithm finds a satisfying assignment in polynomial time for $\alpha\le 2^k/k$. Later,  Coja-Oghlan~\cite{coja2010better} devised a polynomial-time algorithm that succeeds for $\alpha\le (1+o_k(1))2^k\ln k/k$, beyond which no efficient algorithm is known. Interestingly, the threshold $(1+o_k(1))2^k\ln k/k$ also marks the onset of \emph{clustering} in the solution space of random $k$-SAT, as predicted by Krzakala et al.~\cite{krzakala2007gibbs} and rigorously verified by Achlioptas and Coja-Oghlan~\cite{achlioptas2008algorithmic}. Beyond this threshold, the solution space of random $k$-SAT fragments into exponentially many well-separated sub-dominant clusters, which are $\Omega(n)$ apart, each containing only an exponentially small fraction of solutions. This phenomenon led to the conjecture that $(1+o_k(1))2^k\ln k/k$ is the algorithmic threshold for the random $k$-SAT.\footnote{It is worth noting that clustering alone does not necessarily imply algorithmic hardness. For instance, the binary perceptron model exhibits an extreme form of clustering at all densities, including the `easy' regime where polynomial-time algorithms exist; see~\cite{perkins2021frozen,abbe2021binary,abbe2021proof,gamarnik2022algorithms,gamarnik2023geometric}.} Specifically, it is conjectured that no polynomial-time algorithm can find (w.h.p.) a satisfying assignment above this threshold. While this conjecture remains open, lower bounds against particular classes of algorithms have been obtained. 
Using the symmetric $m$-OGP, Gamarnik and Sudan~\cite{gamarnik2017performance} showed that \emph{sequential local algorithms} with a moderately growing number of message passing iterations fail to find a satisfying assignment for a variant of the random $k$-SAT model known as the random Not-All-Equal-$k$-SAT (NAE-$k$-SAT) for $\alpha\ge 2^{k-1}\log^2 k/k$. Hetterich~\cite{hetterich2016analysing} proved that \emph{Survey Propagation Guided Decimation} algorithm fails for $\alpha\ge (1+o_k(1))2^k\log k/k$ even with an unbounded number of iterations.  Coja-Oglan, Haqshenas, and Hetterich~\cite{coja2017walksat} showed that \emph{Walksat} stalls for $\alpha\ge C\cdot 2^k\log^2 k/k$, for some absolute constant $C>0$. More recently, Bresler and Huang~\cite{bresler2021algorithmic} demonstrated that \emph{low-degree polynomials} fail to find a satisfying assignment for the random $k$-SAT when $\alpha\ge\kappa^*(1+o_k(1))2^k\ln k/k$, where $\kappa^*\approx 4.911$. Their approach leverages a novel asymmetric variant of the $m$-OGP discussed in Section~\ref{sec:OGP-background}.
\paragraph{Significance of Symmetric $m$-OGP} 
Our focus is on the symmetric $m$-OGP, which plays a central role in establishing unconditional algorithmic lower bounds. It yields the sharpest known bounds in central models such as the binary perceptron~\cite{gamarnik2022algorithms,li2024discrepancy} and discrepancy minimization~\cite{gamarnik2023geometric}, as well as nearly sharp bounds for NAE-$k$-SAT, a closely related variant of random $k$-SAT~\cite{gamarnik2017performance}. In Ising $p$-spin glasses, symmetric $m$-OGP is asymptotically tight as $p$ grows: its onset matches the algorithmic threshold of the Random Energy Model, the formal $p\to\infty$ limit of
the Ising $p$-spin glasses~\cite{derrida1980random,talagrand2000rigorous,addario2020algorithmic,gamarnik2023shattering}. For fixed $p$, the sharpest lower bounds follow from a very elegant branching OGP argument~\cite{huang2021tight}; however, this  applies only to overlap-concentrated algorithms---a strict subclass of stable algorithms that excludes, for example, low-degree polynomials. In contrast, symmetric $m$-OGP applies to this broader class, albeit yielding weaker bounds. 
Moreover, branching OGP yields lower bounds only for even $p$, whereas symmetric $m$-OGP applies to both even and odd $p$. In fact, for odd $p$, it provides the only known algorithmic lower bounds for Ising $p$-spin glasses~\cite{gamarnik2023shattering}.
Further underscoring its relevance, the case $m=2$, or pairwise OGP, is directly linked to the shattering phenomenon---a notion from spin glass theory that has shaped our understanding of algorithmic phase transitions in random CSPs~\cite{achlioptas2006solution,achlioptas2008algorithmic,achlioptas2011solution}, see below. By contrast, such a connection is less clear for more sophisticated OGP variants. Taken together, these observations underscore the significance of symmetric $m$-OGP and highlight the need to fully understand its power.

For $m\in\mathbb{N}$, $\gamma>\gamma_{\mathrm{p-spin}}(m):=1/\sqrt{m}$, and sufficiently large $p$, Ising $p$-spin glasses exhibit symmetric $m$-OGP above the threshold $\gamma\sqrt{2\ln 2}$~\cite[Theorem~2.11]{gamarnik2023shattering}.\footnote{The value $\sqrt{2\ln 2}$ corresponds to the ground-state energy of the Random Energy Model~\cite{talagrand2000rigorous,gamarnik2023shattering}.}: no tuples of equidistant solutions at a prescribed distance exist beyond this threshold (see Definition~\ref{def:m-ogp}). Likewise, for $m\in\mathbb{N}$, $\gamma>\gamma_{\mathrm{k-SAT}}(m):=1/m$, and sufficiently large $k$, the random $k$-SAT model exhibits symmetric $m$-OGP above the constraint density $\gamma 2^k\ln 2$; see~\cite{gamarnik2017performance} or Theorem~\ref{thm:m-ogp-k-sat} below. That is, no tuples of nearly equidistant satisfying assignments at a prescribed distance exist beyond  the density $\gamma 2^k\ln 2$ (see Definition~\ref{def:m-ogp-k-sat}). 
\subsection{Overview of Our Results}\label{sec:Overview}
As noted, studying the absence of OGP is essential for a deeper understanding of OGP-based methods and for developing a more complete average-case theory, yet this question has not been rigorously addressed. We initiate this direction by investigating the absence of symmetric $m$-OGP for Ising $p$-spin glass and random $k$-SAT for large $p$ and $k$---two fundamental average-case models. This is a natural starting point: extending such investigations to other variants of OGP (e.g., asymmetric OGP~\cite{bresler2021algorithmic} or branching OGP~\cite{huang2021tight}) appears far more challenging, and symmetric OGP provides a natural foundation for these extensions. Moreover, both Ising $p$-spin and random $k$-SAT are more tractable for large $p$ and $k$. Rigorous results for small, fixed $k$ remain elusive for random $k$-SAT, and results for fixed $p$ are extremely challenging for spin glasses. This reflects a broad trend in random structures; see Remark~\ref{remark:Large-p-k}.

Prior work, focusing exclusively on leveraging symmetric $m$-OGP to establish algorithmic hardness,  overlooked the case $\gamma<\gamma(m)$, where $\gamma_{\mathrm{p-spin}}(m)=1/\sqrt{m}$ or $\gamma_{\mathrm{k-SAT}}(m)=1/m$. Our first contribution is to precisely determine the thresholds for both models, providing deeper insights into
the power of symmetric $m$-OGP. All statements below hold w.h.p.\,as $n\to\infty$. We begin with the Ising spin glasses.
\begin{theorem}{(Informal, see Theorem~\ref{thm:m-ogp-tight})}\label{thm:absent-informal-spin}
For any $m\in\mathbb{N}$, $\gamma<\gamma_{\mathrm{p-spin}}(m)=1/\sqrt{m}$ and sufficiently large $p$, the symmetric $m$-OGP is absent in the Ising $p$-spin glass below the value $\gamma\sqrt{2\ln 2}$.
\end{theorem}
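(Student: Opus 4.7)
The plan is to establish the absence of the symmetric $m$-OGP by producing, with high probability over the disorder $\boldsymbol{J}$, a valid $m$-tuple. Fix $m \in \N$ and $\gamma < 1/\sqrt{m}$, pick an intermediate $\gamma' \in (\gamma, 1/\sqrt{m})$, and let $Z$ denote the number of $m$-tuples $(\bs^{(1)}, \ldots, \bs^{(m)}) \in (\Sigma_n)^m$ satisfying the prescribed near-equidistance constraints on pairwise overlaps and with $H_{n,p}(\bs^{(i)}) \geq \gamma' \sqrt{2 \ln 2}$ for every $i$. I first compute $\E{Z}$. The joint law of $(H_{n,p}(\bs^{(i)}))_{i=1}^m$ is centered Gaussian with $(i,j)$-covariance entry equal to $n^{-1} (\ip{\bs^{(i)}}{\bs^{(j)}}/n)^p$; since the prescribed overlaps are bounded away from $\pm 1$, these off-diagonal entries vanish as $p \to \infty$, so for large $p$ the $m$ energies are effectively independent. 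A standard Gaussian tail estimate combined with a volumetric count of admissible $m$-tuples then yields $\E{Z} = e^{\Omega(n)}$ precisely in the regime $\gamma' < 1/\sqrt{m}$.

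The natural next step, a Paley--Zygmund second-moment argument, runs into the standard obstacle that $\E{Z^2}/\E{Z}^2$ need not tend to $1$: expanding $\E{Z^2}$ as a sum over the joint Gram matrix $Q$ of the two $m$-tuples, pairs with nontrivial cross-overlaps produce correlated Gaussian tails that a priori dominate. For large $p$, however, any off-diagonal $|Q_{ij}|^p$ with $|Q_{ij}|$ bounded away from $1$ is suppressed. Stratifying the sum by the range of the off-diagonal entries of $Q$ and invoking Laplace-style large-deviation estimates for the joint Gaussian tail, I expect to still obtain the weaker but sufficient bound $\E{Z^2} \leq C \, \E{Z}^2$ for a finite constant $C = C(m, \gamma')$ independent of $n$. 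Paley--Zygmund then yields $\pr[Z > 0] \geq 1/C > 0$, a positive constant rather than high probability.

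To upgrade to $1 - o(1)$, I follow Frieze~\cite{frieze1990independence} and combine the second-moment estimate with Gaussian concentration. Define
\[
F(\boldsymbol{J}) = \max \Bigl\{ \min_{1 \leq i \leq m} H_{n,p}(\bs^{(i)}) : (\bs^{(1)}, \ldots, \bs^{(m)}) \text{ satisfies the prescribed equidistance constraints} \Bigr\}.
\]
For each $\bs$, the map $\boldsymbol{J} \mapsto H_{n,p}(\bs)$ is linear with $\|\bs^{\otimes p}\|_2 = n^{p/2}$, so it is $n^{-1/2}$-Lipschitz uniformly in $\bs$; hence $F$, a supremum of minima of such maps, is also $n^{-1/2}$-Lipschitz. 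Borell--TIS then gives $\pr[|F - \E{F}| > t] \leq 2 \exp(-n t^2 / 2)$ for every $t > 0$. Since $\{Z > 0\} \subseteq \{F \geq \gamma' \sqrt{2 \ln 2}\}$, the Paley--Zygmund conclusion yields $\pr[F \geq \gamma' \sqrt{2 \ln 2}] \geq 1/C$, and concentration then forces $\E{F} \geq \gamma' \sqrt{2 \ln 2} - o(1)$. Applying Borell--TIS once more in the reverse direction with $t = (\gamma' - \gamma) \sqrt{2 \ln 2} / 2$ yields $\pr[F \geq \gamma \sqrt{2 \ln 2}] \to 1$, producing with high probability a valid equidistant $m$-tuple whose energies all exceed $\gamma \sqrt{2 \ln 2}$. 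The principal obstacle is the second-moment bookkeeping: bounding $\E{Z^2}/\E{Z}^2$ by a constant independent of $n$ requires controlling a Laplace-type integral over every admissible $(2m) \times (2m)$ joint Gram matrix, and the width of the overlap window defining $Z$ must be tuned carefully so that the $p$-th power suppression of cross-correlations outweighs the combinatorial entropy of all off-diagonal overlap patterns.
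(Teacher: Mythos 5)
Your high-level strategy matches the paper exactly: compute $\E{Z}$ by Gaussian tails, bound $\E{Z^2}$ by stratifying on the cross-Gram matrix, observe that $F$ (the paper's $T_{m,\xi,\eta}$) is $n^{-1/2}$-Lipschitz in $\boldsymbol{J}$ and hence concentrates, and invoke Frieze's bootstrap to convert a weak second-moment bound into a high-probability statement. The concentration step, the Lipschitz constant, the choice of intermediate level $\gamma'$, and the final two applications of concentration are all essentially identical to what the paper does.

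The one substantive error is your expectation that the second-moment bookkeeping will produce $\E{Z^2} \le C\,\E{Z}^2$ for an $n$-independent constant $C$, so that Paley--Zygmund already yields $\pr[Z > 0] \ge 1/C > 0$. This is too strong and is not what actually comes out. For any \emph{fixed} $p$, the intra-tuple overlaps sit at level $\xi > 0$, giving $p$-th-power correlations $\xi^p$ that are small but nonzero; and even in the ``well-separated'' part of the second-moment sum the residual cross-correlations contribute up to $(1-2\epsilon^*)^p$. Slepian gives a lower bound on $\E{Z}$ while the multivariate Savage-type tail gives an upper bound on the two-tuple probability, and the mismatch between the two is not $O(1)$ but $\exp_2\bigl(\Theta(n)\,\Delta_p\bigr)$ with $\Delta_p \asymp m\sqrt{\xi^{2p}+(1-2\epsilon^*)^{2p}}$. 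Consequently the paper only obtains $\pr[Z\ge 1]\ge \exp(-nc^p)$ for some $c\in(0,1)$, which is exponentially small in $n$ for each fixed $p$. A direct Paley--Zygmund conclusion of the kind you propose fails.

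What rescues your argument is that the Frieze bootstrap is more forgiving than you assumed. You do not need $\pr[Z>0]$ to be a constant; you only need $\pr[F\ge\gamma'\sqrt{2\ln 2}] \ge \exp(-n\epsilon^2/2)$ for some $\epsilon$ strictly smaller than $(\gamma'-\gamma)\sqrt{2\ln 2}$. Since $c^p\to 0$ as $p\to\infty$, you can choose $p$ large enough (depending on $m,\gamma,\gamma',\xi$) so that $c^p < \epsilon^2/2$, and then the concentration inequality $\pr[|F-\E{F}|>t]\le 2\exp(-nt^2/2)$ still forces $\E{F}\ge \gamma'\sqrt{2\ln 2}-\epsilon$, after which the final application of concentration with $t=\epsilon$ gives $F\ge\gamma\sqrt{2\ln 2}$ with probability $1-\exp(-\Theta(n))$. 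You should replace the claim ``$\E{Z^2}\le C\,\E{Z}^2$'' with the weaker, correct statement ``$\E{Z^2}/\E{Z}^2 \le \exp(\Theta(n)\Delta_p)$ with $\Delta_p\to 0$ as $p\to\infty$,'' and then note explicitly that this exponentially small probability bound is still compatible with the concentration rate once $p$ is taken large. With that correction, your proposal reproduces the paper's proof.
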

Namely, for any $m\in\mathbb{N}$ and sufficiently large $p$, nearly equidistant $m$-tuples of solutions exist at all pairwise distances below the value $\sqrt{(2\ln 2)/m}$. Theorem~\ref{thm:absent-informal-spin} offers a qualitative insight into the OGP-based arguments. Specifically, in the limit of large $p$, it reveals that the power of OGP in establishing algorithmic hardness indeed strictly improves with $m$. We next address the random $k$-SAT.
\begin{theorem}{(Informal, see Theorem~\ref{thm:m-ogp-k-sat-absent})}\label{thm:absent-informal-sat}
For any $m\in\N$, $\gamma<\gamma_{\mathrm{k-SAT}}(m)=1/m$ and $k=\Omega(\ln n)$, the symmetric $m$-OGP is absent in the random $k$-SAT below the density $\gamma 2^k\ln 2$.
\end{theorem}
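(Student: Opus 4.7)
The plan is to exhibit, with high probability, an $m$-tuple of satisfying assignments at the pairwise overlap profile that is forbidden by the symmetric $m$-OGP. I would fix $\gamma<1/\sqrt{m}$, set $M=\gamma 2^k(\ln 2)n$, and define $N=N(\Phi)$ as the number of ordered tuples $(\bs^{(1)},\dots,\bs^{(m)})\in(\{-1,+1\}^n)^m$ that (i) jointly satisfy $\Phi$ and (ii) have all pairwise normalized overlaps $\tfrac{1}{n}\langle\bs^{(i)},\bs^{(j)}\rangle$ inside the forbidden window appearing in the definition of the symmetric $m$-OGP. It would then suffice to prove that $\pr(N>0)=1-o(1)$.

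The first step would be a first moment calculation. By linearity, $\E{N}$ factors into the number of geometrically admissible $m$-tuples (an entropic term of the form $2^{nmH}$, where $H$ is the per-coordinate joint entropy of the $m$ spins in the extremal symmetric orbit pattern) times $(1-p_m)^M$, where $p_m$ is the probability that a random $k$-clause is violated by at least one of the $m$ assignments. I would expand $p_m$ by inclusion--exclusion over subsets $S\subseteq[m]$ and use $k=\Omega(\ln n)$ to control the higher-order terms; I expect to obtain $\E{N}=2^{\Omega(n)}$ throughout the regime $\gamma<1/\sqrt{m}$, with the scale $1/\sqrt{m}$ emerging from the balance between the tuple-entropy gain and the joint-violation cost at the optimal orbit, analogously to the $p$-spin case.

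The second step would be a controlled second moment combined with a Frieze-type bootstrap. Decomposing $\E{N^2}=\sum_{T_1,T_2}\pr(T_1\text{ and }T_2\text{ jointly satisfy }\Phi)$ according to the overlap profile between the two $m$-tuples $T_1,T_2$, the near-independent profile should contribute $(1+o(1))\E{N}^2$, but the highly correlated profiles may contribute an additional sub-exponential factor, yielding only $\E{N^2}\le e^{o(n)}\E{N}^2$. Paley--Zygmund would therefore give merely $\pr(N\ge\E{N}/2)\ge e^{-o(n)}$, which is insufficient on its own. Following Frieze~\cite{frieze1990independence}, I would then couple this with concentration of $N$ obtained from the clause-exposure Doob martingale $Y_t=\E{N\mid\mathcal{C}_1,\dots,\mathcal{C}_t}$ via Azuma or McDiarmid (exploiting that resampling a single clause changes $N$ only by a well-controlled amount relative to $\E{N}$): sub-exponential fluctuations on top of an exponentially-large mean would force $\{N=0\}$ to have vanishing probability.

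The main obstacle would be the second moment bound itself: two $m$-tuples can share assignments, or overlap them in highly asymmetric ways, and the joint satisfaction probability of the resulting up-to-$2m$ spins depends intricately on their joint orbit structure. The core technical task is to verify that, exponentially, the near-product profile dominates and that the atypical profiles can all be absorbed into the $e^{o(n)}$ slack; the assumption $k=\Omega(\ln n)$ is essential here because it suppresses precisely the subleading inclusion--exclusion terms that would otherwise invalidate the bound. Once this controlled second moment is in hand, the Frieze-style concentration step should be routine.
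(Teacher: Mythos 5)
Your setup (define $N$ as the count of geometrically admissible $m$-tuples of satisfying assignments, first moment via the clause-violation probability, second moment via an overlap decomposition) matches the paper's, but two things go wrong. First, the threshold you should be targeting is $\gamma<1/m$, not $1/\sqrt{m}$: the informal statement carries a typo, and the formal Theorem~\ref{thm:m-ogp-k-sat-absent} together with Theorem~\ref{thm:m-ogp-sharp-PT-sat} give $\gamma_m = 1/m$ for $k$-SAT (the $1/\sqrt m$ threshold belongs to the $p$-spin Hamiltonian, where the energy scales as $\sqrt{n}$ rather than linearly in $n$). The condition $h_b(\epsilon)+m\gamma<1$, which requires $\gamma<1/m$, is exactly what kills the near-coincidence part of the second moment.

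Second, and more seriously, the Frieze-style bootstrap you append is both unnecessary and, as stated, unusable in this regime. In the paper's proof the second moment splits into $A_\epsilon$ (pairs of tuples with some $\bs^{(i)}$ and $\bar{\bs}^{(j)}$ within Hamming distance $\epsilon n$) and $B_\epsilon$ (all cross-distances $> \epsilon n$). The $A_\epsilon$ term is not a ``sub-exponential factor'': after a counting argument and the choice $h_b(\epsilon)+m\gamma<1$ one gets $A_\epsilon/\mathbb{E}[N]^2 = e^{-\Theta(n)}$. The $B_\epsilon$ term, for $k\ge C\ln n$ with $C$ large enough, satisfies $B_\epsilon/\mathbb{E}[N]^2 \le 1+o_n(1)$ because the inclusion-exclusion corrections $n(1-\beta+\eta)^k$, $n(1-\epsilon)^k$, $n2^{-k}$ all vanish. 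Hence $\mathbb{E}[N^2]/\mathbb{E}[N]^2 \to 1$ and Paley--Zygmund alone gives $\mathbb{P}[N\ge 1]\to 1$; no rescue is needed. Moreover, your proposed rescue via an Azuma/McDiarmid martingale for $N$ itself would not work: resampling a single clause can change $N$ by an amount comparable to $\mathbb{E}[N]$ (every $m$-tuple in the geometry set whose satisfaction status flips on that clause contributes), so the Lipschitz constant of $N$ is exponentially large and the concentration bound is vacuous. Frieze's trick requires a $1$-Lipschitz statistic; the paper does use it, but only in Theorem~\ref{thm:m-ogp-k-sat-absent-constant-k} for constant $k$, and there the concentrating object is $Z_{\beta,\eta}$ (the max over admissible $m$-tuples of the min number of satisfied clauses), not $N$. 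If you insist on the Frieze route for $k=\Omega(\ln n)$ you would have to switch to such a Lipschitz statistic and would then only recover a ``nearly-satisfying'' statement (the $\kappa$-version in Definition~\ref{def:modified-k-sat}), which is weaker than what is being asked.
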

Theorem~\ref{thm:absent-informal-sat} asserts that for any $m\in\mathbb{N}$ and mildy growing $k$, $k=\Omega(\ln n)$, nearly equidistant $m$-tuples of satisfying assignments exist at all pairwise distances below the constraint density $(2^k\ln 2)/m$. Recall that the algorithmic threshold for the random $k$-SAT is asymptotic to $2^k\ln 2/k$ as $k\to\infty$. Consequently, Theorem~\ref{thm:absent-informal-k-sat} implies that achieving matching algorithmic lower bounds via the symmetric $m$-OGP requires $m$ to be superconstant. Indeed, sharp lower bounds for this model were obtained instead through an asymmetric variant of OGP~\cite{bresler2021algorithmic}. 

Theorem~\ref{thm:absent-informal-sat} holds under the assumption that $k$ grows mildly in $n$. For constant $k$, we establish: 

\begin{theorem}{(Informal, see Theorem~\ref{thm:m-ogp-k-sat-absent-constant-k})}\label{thm:absent-informal-k-sat}
    For any $m\in\mathbb{N}$, $\gamma<1/m$ and sufficiently large $k$, the symmetric $m$-OGP is absent  below the density $\gamma 2^k\ln 2$ for the set of assignments that satisfy a $1-\exp(-\Theta_k(k))$ fraction of clauses.
\end{theorem}
Namely, the $m$-OGP is absent for the set of assignments that violate a small fraction of clauses.

Combined with prior results on the $m$-OGP, Theorems~\ref{thm:absent-informal-spin}-\ref{thm:absent-informal-sat} establish a sharp phase transition.
\begin{theorem}{(Informal, see Theorems~\ref{thm:m-ogp-sharp-PT} and~\ref{thm:m-ogp-sharp-PT-sat})}\label{thm:PTT}
    For any $m\in\N$, the symmetric $m$-OGP undergoes a sharp phase transition in both the Ising $p$-spin glass and the random $k$-SAT models.
\end{theorem}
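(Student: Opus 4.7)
The plan is to derive the claimed sharp phase transition as an immediate consequence of pairing the presence results for the symmetric $m$-OGP with the absence results established earlier in this paper. For both models the sharp threshold turns out to be $\gamma_m = 1/\sqrt{m}$.

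On the \emph{presence} side I would invoke the standard first moment method. For the Ising $p$-spin model, the calculation in \cite[Theorem~2.11]{gamarnik2023shattering} shows that for every $\gamma > 1/\sqrt{m}$ and all sufficiently large constant $p$, the expected number of $m$-tuples of nearly equidistant $\gamma\sqrt{2\ln 2}$-optimal configurations vanishes, so the symmetric $m$-OGP holds with high probability. The parallel first moment calculation for the random $k$-SAT due to Gamarnik and Sudan \cite{gamarnik2017performance}---recorded and sharpened in Theorem~\ref{thm:m-ogp-k-sat} of this paper---gives the symmetric $m$-OGP for $\gamma > 1/\sqrt{m}$ in the regime $k = \Omega(\ln n)$.

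On the \emph{absence} side, I would simply invoke the main technical results of this paper. Theorem~\ref{thm:absent-informal-spin}, formalized as Theorem~\ref{thm:m-ogp-tight}, handles the $p$-spin case for $\gamma < 1/\sqrt{m}$ and large constant $p$; Theorem~\ref{thm:absent-informal-sat}, formalized as Theorem~\ref{thm:m-ogp-k-sat-absent}, handles the $k$-SAT case for $\gamma < 1/\sqrt{m}$ and $k=\Omega(\ln n)$. Pasting the two sides together at the common threshold $\gamma_m = 1/\sqrt{m}$ immediately yields the asserted sharp phase transition for both the Ising $p$-spin glass and the random $k$-SAT models.

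The main obstacle lies entirely on the absence side, which is already discharged by the two theorems just cited and is the genuine novelty of the paper. There a vanilla second moment calculation is insufficient because the count of $m$-tuples of nearly equidistant near-optimal solutions is not adequately concentrated around its expectation under the random disorder (respectively, the random clauses). As outlined in the abstract, the workaround is to combine the counting trick of Frieze~\cite{frieze1990independence} with a concentration estimate for a suitably chosen auxiliary random variable, which upgrades the positive-probability existence of the forbidden $m$-tuple coming from the second moment into a high-probability existence. Once the two absence theorems are available, the corollary stated here follows with no further work.
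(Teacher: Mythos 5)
Your overall strategy---pairing the first-moment presence results with the second-moment absence results and reading off the threshold---is exactly the paper's approach (see Section~\ref{pf:PT}). However, you misidentify the phase transition point for the random $k$-SAT model. You assert that ``for both models the sharp threshold turns out to be $\gamma_m = 1/\sqrt{m}$,'' but that is only correct for the Ising $p$-spin glass. For the random $k$-SAT model the threshold is $\gamma_m = 1/m$: Theorem~\ref{thm:m-ogp-k-sat} establishes the symmetric $m$-OGP for $\gamma > 1/m$, Theorem~\ref{thm:m-ogp-k-sat-absent} establishes its absence for $\gamma < 1/m$, and Theorem~\ref{thm:m-ogp-sharp-PT-sat} records the phase transition at $\gamma_m = 1/m$. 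The two thresholds differ because the overlap appears raised to the $p$-th power in the Gaussian covariance for the $p$-spin Hamiltonian (so the first moment involves a factor $2^{-nm\gamma^2}$), whereas the $k$-SAT first moment carries a factor $\bigl(1 - m 2^{-k}\bigr)^{\gamma 2^k n \ln 2} \approx 2^{-n m \gamma}$ linear in $\gamma$; matching against the counting factor $2^n$ gives critical $\gamma = 1/\sqrt{m}$ and $\gamma = 1/m$, respectively. So your statements ``gives the symmetric $m$-OGP for $\gamma > 1/\sqrt{m}$'' and ``handles the $k$-SAT case for $\gamma < 1/\sqrt{m}$'' are both off by the square root.

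A secondary, smaller imprecision: the presence result for $k$-SAT (Theorem~\ref{thm:m-ogp-k-sat}) holds for all sufficiently large \emph{constant} $k \ge K^*$, not only in the regime $k = \Omega(\ln n)$; only the absence result (Theorem~\ref{thm:m-ogp-k-sat-absent}) requires $k \ge C\ln n$. Since Definition~\ref{def:adm-gamma-sat} phrases admissibility and inadmissibility in the $k\ge C\ln n$ regime, this does not break the pasting argument---the presence result at constant $K^*$ a fortiori covers $k \ge C\ln n$ for large $n$---but it is worth stating the hypotheses accurately.
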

Phase transition points are $\gamma_{\mathrm{p-spin}}=1/\sqrt{m}$ and $\gamma_{\mathrm{k-SAT}}=1/m$, both of which are strictly monotonic in $m$. While the first moment calculations had suggested that the OGP thresholds improve with $m$---as expected, since $m$-OGP structure becomes increasingly nested as $m$ increases---a rigorous confirmation had been lacking. Theorem~\ref{thm:PTT} establishes this structural fact for the models we study, thereby justifying the aforementioned common practice of increasing $m$ when applying OGP-based methods.
\begin{remark}\label{remark:Large-p-k}
Notice that Theorems~\ref{thm:absent-informal-spin}-\ref{thm:PTT} hold for large $p$ and $k$. This mirrors a rich body of work in random structures---e.g., random $k$-SAT, spin glasses, and random graphs---where rigorous results for fixed parameters are notoriously challenging and foundational progress has largely occurred in the large parameter regime. For random $k$-SAT, both the satisfiability threshold~\cite{ding2015proof} and algorithmic lower bounds~\cite{bresler2021algorithmic} are known only for sufficiently large $k$. Earlier developments~\cite{frieze2005random,coja2008random,liu2012note} focused on the same regime $k=\Omega(\ln n)$ considered here. For Ising $p$-spin glasses, rigorous results for large $p$~\cite{talagrand2000rigorous} preceded technically far more delicate breakthroughs for fixed $p$~\cite{talagrand2006parisi,panchenko2013parisi}. For sparse random graphs, most rigorous results pertain to large average degree $d$~\cite{frieze1990independence,bayati2010combinatorial,wein2020optimal}. The large $p$ regime has also proven useful in high-dimensional statistics; for instance, a recent work rigorously analyzes the large average subtensor problem in a challenging regime for $p$ large~\cite{kizildag2025tensor}.
 Our work, which initiates a new direction into the absence of OGP, also focuses on large parameter regime; extending beyond this, particularly to fixed $p$ or $k$, is a central open challenge for future work. 
\end{remark}
\paragraph{Shattering in Spin Glasses} The celebrated work by Kirkpatrick and Thirumalai~\cite{kirkpatrick1987p} predicted a shattering phase in the Gibbs measure of the Ising $p$-spin glass, in which exponentially many well-separated clusters, each with exponentially small mass, collectively contain almost all of the Gibbs mass. Subsequent work~\cite{montanari2003nature,ferrari2012two} refined this conjecture, postulating shattering over a range of inverse temperatures $\beta\in (\beta_d(p),\beta_c(p))$, where $\beta_d(p) = (1+o_p(1))\sqrt{(2\ln p)/p}$ as shown in~\cite{ferrari2012two,alaoui2023sampling} and  $\beta_c(p)=(1-o_p(1))\sqrt{2\ln 2}$ as established in~\cite{talagrand2000rigorous}, both asymptotically as $p\to\infty$. Shattering at zero temperature (i.e., shattering of the solution space)
has played a key role since then in the study of algorithmic hardness in random CSPs~\cite{achlioptas2006solution,achlioptas2008algorithmic,achlioptas2011solution}. 

Despite major advances in spin glass theory, shattering phenomena have only recently begun to be rigorously understood. \cite{gamarnik2023shattering} established shattering in the Ising $p$-spin glass model for $\beta\in(\sqrt{\ln 2},\sqrt{2\ln 2})$ and all sufficiently large $p$. Their argument, based on the pairwise OGP (2-OGP), falls short of establishing shattering when $\beta_d(p)<\beta<\sqrt{\ln 2}$. In light of Theorems~\ref{thm:absent-informal-spin} and~\ref{thm:m-ogp-tight}, the 2-OGP is absent when $\beta<\sqrt{\ln 2}$, which accounts for the fact that the techniques of~\cite{gamarnik2023shattering} apply only to the range $\sqrt{\ln 2}<\beta<\sqrt{2\ln 2}$. Very recently, El Alaoui~\cite{alaoui2024near} introduced a `soft' version of the 2-OGP, which led to a near-optimal shattering result extending down to the conjectured $\beta_d(p)$ threshold. Our results confirm that the bounds in~\cite{gamarnik2023shattering} are tight. This highlights the necessity of the soft-OGP approach developed in~\cite{alaoui2024near} to obtain  optimal shattering results.

\vspace{0.05in}
\noindent{\bf Notation} 
For any event $E$, $\ind\{E\}$ denotes its indicator. For $r\in\mathbb{R}$, $\exp(r):=e^r$. 
For $n\in\N$, $\Sigma_n:= \{-1,1\}^n$. For any $\bs,\bs'\in\Sigma_n$ or $\bs,\bs'\in\{0,1\}^n$, $d_H(\bs,\bs'):=\sum_{1\le i\le n}\ind\{\bs(i)\ne \bs'(i)\}$. For $\bs,\bs'\in\Sigma_n$, $\ip{\bs}{\bs'}:= \sum_{1\le i\le n}\bs(i)\bs'(i)=n-2d_H(\bs,\bs')$. 
We employ standard asymptotic notation: $\Theta(\cdot),O(\cdot),o(\cdot),\Omega(\cdot)$, and $\omega(\cdot)$; the underlying asymptotics is often w.r.t.\,$n\to\infty$. 
Asymptotics other than $n\to\infty$ are reflected with a subscript, e.g.\,$\Omega_k(\cdot)$. 
\section{Sharp Phase Transition for the $m$-OGP: Ising $p$-Spin Glass Model}\label{sec:SPIN}
In this section, we establish a sharp phase transition for the multi Overlap Gap Property ($m$-OGP) in the Ising $p$-spin model. We first formalize the set of $m$-tuples under consideration. 
\begin{definition}\label{def:m-ogp}
    Let $m\in\N$, $0<\gamma<1$, and $0<\eta<\xi<1$. Denote by $\mathcal{S}_{\mathrm{p-spin}}(\gamma,m,\xi,\eta)$ the set of all $m$-tuples $\bs^{(t)}\in\Sigma_n:=\{-1,1\}^n$, $1\le t\le m$, that satisfy the following:
    \begin{itemize}
        \item {\textbf{$\gamma$-Optimality:}} For any $1\le t\le m$, $H(\bs^{(t)})\ge \gamma\sqrt{2\ln 2}$.
        \item {\textbf{Overlap Constraint:}} For any $1\le t<\ell \le m$, $n^{-1}\ip{\bs^{(t)}}{\bs^{(\ell)}}\in[\xi-\eta,\xi]$.
    \end{itemize}
\end{definition}
Definition~\ref{def:m-ogp} regards $m$-tuples of near-optimal solutions whose (pairwise) overlaps are constrained; parameter $\gamma$ quantifies the near-optimality and $\xi,\eta$ control the region of overlaps. The model exhibits symmetric $m$-OGP at threshold $\gamma\sqrt{2\ln 2}$ if $\mathcal{S}_{\mathrm{p-spin}}$ is w.h.p.\,empty for suitable $m,\xi$ and $\eta$.

Gamarnik, Jagannath, and K{\i}z{\i}lda\u{g} establish the following result.
\begin{proposition}{\cite[Theorem~2.11]{gamarnik2023shattering}}\label{thm:m-ogp}
    For any $m\in\N$ and any $\gamma>1/\sqrt{m}$, there exists $0<\eta<\xi<1$ and a $P^*\in\N$ such that the following holds. Fix any $p\ge P^*$. Then,  as $n\to\infty$
    \[ \mathbb{P}\bigl[\mathcal{S}_{\mathrm{p-spin}}(\gamma,m,\xi,\eta)\ne\varnothing\bigr]\le e^{-\Theta(n)}.
    \]
\end{proposition}
That is, for any $m\in\N$ and $\gamma>1/\sqrt{m}$, Ising pure $p$-spin model exhibits the symmetric $m$-OGP above $\gamma\sqrt{2\ln 2}$ for all large enough $p$.\footnote{We emphasize that \cite[Theorem~2.11]{gamarnik2023shattering} is, in fact, stronger. It establishes the \emph{ensemble $m$-OGP}, concerning tuples of near-optima w.r.t.\,correlated Hamiltonians, which is necessary for proving algorithmic hardness.} 

Proposition~\ref{thm:m-ogp} focuses on $\gamma>1\sqrt{m}$. Our first main result addresses the regime
$\gamma<1/\sqrt{m}$.
\begin{theorem}\label{thm:m-ogp-tight}
    For any $m\in \N$, $\gamma<1/\sqrt{m}$, and any $0<\eta<\xi<1$, there exists a $P^*\in \N$ such that the following holds. Fix any $p\ge P^*$. Then, as $n\to\infty$
    \[
\mathbb{P}\bigl[\mathcal{S}_{\mathrm{p-spin}}(\gamma,m,\xi,\eta)\ne\varnothing\bigr]\ge 1-e^{-\Theta(n)}. 
    \]
\end{theorem}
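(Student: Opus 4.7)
The strategy, laid out in the abstract, is to combine a second-moment estimate with Gaussian concentration following the trick of Frieze~\cite{frieze1990independence}. The central object is the Lipschitz functional
\[
M(\boldsymbol{J})\triangleq\max_{(\bs^{(1)},\ldots,\bs^{(m)})\in T}\,\min_{1\le t\le m}H_{n,p}(\bs^{(t)}),
\]
where $T\subseteq\Sigma_n^m$ collects all $m$-tuples whose pairwise overlaps lie in $[\xi-\eta,\xi]$. Since $S(\gamma,m,\xi,\eta)\ne\varnothing$ iff $M(\boldsymbol{J})\ge\gamma\sqrt{2\ln 2}$, the theorem reduces to a one-sided tail estimate on $M$.

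The first step is to fix an intermediate level $\gamma'\in(\gamma,1/\sqrt{m})$ and apply the second moment method to $N=|S(\gamma',m,\xi,\eta)|$. The vector $(H_{n,p}(\bs^{(t)}))_{t=1}^m$ is centered Gaussian with covariances $n^{-1}(n^{-1}\ip{\bs^{(t)}}{\bs^{(\ell)}})^p$, whose off-diagonal entries are $O(\xi^p)$ relative to the diagonal; for large $p$ the joint upper tail above $\gamma'\sqrt{2\ln 2}$ is therefore well-approximated by $\exp(-m\gamma'^2 n\ln 2+o(n))$. Combined with the large-deviations count $|T|=\exp(n(m\ln 2-\Phi(m,\xi))+o(n))$ and the bound $\Phi(m,\xi)<(m-1)\ln 2$ valid for $\xi<1$, the condition $\gamma'<1/\sqrt{m}$ forces $\E{N}\to\infty$ exponentially. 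The analogous computation of $\E{N^2}$ sums over joint overlap patterns for pairs of $m$-tuples from $T$; for $p$ large the cross-covariances $(q/n)^p$ are negligible whenever $|q|$ is bounded away from $1$, decoupling the two blocks of the $2m\times 2m$ covariance matrix, while configurations with some pair of spin vectors across the two tuples nearly aligned are combinatorially sparse. Summing, one obtains $\E{N^2}\le e^{c_p n}\E{N}^2$ with $c_p=c_p(m,\gamma',\xi,\eta)\to 0$ as $p\to\infty$, so Paley--Zygmund yields
\[
\Pr[M\ge\gamma'\sqrt{2\ln 2}]\ge\Pr[N\ge 1]\ge e^{-c_p n},
\]
which is exponentially small but with a vanishing exponent.

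The Frieze boost exploits the fact that $\boldsymbol{J}\mapsto M(\boldsymbol{J})$ is $n^{-1/2}$-Lipschitz in Frobenius norm: since $H_{n,p}(\bs)=n^{-(p+1)/2}\ip{\boldsymbol{J}}{\bs^{\otimes p}}$ and $\|\bs^{\otimes p}\|_F=n^{p/2}$, each slice has Lipschitz constant $n^{-1/2}$, and $\min$, $\max$ are $1$-Lipschitz. The Borell--TIS inequality then yields $\Pr[|M-\mathrm{med}(M)|\ge t]\le 2\exp(-nt^2/2)$. Inverting this one-sided concentration against the lower bound above gives $\mathrm{med}(M)\ge\gamma'\sqrt{2\ln 2}-\sqrt{2c_p}$; applying concentration again in the downward direction with $t=\tfrac12(\gamma'-\gamma)\sqrt{2\ln 2}$ gives $\Pr[M\ge\mathrm{med}(M)-t]\ge 1-e^{-nt^2/2}$. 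Choosing $P^*$ large enough that $\sqrt{2c_p}+t\le(\gamma'-\gamma)\sqrt{2\ln 2}$, these two estimates combine to give $\Pr[M\ge\gamma\sqrt{2\ln 2}]\ge 1-e^{-\Theta(n)}$, which is precisely the claim.

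The main obstacle is the second-moment estimate: one has to show that the exponent $c_p$ in $\E{N^2}\le e^{c_p n}\E{N}^2$ can be made arbitrarily small by taking $p$ large, which is exactly why the ``direct'' second moment only yields $\Pr[N\ge 1]\ge e^{-\Theta(n)}$ and Frieze's concentration boost becomes indispensable. The difficulty lies in contributions from cross-overlap patterns with some $|q|$ close to $1$, where the REM-style suppression $(q/n)^p\to 0$ does not help directly; one must instead argue that the combinatorial count of $m$-tuple pairs with some near-aligned spin vectors is too small to dominate the diagonal. Quantifying this uniformly in the joint overlap matrix is the bulk of the technical work, and is what the restriction $p\ge P^*=P^*(m,\gamma,\xi,\eta)$ is designed to handle.
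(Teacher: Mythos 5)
Your proposal follows the same strategy as the paper's proof: define the Lipschitz max--min functional $T_{m,\xi,\eta}$ (your $M(\boldsymbol{J})$), apply the second moment method at an intermediate level $\gamma'\in(\gamma,1/\sqrt{m})$ with the near-aligned/well-separated cross-overlap decomposition, obtain $\Pr[N\ge 1]\ge e^{-c_p n}$ with $c_p\to 0$ as $p\to\infty$ via Paley--Zygmund, and then use Gaussian concentration (Borell--TIS for you, the equivalent Lipschitz-concentration inequality in the paper) in the Frieze manner to convert this weak lower bound into a lower bound on the median (paper: the mean) of $M$, and finally into a $1-e^{-\Theta(n)}$ statement. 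The only cosmetic difference is your use of the median where the paper uses the mean, which is immaterial for Lipschitz Gaussian functionals.
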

The proof of Theorem~\ref{thm:m-ogp-tight} relies on multiple techniques, which we overview in  Section~\ref{sec:pf_technique}. See Section~\ref{sec:pf-m-ogp-tight} for the full proof.

Taken together, Proposition~\ref{thm:m-ogp} and Theorem~\ref{thm:m-ogp-tight} collectively imply that for sufficiently large $p$, the value $\sqrt{(2\ln 2)/m}$ is tight: for any $m\in\N$, and sufficiently large $p$, the onset of $m$-OGP is $\sqrt{(2\ln 2)/m}$. We 
make this more precise in Theorem~\ref{thm:m-ogp-sharp-PT}, establishing a sharp phase transition.

We emphasize that Theorem~\ref{thm:m-ogp-tight} regards a single Hamiltonian, rather than a correlated ensemble. Crucially, it holds for \emph{any} $\xi$ and $\eta$, strengthening the ensuing results: the phase transition for the $m$-OGP is independent of $\xi,\eta$ or the ensemble, making it inherently definition-robust.
\paragraph{Sharp Phase Transition} We define the notion of \emph{admissible} values of $\gamma$.
\begin{definition}\label{def:adm-exp}
    Fix an $m\in\N$. A value $\gamma>0$ is called $m$-admissible if for any $0<\eta<\xi<1$ there exists a $P^*(m,\gamma,\eta,\xi)\in\N$ such that for every fixed $p\ge P^*(m,\gamma,\eta,\xi)$,
    \[
\lim_{n\to\infty}\mathbb{P}\bigl[\mathcal{S}_{\mathrm{p-spin}}(\gamma,m,\xi,\eta)\ne\varnothing\bigr] =1.
    \]
    Similarly, $\gamma$ is called $m$-inadmissible if there exists $0<\eta<\xi<1$, $\eta$ sufficiently small, and a $\hat{P}\in \N$ such that for every fixed $p\ge \hat{P}$,
    \[
\lim_{n\to\infty}\mathbb{P}\bigl[\mathcal{S}_{\mathrm{p-spin}}(\gamma,m,\xi,\eta)\ne\varnothing\bigr] =0.
    \]
\end{definition}
The sharp phase transition we investigate is formalized as follows.
\begin{definition}\label{def:sharp-PT}
    Fix $m\in\N$. The $m$-OGP exhibits a sharp phase transition at the threshold $\gamma(m)$ if
    \[
    \sup\{\gamma>0:\gamma\text{ is $m$-admissible}\} = \gamma(m)=\inf\{\gamma>0:\gamma\text{ is $m$-inadmissible}\}.
    \]
\end{definition}
As noted above, the phase transition is definition-robust, i.e., it remains independent of $\xi,\eta$ or the ensemble. We establish that: \begin{theorem}\label{thm:m-ogp-sharp-PT}
    For any $m\in\N$, the $m$-OGP for the Ising $p$-spin glass model exhibits a sharp phase transition in the sense of Definition~\ref{def:sharp-PT} at $\gamma_{\mathrm{p-spin}}(m)=1/\sqrt{m}$.
\end{theorem}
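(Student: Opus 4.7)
The plan is to derive Theorem~\ref{thm:m-ogp-sharp-PT} directly from Theorems~\ref{thm:m-ogp} and~\ref{thm:m-ogp-tight} together with an elementary mutual-exclusivity observation. All the heavy lifting (the first-moment upper bound of~\cite{gamarnik2023shattering} and the second-moment/Frieze-type lower bound that Theorem~\ref{thm:m-ogp-tight} will rest on) is already packaged in those two results, so what remains is a short bookkeeping argument that matches them against Definitions~\ref{def:adm-exp} and~\ref{def:sharp-PT}.

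First I would establish that every $\gamma < 1/\sqrt{m}$ is $m$-admissible. This is immediate from Theorem~\ref{thm:m-ogp-tight}: given any $0<\eta<\xi<1$, the theorem furnishes a $P^*(m,\gamma,\eta,\xi)$ beyond which $\mathbb{P}[S(\gamma,m,\xi,\eta)\neq\varnothing]\ge 1-e^{-\Theta(n)}\to 1$, which is exactly the admissibility clause of Definition~\ref{def:adm-exp}. Hence $\sup\{\gamma>0:\gamma\text{ is }m\text{-admissible}\}\ge 1/\sqrt{m}$.

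Dually, I would show that every $\gamma > 1/\sqrt{m}$ is $m$-inadmissible using Theorem~\ref{thm:m-ogp}, which produces some $0<\eta<\xi<1$ and $\hat P$ for which $\mathbb{P}[S(\gamma,m,\xi,\eta)\neq\varnothing]\le e^{-\Theta(n)}\to 0$ as soon as $p\ge \hat P$. The only mild nuisance is the ``$\eta$ sufficiently small'' phrasing of Definition~\ref{def:adm-exp}; to handle it I would invoke the trivial monotonicity $S(\gamma,m,\xi,\eta')\subseteq S(\gamma,m,\xi,\eta)$ whenever $\eta'\le\eta$, since shrinking the overlap window only removes tuples. This transfers the vanishing upper bound to every smaller $\eta'$, in particular to arbitrarily small ones, so the definition is satisfied. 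Consequently $\inf\{\gamma>0:\gamma\text{ is }m\text{-inadmissible}\}\le 1/\sqrt{m}$.

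Finally, a mutual-exclusivity argument closes the sandwich: if some $\gamma$ were simultaneously $m$-admissible and $m$-inadmissible, one could select the particular $(\eta,\xi)$ witnessing inadmissibility and choose $p$ above both $P^*(m,\gamma,\eta,\xi)$ and $\hat P$, thereby forcing $\mathbb{P}[S(\gamma,m,\xi,\eta)\neq\varnothing]$ to tend simultaneously to $1$ and to $0$, a contradiction. This yields $\sup_{\text{adm}}\le \inf_{\text{inadm}}$, and combining with the two bounds above pins both quantities to $1/\sqrt{m}$, which is exactly Definition~\ref{def:sharp-PT}. I do not anticipate any real obstacle here; the only point genuinely requiring care is the monotonicity-in-$\eta$ check needed to align Theorem~\ref{thm:m-ogp} with the ``sufficiently small $\eta$'' clause, and that reduces to a one-line containment.
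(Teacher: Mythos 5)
Your proposal is correct and follows essentially the same route as the paper's Section~\ref{pf:PT}: deduce $m$-admissibility below $1/\sqrt{m}$ from Theorem~\ref{thm:m-ogp-tight}, $m$-inadmissibility above $1/\sqrt{m}$ from Theorem~\ref{thm:m-ogp}, and then read off the supremum and infimum in Definition~\ref{def:sharp-PT}. Your explicit handling of the ``$\eta$ sufficiently small'' clause via the containment $S(\gamma,m,\xi,\eta')\subseteq S(\gamma,m,\xi,\eta)$ and the mutual-exclusivity check are both correct and merely spell out steps the paper leaves implicit.
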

Theorem~\ref{thm:m-ogp-sharp-PT} follows directly from Proposition~\ref{thm:m-ogp} and Theorem~\ref{thm:m-ogp-tight}; the proof is omitted.  We highlight that the phase transition point $\gamma_{\mathrm{p-spin}}(m)$ is strictly monotonic in $m$.

\subsection{{Proof Overview for Theorem~\ref{thm:m-ogp-tight}}}\label{sec:pf_technique}
Given $m\in\mathbb{N}$ and $0<\eta<\xi<1$, denote by $\mathcal{F}(m,\xi,\eta)$ the set of all $(\bs^{(1)},\dots,\bs^{(m)})$, satisfying the overlap constraints, $n^{-1}\langle \bs^{(i)},\bs^{(j)}\rangle \in[\xi-\eta,\xi]$ for all $i\ne j$. To begin with, we must show that $\mathcal{F}(m,\xi,\eta)$ is non-empty; otherwise the set $\mathcal{S}_{\mathrm{p-spin}}$ in Definition~\ref{def:m-ogp} would be trivially empty. We establish this using \emph{probabilistic method}~\cite{alon2016probabilistic}. Specifically, we generate $\bs^{(1)},\dots,\bs^{(m)}$ by independently sampling each coordinate $\bs^{(i)}(k)\in\{-1,1\}$ with an appropriate mean. Applying large deviations inequalities, we show that the resulting tuple satisfies the overlap constraints with positive probability, thereby ensuring $\mathcal{F}(m,\xi,\eta)\ne\varnothing$. 

The next step is applying the second moment method. Suppose that $M$ is a non-negative integer-valued random variable with finite variance. Then, $\mathbb{P}[M\ge 1]\ge \mathbb{E}[M]^2/\mathbb{E}[M^2]$ by the Paley-Zygmund Inequality. We fix $\gamma<1/\sqrt{m}$ and apply this to $M:=|\mathcal{S}_{\mathrm{p-spin}}(\gamma,m,\xi,\eta)|$ in Proposition~\ref{prop:2nd-moment}. To estimate $\mathbb{E}[M^2]$, we analyze a summation over \emph{all pairs} of $m$-tuples in $\mathcal{F}(m,\xi,\eta)$ satisfying Definition~\ref{def:m-ogp}. Building on an overcounting argument, we show that for large $p$, these pairs behave approximately independently unless their corresponding coordinates exhibit extreme correlation. At the same time, enforcing a high degree of correlation incurs a steep entropic cost. This allows us to decompose $\mathbb{E}[M^2]$ into two parts, each of which can be controlled separately. This leads to 
\begin{equation}\label{eq:2NDMOM}
    \mathbb{P}\bigl[\bigl|\mathcal{S}_{\mathrm{p-spin}}(\gamma,m,\xi,\eta)\bigr|\ge 1\bigr]\ge \exp\bigl(-no_p(1)\bigr)
\end{equation}
where $o_p(1)\to 0$ as $p\to\infty$. Thus, the second moment method alone falls short of establishing the desired result that $\mathcal{S}_{\mathrm{p-spin}}\ne\varnothing$ w.p.\,$1-o(1)$. While this issue could be resolved for large $p$ 
if $|\mathcal{S}_{\mathrm{p-spin}}|$ were concentrated around its mean, such a concentration property is unclear. Instead, we proceed  by defining a suitable proxy random variable that is closely related to $|\mathcal{S}_{\mathrm{p-spin}}|$ and more tractable:
\[
T_{m,\xi,\eta}:=\max_{(\bs^{(1)},\dots,\bs^{(m)})\in \mathcal{F}(m,\xi,\eta)} \min_{1\le j\le m}H(\bs^{(j)}).
\]
In Lemma~\ref{lemma:T-concen}, we prove that $T_{m,\xi,\eta}$, viewed as a function of the tensor $\boldsymbol{J}$ with i.i.d.\,$\cN(0,1)$ entries (cf.~\eqref{eq:Hamiltonian-p-spin}), satisfies a Lipschitz property. Consequently, standard concentration results for Lipschitz functions of normal random variables~\cite[Theorem~2.26]{wainwright2019high} yield that for all $\epsilon\ge 0$:
\begin{equation}\label{eq:T-CON}
    \mathbb{P}\bigl[\bigl|T_{m,\xi,\eta}-\mathbb{E}[T_{m,\xi,\eta}]\bigr|\ge \epsilon\bigr]\le 2\exp\bigl(-n\epsilon^2/2\bigr).
\end{equation}
\paragraph{Second Moment Bound for Large $p$} We now leverage an elegant argument of Frieze~\cite{frieze1990independence} to combine the second moment bound with concentration. The key observation is that:
\begin{equation}\label{eq:CRUCIALE}
 |\mathcal{S}_{\mathrm{p-spin}}(\gamma,m,\xi,\eta)|\ge 1 \iff T_{m,\xi,\eta}\ge \gamma \sqrt{2\ln 2},
 \end{equation}
 for any $\gamma>0$. Fixing $\gamma'\in(\gamma,1/\sqrt{m})$, we obtain that for all sufficiently large $p$ and $n$,
\[
\mathbb{P}[T_{m,\xi,\eta}\ge\gamma'\sqrt{2\ln 2}]\ge \exp\bigl(-no_p(1)\bigr)\ge 2\exp(-n\epsilon^2/2)\ge
\mathbb{P}\bigl[T_{m,\xi,\eta}\ge \mathbb{E}[T_{m,\xi,\eta}]+\epsilon\bigr]
\]
using~\eqref{eq:2NDMOM},~\eqref{eq:T-CON}, and~\eqref{eq:CRUCIALE}. Consequently, $\mathbb{E}[T_{m,\xi,\eta}]\ge \gamma'\sqrt{2\ln 2}-\epsilon$. Applying~\eqref{eq:T-CON} once more, we obtain w.h.p.\,$T_{m,\xi,\eta}\ge \mathbb{E}[T_{m,\xi,\eta}]-\epsilon\ge \gamma'\sqrt{2\ln 2}-2\epsilon$, which at least $\gamma\sqrt{2\ln 2}$ provided $\epsilon>0$ is sufficiently small.
This bound, together with~\eqref{eq:CRUCIALE}, establishes Theorem~\ref{thm:m-ogp-tight}.
\section{Sharp Phase Transition for the $m$-OGP: Random $k$-SAT Model}\label{sec:SAT}
We now turn to the random $k$-SAT. The set of $m$-tuples we investigate is as follows.
\begin{definition}\label{def:m-ogp-k-sat}
    Let $k\in \N$, $\gamma\in (0,1)$, $m\in \N$, and $0<\eta<\xi<1$. Denote by $\mathcal{S}_{{\mathrm{k-SAT}}}(\gamma,m,\xi,\eta)$ the set of all $m$-tuples $\bs^{(t)}\in\{0,1\}^n$, $1\le t\le m$, that satisfy the following.
    \begin{itemize}
        \item {\textbf{Satisfiability:}} For any $1\le t\le m$, $\Phi(\bs^{(t)})=1$, where $\Phi(\cdot)$ is a conjunction of $n\alpha_\gamma$ independent $k$-clauses and $\alpha_\gamma = \gamma 2^k\ln 2$.
        \item {\textbf{Pairwise  Distance:}} For any $1\le t<\ell \le m$, $n^{-1}d_H(\bs^{(t)},\bs^{(\ell)})\in[\xi-\eta,\xi]$.
    \end{itemize}
\end{definition}
Definition~\ref{def:m-ogp-k-sat} regards $m$-tuples of satisfying solutions whose pairwise distances are constrained. The parameter $\alpha_\gamma$ is the clause density: the formula $\Phi$ consists of $n\alpha_\gamma$  independent $k$-clauses. Parameters $\xi$ and $\eta$ collectively control the region of overlaps. Similar to above, the model exhibits $m$-OGP above densities $\alpha_\gamma$ if $\mathcal{S}_{\mathrm{k-SAT}}$ w.h.p.\,empty for suitable $m,\xi$ and $\eta$.

Gamarnik and Sudan~\cite{gamarnik2017performance} established the symmetric $m$-OGP for a variant of random $k$-SAT known as NAE-$k$-SAT. We first extend their result to random $k$-SAT model.
\begin{theorem}\label{thm:m-ogp-k-sat}
    For any $m\in\N$ and any $\gamma>1/m$, there exists $0<\eta<\xi\le\frac12$ and $K^*:= K^*(\gamma,m,\xi,\eta)\in \N$ such that the following holds. Fix any $k\ge K^*$. Then, as $n\to\infty$
    \[
    \mathbb{P}\bigl[\mathcal{S}_{{\mathrm{k-SAT}}}(\gamma,m,\xi,\eta)\ne\varnothing\bigr]\le e^{-\Theta(n)}.
    \]
\end{theorem}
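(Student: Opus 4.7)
\textit{Proof plan.} The plan is a first moment argument with a careful balance between the count of admissible $m$-tuples and the single-clause satisfaction probability. Since the binary entropy satisfies $h(\beta)\to 0$ as $\beta\to 0^+$ and $\gamma>1/m$, we first fix $\beta\in(0,1/2]$ small enough that $1+(m-1)h(\beta)<m\gamma$, then fix $\eta\in(0,\beta)$ and $\epsilon>0$ small enough that $1+(m-1)h(\beta)<(m-\epsilon)\gamma$, and finally pick $K^*=K^*(\gamma,m,\beta,\eta)$ large enough to swallow the $o_k(1)$ estimates appearing below; throughout we take $k\ge K^*$.

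Fix an $m$-tuple $(\bs^{(1)},\ldots,\bs^{(m)})\in(\{0,1\}^n)^m$ with all pairwise Hamming distances in $[(\beta-\eta)n,\beta n]$. Let $A_t$ be the event that a single random $k$-clause falsifies $\bs^{(t)}$, and for $S\subseteq[m]$ set $q_S\triangleq\mathbb{P}[\text{a uniform random literal falsifies every }\bs^{(t)},\,t\in S]$. Since the $k$ literals are i.i.d., $\mathbb{P}[\cap_{t\in S}A_t]=q_S^k$. A direct computation yields $q_{\{t\}}=1/2$, while $q_{\{t,\ell\}}=\tfrac{1}{2}(1-d_H(\bs^{(t)},\bs^{(\ell)})/n)\le\tfrac{1}{2}(1-\beta+\eta)$, and more generally $q_S\le q_{\{t,\ell\}}$ for any $|S|\ge 2$ and $t,\ell\in S$ (the literal must fix a coordinate where all of $S$ agree and then choose the opposite sign). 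Applying Bonferroni,
\[
p\triangleq\mathbb{P}\Bigl[\bigcap_{t=1}^m A_t^c\Bigr]\le 1-m\cdot 2^{-k}+\binom{m}{2}\Bigl(\tfrac{1-\beta+\eta}{2}\Bigr)^k.
\]
Since $1-\beta+\eta<1$, the correction equals $2^{-k}\cdot o_k(1)$, so by enlarging $K^*$ we secure $p\le 1-(m-\epsilon)2^{-k}$. Independence of the $n\alpha_\gamma=n\gamma 2^k\ln 2$ clauses then gives
\[
\mathbb{P}[\text{all }m\text{ assignments satisfy }\Phi]\le p^{n\alpha_\gamma}\le\exp\bigl(-(m-\epsilon)\,n\,\gamma\,\ln 2\bigr)=2^{-(m-\epsilon)\gamma n}.
\]

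A standard entropy count bounds the number of valid $m$-tuples by $2^n\cdot(n\cdot 2^{nh(\beta)})^{m-1}=2^{n(1+(m-1)h(\beta))+O(\log n)}$: place $\bs^{(1)}$ freely and each subsequent $\bs^{(t)}$ inside the Hamming annulus of thickness $\eta n$ around $\bs^{(1)}$. Linearity of expectation combined with the previous display yields
\[
\mathbb{E}\bigl[|\mathcal{S}(\gamma,m,\beta,\eta)|\bigr]\le 2^{n(1+(m-1)h(\beta)-(m-\epsilon)\gamma)+o(n)}=e^{-\Theta(n)}
\]
by the choice of $\beta,\eta,\epsilon$, and Markov's inequality concludes the proof.

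The decisive technical step is the bound on $p$: the trivial estimate $p\le 1-2^{-k}$ (using only that $\bs^{(1)}$ is satisfied) would yield at best the threshold $\gamma>1+(m-1)h(\beta)$, which cannot be made $\le 1/m$ for any legal $\beta$. The improvement to the factor $m$ in the exponent $2^{-m\gamma n}$ relies on near-independence of the $m$ falsification events when pairwise distances are bounded away from $0$, captured by $q_{\{t,\ell\}}^k\ll q_{\{t\}}^k$. This is exactly why $\beta$ must be kept bounded away from $0$ (so that the Bonferroni correction is negligible) yet kept small (so that $(m-1)h(\beta)$ does not overwhelm $m\gamma-1$); the two constraints meet precisely at the threshold $\gamma=1/m$.
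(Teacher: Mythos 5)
Your proof is correct and follows essentially the same first-moment strategy as the paper: the same entropy bound on the number of admissible $m$-tuples, the same Bonferroni/inclusion--exclusion estimate giving the single-clause probability $1-m\cdot 2^{-k}+\binom{m}{2}\left(\tfrac{1-\beta+\eta}{2}\right)^{k}$, and the same choice of $\beta$ small so that $(m-1)h(\beta)$ is swallowed by the slack $m\gamma-1>0$ before Markov's inequality is invoked. The only cosmetic difference is that you use $1-x\le e^{-x}$ in place of the paper's Taylor expansion of $\ln(1-x)$, which is a minor simplification.
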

Theorem~\ref{thm:m-ogp-k-sat} is shown via the first moment method, see Section~\ref{sec:pf-ogp-k-sat} for the proof. For any $m\in\N$ and any $\gamma>1/m$, the random $k$-SAT model exhibits $m$-OGP at clause density $\gamma 2^k\ln 2$. 

Our next result addresses the regime $\gamma<1/m$.
\begin{theorem}\label{thm:m-ogp-k-sat-absent}
    For any $m\in \N$, $\gamma<1/m$ and $0<\eta<\xi\le \frac12$, there exists a constant $C(\gamma,m,\xi,\eta)>0$ such that the following holds. Fix any $k\ge C(\gamma,m,\xi,\eta)\ln n$. Then as $n\to\infty$,
    \[
    \mathbb{P}\bigl[\mathcal{S}_{\mathrm{k-SAT}}(\gamma,m,\xi,\eta)\ne\varnothing\bigr]\ge 1-e^{-\Theta(n)}.
    \]
\end{theorem}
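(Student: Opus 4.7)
The plan is to apply the second moment method to $N := |\mathcal{S}(\gamma,m,\beta,\eta)|$ to get the weak lower bound $\mathbb{P}[N>0]\ge 2^{-\delta n}$, and then boost this to $1-e^{-\Theta(n)}$ via concentration of $\log(1+N)$ in the spirit of Frieze~\cite{frieze1990independence}. I would first carry out the first and second moment calculations. The number of $m$-tuples in $(\{0,1\}^n)^m$ with pairwise Hamming distances in $[\beta-\eta,\beta]n$ is $2^{n(1+(m-1)h(\beta)+o_\eta(1))}$ by standard volume estimates. For a fixed such tuple, inclusion--exclusion over the $m$ assignments shows that the probability they all simultaneously satisfy one uniformly random $k$-clause equals $1 - m 2^{-k} + R_k$, with $R_k = O((1-\beta)^k 2^{-k})$; since $\beta\le 1/2$ and $k\ge C(\gamma,m,\beta,\eta)\ln n$, the cumulative correction $M\cdot R_k$ is $o(n)$, so raising to the power $M=n\alpha_\gamma=n\gamma 2^k\ln 2$ produces a per-tuple satisfaction probability of $2^{-nm\gamma(1+o(1))}$, and thus
\[
\mathbb{E}[N] \;=\; 2^{\,n(1 + (m-1)h(\beta) - m\gamma + o(1))},
\]
which is exponentially large provided $\gamma<1/m$. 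For $\mathbb{E}[N^2]$ I would sum over ordered pairs $(\tau_1,\tau_2)$ of valid $m$-tuples, classified by their joint $(2m)\times(2m)$ overlap matrix: the generic contribution reproduces $(1+o(1))\mathbb{E}[N]^2$, but atypical, highly correlated pairs can inflate the sum by a factor $2^{\delta n}$ for some $\delta=\delta(\gamma,m,\beta,\eta)>0$ --- which is precisely where the raw second moment fails --- so that Paley--Zygmund yields only $\mathbb{P}[N>0]\ge \mathbb{E}[N]^2/\mathbb{E}[N^2]\ge 2^{-\delta n}/4$.

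To boost this weak bound, I would follow Frieze's strategy and consider $g(\Phi) := \log_2(1+N(\Phi))$ as a function of the $M$ i.i.d.\ random clauses $C_1,\dots,C_M$. On a high-probability event $E$ on which every clause eliminates at most an $O(m/2^k)$ fraction of surviving $m$-tuples, changing a single clause shifts $g$ by at most $O(m/2^k)$, so applying McDiarmid's bounded differences inequality to a truncation tied to $E$ yields
\[
\mathbb{P}\!\left[\,|g-\mathbb{E}[g]|>t\,\right]\;\le\;\exp\!\left(-\Omega\!\left(\frac{t^2\,2^k}{n\,m^2}\right)\right) + \mathbb{P}[E^c].
\]
Combining this with Paley--Zygmund, which in view of $\log_2(\mathbb{E}[N]/2)=\Theta(n)$ states $\mathbb{P}[g\ge\Theta(n)]\ge 2^{-\delta n}/4$, and choosing $C(\gamma,m,\beta,\eta)$ large enough that $2^k$ dominates $n$ polynomially, makes the concentration bound strictly sharper than $2^{-\delta n}$. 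This forces $\mathbb{E}[g]=\Theta(n)$, and a further invocation of the same concentration inequality then yields $g\ge 1$ --- equivalently $N\ge 1$ --- with probability $1-e^{-\Theta(n)}$.

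The principal obstacle is the concentration step: in the worst case, a single clause can annihilate $N$, so $g$ is not uniformly Lipschitz in the clauses, and McDiarmid's inequality must be applied to a truncation on a typical event. The core of the work is thus verifying that $E$ holds with probability $1-e^{-\Theta(n)}$, via a Chernoff-type bound on the per-clause killing fraction followed by a union bound over the $M=\mathrm{poly}(n)\cdot 2^k$ clauses. This is precisely where the hypothesis $k=\Omega(\ln n)$ is used: it makes $2^{-k}$ smaller than any prescribed inverse polynomial in $n$, controlling simultaneously the inclusion--exclusion corrections $R_k$ in the moment calculations and the aggregate of per-clause fluctuations in the concentration bound, while keeping the union-bound loss absorbed by the final sub-exponential rate.
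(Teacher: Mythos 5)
Your high-level framing (second moment plus a Frieze-style boosting via concentration of $\log(1+N)$) does not match what the paper does for this theorem, and the mismatch traces to a misdiagnosis of where the raw second moment goes wrong. In the paper's decomposition, the ``atypical, highly correlated'' pairs (some $d_H(\bs^{(i)},\overline{\bs}^{(j)})\le\epsilon n$) are \emph{not} the source of inflation: a simple cardinality bound shows there are at most $\sim 2^{n(1+h_b(\epsilon))}L^2$ such pairs, and with $\epsilon$ chosen so that $h_b(\epsilon)<1-m\gamma$, this class contributes $A_\epsilon/\mathbb{E}[N]^2=e^{-\Theta(n)}$. What actually threatens the second moment (for constant $k$) is the \emph{generic} part $B_\epsilon$: even for well-separated pairs, the per-clause inclusion--exclusion correction $\sim(1-\epsilon)^k 2^{-k}$ is positive, and after raising to $M=n\gamma2^k\ln2$ it inflates $B_\epsilon/\mathbb{E}[N]^2$ by $\exp(\Theta(n(1-\epsilon)^k))$. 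The entire role of the hypothesis $k\ge C\ln n$ in the paper is to make $n(1-\epsilon)^k=o(1)$ (and similarly $n(1-\beta+\eta)^k=o(1)$), so that $B_\epsilon/\mathbb{E}[N]^2=1+o_n(1)$ and Paley--Zygmund yields $\mathbb{P}[N\ge1]\ge1-o_n(1)$ \emph{directly}, with no boosting step. The Frieze/Azuma boosting, along with the auxiliary variable $Z_{\beta,\eta}$, is reserved by the paper for the genuinely harder constant-$k$ regime (Theorem~\ref{thm:m-ogp-k-sat-absent-constant-k}), where the second moment ratio really is $\exp(nc^k)$.

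So your plan is not a different correct route so much as an unnecessarily elaborate one grafted onto a wrong picture of the second moment. Concretely: (i) the sentence ``the generic contribution reproduces $(1+o(1))\mathbb{E}[N]^2$, but atypical, highly correlated pairs can inflate the sum by $2^{\delta n}$'' has the roles of the two classes reversed relative to what actually happens; (ii) once $k=\Omega(\ln n)$ you should not be getting $\mathbb{P}[N>0]\ge2^{-\delta n}$ from Paley--Zygmund but $1-o(1)$, making the concentration step superfluous; and (iii) you are deploying $k=\Omega(\ln n)$ in the wrong place (to control the Lipschitz constant of a truncated $\log(1+N)$ under McDiarmid) rather than in the moment ratio, where it does the real work. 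The truncation scheme you sketch for bounded differences is also left at the level of a plan -- a single adversarial clause can drop $N$ to $0$, so certifying the typical-event decomposition with the right failure probability would require genuine work. If you replace all of that with the paper's $A_\epsilon/B_\epsilon$ split (defined via a cross-distance threshold $\epsilon n$ with $h_b(\epsilon)<1-m\gamma$), apply Lemma~\ref{lemma:random-k-sat-prob} to the pairwise clause-failure events, and use $k\ge C\ln n$ to kill $n(1-\epsilon)^k$, the proof closes in one pass.
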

The proof of Theorem~\ref{thm:m-ogp-k-sat-absent} relies on the second moment method, see Section~\ref{sec:pf-ogp-absent-k-sat} for details. Theorem~\ref{thm:m-ogp-k-sat-absent} yields that for any $m\in\N$, $\alpha<(2^k\ln 2)/m$ and $k$ growing mildly in $n$, the $m$-OGP is absent: nearly equidistant $m$-tuples of satisfying solutions exist at all pairwise distances. Taken together, Theorems~\ref{thm:m-ogp-k-sat} and~\ref{thm:m-ogp-k-sat-absent} collectively yield that for $k=\Omega(\ln n)$, the onset of $m$-OGP occurs at density $(2^k\ln 2)/m$. In Theorem~\ref{thm:m-ogp-sharp-PT-sat} below, we combine Theorems~\ref{thm:m-ogp-k-sat} and~\ref{thm:m-ogp-k-sat-absent} to establish a sharp phase transition.

We remark on the condition $k=\Omega(\log n)$. Several earlier developments in the random $k$-SAT literature, particularly those concerning the satisfiability threshold, were also established in the same regime $k=\Omega(\ln n)$, see~\cite{frieze2005random,coja2008random,liu2012note}. In Theorem~\ref{thm:m-ogp-k-sat-absent-constant-k} below, 
we establish the absence of the $m$-OGP for all sufficiently large $k=O(1)$, in the set of nearly satisfying assignments that violate only a small fraction of clauses. 

Our final remark concerns the technical condition $\xi\le \frac12$ in Theorem~\ref{thm:m-ogp-k-sat-absent}. For Theorem~\ref{thm:m-ogp-k-sat-absent} to be non-trivial, there must exist $m$-tuples $\bs^{(1)},\dots,\bs^{(m)}\in\{0,1\}^n$ such that for all sufficiently small $\eta>0$, their Hamming distances satisfy $d_H(\bs^{(i)},\bs^{(j)})\in[\xi-\eta,\xi]$ for $1\le i<j\le m$. (In fact, our proof  extends to any $\xi$ for which such tuples exist.) An application of the probabilistic method---in particular Lemma~\ref{lemma:F-NON-EMPTY}---confirms the existence of such tuples for $\xi\le \frac12$; however, the case $\xi>\frac12$ remains unclear.\footnote{While Lemma~\ref{lemma:F-NON-EMPTY} focuses on $\bs\in\{-1,1\}^n$, a straightforward modification extends it to $\{0,1\}^n$ for $\xi\le \frac12$.}  For this reason, we restrict our attention to $\xi\le \frac12$, leaving $\xi>\frac12$ for future work.
\paragraph{A Sharp Phase Transition} 
Similar to the Ising $p$-spin glass model, we define the notion of \emph{admissible} values of $\gamma$ by modifying Definition~\ref{def:adm-exp}. 
\begin{definition}\label{def:adm-gamma-sat}
    Fix an $m\in\N$. A value $\gamma>0$ is called $m$-admissible if for any $0<\eta<\xi<1$ there exists a $C(\gamma,m,\xi,\eta)\in\N$ such that for every $k\ge C(\gamma,m,\xi,\eta)\ln n$,
    \[
\lim_{n\to\infty}\mathbb{P}\bigl[\mathcal{S}_{\mathrm{k-SAT}}(\gamma,m,\xi,\eta)\ne\varnothing\bigr] =1.
    \]
    Similarly, $\gamma$ is called $m$-inadmissible if there exists $0<\eta<\xi<1$, $\eta$ sufficiently small, and a $C(\gamma,m,\xi,\eta)\in\N$ such that for every $k\ge C(\gamma,m,\xi,\eta)\ln n$,
    \[
\lim_{n\to\infty}\mathbb{P}\bigl[\mathcal{S}_{\mathrm{k-SAT}}(\gamma,m,\xi,\eta)\ne\varnothing\bigr] =0.
    \]
\end{definition}
Equipped with Definition~\ref{def:adm-gamma-sat}, we now analyze the sharp phase transition of the $m$-OGP in the sense of
Definition~\ref{def:sharp-PT} and establish the following result.
\begin{theorem}\label{thm:m-ogp-sharp-PT-sat}
    For any $m\in\N$, the $m$-OGP for the random $k$-SAT model exhibits a sharp phase transition in the sense of Definition~\ref{def:sharp-PT} at $\gamma_{\mathrm{k-SAT}}=1/m$.
\end{theorem}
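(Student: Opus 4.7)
The plan is to deduce Theorem~\ref{thm:m-ogp-sharp-PT-sat} as an essentially immediate corollary of the two main technical results already stated for the $k$-SAT model, namely Theorem~\ref{thm:m-ogp-k-sat} (presence of $m$-OGP above density $\gamma 2^k\ln 2$ when $\gamma>1/m$) and Theorem~\ref{thm:m-ogp-k-sat-absent} (absence of $m$-OGP below that density when $\gamma<1/m$), combined with the definitions of admissibility (Definition~\ref{def:adm-gamma-sat}) and sharp phase transition (Definition~\ref{def:sharp-PT}). Concretely, I would verify two inclusions: (a) every $\gamma<1/m$ is $m$-admissible, so $\sup\{\gamma:\gamma\text{ is $m$-admissible}\}\ge 1/m$; (b) every $\gamma>1/m$ is $m$-inadmissible, so $\inf\{\gamma:\gamma\text{ is $m$-inadmissible}\}\le 1/m$. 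Together with the trivial inequality $\sup\le\inf$ that follows from the fact that no $\gamma$ can be simultaneously admissible and inadmissible, these give the chain $1/m\le \sup\le\inf\le 1/m$, forcing equality to $\gamma_m=1/m$ in the sense of Definition~\ref{def:sharp-PT}.

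For (a), I would fix any $\gamma<1/m$ and any $0<\eta<\beta<1$. If $\beta\le \tfrac12$, Theorem~\ref{thm:m-ogp-k-sat-absent} supplies a constant $C(\gamma,m,\beta,\eta)$ so that for all $k\ge C(\gamma,m,\beta,\eta)\ln n$ we have $\mathbb{P}[\mathcal{S}(\gamma,m,\beta,\eta)\ne\varnothing]\ge 1-e^{-\Theta(n)}\to 1$, matching Definition~\ref{def:adm-gamma-sat}. For $\tfrac12<\beta<1$, I would reduce to the case $\beta\le \tfrac12$ by the standard symmetry $\bs^{(t)}\mapsto \mathbf{1}-\bs^{(t)}$ applied to a suitably chosen subset of indices (or, alternatively, by observing that the admissibility condition only needs to hold for some specified parameter window; one can restrict attention to $\beta\le\tfrac12$ in the definition without loss for this purpose, matching the scope in which Theorem~\ref{thm:m-ogp-k-sat-absent} applies). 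For (b), I would fix $\gamma>1/m$; Theorem~\ref{thm:m-ogp-k-sat} produces specific $0<\eta<\beta\le\tfrac12$ and $K^*$ so that for all $k\ge K^*$, $\mathbb{P}[\mathcal{S}(\gamma,m,\beta,\eta)\ne\varnothing]\le e^{-\Theta(n)}\to 0$. Since $k=\Omega(\ln n)$ certainly exceeds any fixed $K^*$ for all large enough $n$, this matches the inadmissibility condition verbatim.

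The only mildly non-routine step is verifying that the quantifier structures of Definitions~\ref{def:adm-gamma-sat} and~\ref{def:sharp-PT} align with what the two source theorems actually deliver: admissibility is an ``$\forall\,(\eta,\beta)$, $\exists\, C$'' statement whereas Theorem~\ref{thm:m-ogp-k-sat-absent} already provides such a $C$ for every $(\eta,\beta)$, so the universal quantifier is handled. Inadmissibility is an ``$\exists\,(\eta,\beta)$, $\exists\, C$'' statement, and Theorem~\ref{thm:m-ogp-k-sat} supplies a specific triple $(\beta,\eta,K^*)$, which is exactly what is needed. I would finally record the strict monotonicity of $\gamma_m=1/m$ in $m$ as an immediate observation, mirroring the closing remark following Theorem~\ref{thm:m-ogp-sharp-PT}.

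In short, there is no real obstacle: the entire proof is bookkeeping on top of Theorems~\ref{thm:m-ogp-k-sat} and~\ref{thm:m-ogp-k-sat-absent}, and is strictly parallel to the (omitted) argument for Theorem~\ref{thm:m-ogp-sharp-PT} in the $p$-spin setting. The only thing to watch is the $\beta\le\tfrac12$ hypothesis in Theorem~\ref{thm:m-ogp-k-sat-absent}, which is addressed as described above.
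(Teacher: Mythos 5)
Your proposal matches the paper's (essentially omitted) argument: the paper simply asserts that Theorem~\ref{thm:m-ogp-sharp-PT-sat} is a direct consequence of Theorems~\ref{thm:m-ogp-k-sat} and~\ref{thm:m-ogp-k-sat-absent} and refers to the same two-display bookkeeping given for the $p$-spin case in Section~\ref{pf:PT}. Two points of friction, though.

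First, the claim that ``$\sup\le\inf$ follows from the fact that no $\gamma$ can be simultaneously admissible and inadmissible'' is not right on its own: disjointness of two subsets of $(0,\infty)$ does not force the supremum of one to lie below the infimum of the other. What actually closes the argument is the combination of (a), (b), and disjointness: since $(1/m,\infty)\subset\{\text{inadmissible}\}$ and the sets are disjoint, $\{\text{admissible}\}\subset(0,1/m]$, giving $\sup\le 1/m$; symmetrically $\{\text{inadmissible}\}\subset[1/m,\infty)$, giving $\inf\ge 1/m$. That is the same conclusion, but it is (a) and (b), not disjointness alone, doing the work.

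Second, and more substantively, you correctly flag the mismatch between the quantifier ``for any $0<\eta<\beta<1$'' in Definition~\ref{def:adm-gamma-sat} and the hypothesis $\beta\le\tfrac12$ in Theorem~\ref{thm:m-ogp-k-sat-absent}. However, the proposed reduction via a symmetry $\bs^{(t)}\mapsto \mathbf{1}-\bs^{(t)}$ on a subset of indices does not work for $m\ge 3$: flipping a subset $S\subset[m]$ of the $m$ points negates $d_H(\bs^{(i)},\bs^{(j)})\mapsto n-d_H(\bs^{(i)},\bs^{(j)})$ exactly for those pairs with $|S\cap\{i,j\}|=1$ and leaves the rest untouched, so one cannot uniformly carry a configuration at pairwise distance $\approx\beta n$ to one at distance $\approx(1-\beta)n$. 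Flipping the same coordinate set in all $\bs^{(t)}$ leaves every distance unchanged, so that does not help either. The paper itself acknowledges (in the remark following Theorem~\ref{thm:m-ogp-k-sat-absent}) that it is not even known whether the constraint set $\widehat{\mathcal{F}}(m,\beta,\eta)$ is nonempty for $\beta>\tfrac12$; hence the correct resolution is your parenthetical alternative, namely that the admissibility condition in Definition~\ref{def:adm-gamma-sat} is to be read as quantified only over those $\beta$ covered by Theorem~\ref{thm:m-ogp-k-sat-absent} (i.e.\ $\beta\le\tfrac12$, or more generally $\beta$ for which $\widehat{\mathcal{F}}(m,\beta,\eta)\ne\varnothing$), matching the paper's declared scope. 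With that reading fixed, the rest of your bookkeeping is exactly the paper's.
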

Theorem~\ref{thm:m-ogp-sharp-PT-sat} follows directly from Theorems~\ref{thm:m-ogp-k-sat} and~\ref{thm:m-ogp-k-sat-absent}; see Section~\ref{pf:PT} for its proof. Notably, the phase transition point $\gamma_{\mathrm{k-SAT}}(m)$ is, once again, strictly monotonic in $m$.

\subsection{Case of Constant $k$}\label{set:CONSTANT}
The $m$-OGP result for the random $k$-SAT (Theorem~\ref{thm:m-ogp-k-sat}) holds for all sufficiently large $k$, while Theorem~\ref{thm:m-ogp-k-sat-absent} along with the sharp phase transition result (Theorem~\ref{thm:m-ogp-sharp-PT-sat}) apply to $k=\Omega(\ln n)$. A fundamental question arises: 
can we extend the analysis to the challenging regime where $k$ is constant? 

In this section, we address this question by analyzing a variant of the $k$-SAT model that focuses on nearly satisfying assignments---assignments that violate only a small fraction of clauses. This setting is closely related to the maximum satisfiability (MAX-SAT) model. For this model, we establish the absence of the $m$-OGP for sufficiently large constant $k$.
\paragraph{An Objective Function: Number of Violated Constraints} 
Given a random $k$-SAT formula $\Phi$ which is a conjunction of $M$ independent $k$-clauses $\mathcal{C}_1,\dots,\mathcal{C}_M$, and a truth assignment $\bs\in\{0,1\}^n$, define $\mathcal{L}(\bs)$ as the number of clauses violated by $\bs$:
$\mathcal{L}(\bs) = \sum_{1\le i\le M}\ind\bigl\{\mathcal{C}_i(\bs)=0\bigr\}$. In particular, $\Phi(\bs)=1$ iff $\mathcal{L}(\bs)=0$. With this, we modify Definition~\ref{def:m-ogp-k-sat} accordingly.
\begin{definition}\label{def:modified-k-sat}
    Let $k\in \N$, $\gamma\in (0,1)$, $m\in \N$, $0<\eta<\xi<1$, and $\kappa\in[0,1]$. Denote by $\mathcal{S}_{\mathrm{k-SAT}}(\gamma,m,\xi,\eta,\kappa)$ the set of all $m$-tuples $\bs^{(t)}\in\{0,1\}^n$, $1\le t\le m$, that satisfy the following.
    \begin{itemize}
        \item {\textbf{Near-Optimality:}} For any $1\le i\le m$, $M^{-1}\mathcal{L}(\bs^{(i)})\le \kappa$, where $M=n\alpha_\gamma$, $\alpha_\gamma=\gamma 2^k\ln 2$.
        \item {\textbf{Overlap Constraint:}} For any $1\le t<\ell \le m$, $n^{-1}d_H(\bs^{(t)},\bs^{(\ell)})\in[\xi-\eta,\xi]$.
    \end{itemize}
\end{definition}
That is, $\mathcal{S}_{\mathrm{k-SAT}}(\gamma,m,\xi,\eta,\kappa)$ denotes the set of all nearly equidistant $m$-tuples of truth assignments such that for any $\bigl(\bs^{(1)},\dots,\bs^{(m)}\bigr)\in \mathcal{S}_{\mathrm{k-SAT}}(\gamma,m,\xi,\eta,\kappa)$ any $1\le i\le m$, fraction of clauses violated by $\bs^{(i)}$ is at most $\kappa$. In particular, when $\kappa=0$, we recover the original definition: $\mathcal{S}_{\mathrm{k-SAT}}(\gamma,m,\xi,\eta)=\mathcal{S}_{\mathrm{k-SAT}}(\gamma,m,\xi,\eta,0)$. 

We are now ready to state our final main result.
\begin{theorem}\label{thm:m-ogp-k-sat-absent-constant-k}
    For any $m\in \N$, $\gamma<1/m$, and $0<\eta<\xi<1$, there exists constants $C>0$, $K^*\in\N$ and $\zeta\in(0,1)$ such that the following holds. Fix any $k\ge K^*$. Then as $n\to\infty$,
    \[
\mathbb{P}\bigl[\mathcal{S}_{\mathrm{k-SAT}}\bigl(\gamma,m,\xi,\eta,C\zeta^{k/2}\bigr)\ne\varnothing\bigr]\ge 1-e^{-\Theta(n)}.
    \]
\end{theorem}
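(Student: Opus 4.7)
The plan is to apply Frieze's trick from~\cite{frieze1990independence}: couple a (quantitatively weak) vanilla second moment lower bound with sharp concentration of an auxiliary random variable, so that the constant-probability existence promised by the former is amplified to a high-probability existence via the latter.

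\textbf{Auxiliary variable.} Let $\mathcal{F}=\mathcal{F}(\beta,\eta)$ denote the set of $m$-tuples $\mathbf{s}=(\bs^{(1)},\dots,\bs^{(m)})\in(\{0,1\}^n)^m$ with $n^{-1}d_H(\bs^{(t)},\bs^{(\ell)})\in[\beta-\eta,\beta]$ for every $t\ne\ell$, and set
\[
Y^*=\min_{\mathbf{s}\in\mathcal{F}}\max_{1\le i\le m}\mathcal{L}(\bs^{(i)}).
\]
With $M=n\gamma 2^k\ln 2$, the event $\{\mathcal{S}(\gamma,m,\beta,\eta,\kappa)\ne\varnothing\}$ coincides with $\{Y^*\le\kappa M\}$, so it suffices to show $\mathbb{P}[Y^*\le C^*\,2^{-k/2} M]\ge 1-e^{-\Theta(n)}$ for a suitable $C^*$. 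Regarded as a function of the $M$ i.i.d.\ clauses $\mathcal{C}_1,\dots,\mathcal{C}_M$, the random variable $Y^*$ is $1$-Lipschitz: resampling a single clause perturbs every $\mathcal{L}(\bs^{(i)})$ by at most $1$, and this perturbation passes through the inner $\max$ and the outer $\min$. McDiarmid's bounded-differences inequality therefore gives, with $\mu=\mathbb{E}[Y^*]$,
\[
\mathbb{P}\bigl[|Y^*-\mu|\ge t\bigr]\le 2\exp\bigl(-2t^2/M\bigr).
\]

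\textbf{Second moment.} Let $Z=|\mathcal{S}(\gamma,m,\beta,\eta,0)|$ count fully satisfying tuples in $\mathcal{F}$. A first moment computation using the agreement-pattern profile of a typical $m$-tuple in $\mathcal{F}$ gives $\mathbb{E}[Z]=e^{\Theta(n)}$ for $\gamma<1/m$ once $k$ is large (this is precisely the threshold responsible for Theorem~\ref{thm:m-ogp-k-sat}). For the second moment one enumerates pairs $(\mathbf{s},\mathbf{t})\in\mathcal{F}^2$ indexed by their joint agreement pattern in $\{0,1\}^{2m}$ and evaluates the joint satisfaction probability by a Laplace-method argument; the target estimate is
\[
\frac{\mathbb{E}[Z^2]}{(\mathbb{E}[Z])^2}\le\exp\bigl(nc^k\bigr)\qquad\text{for some }c=c(\gamma,m,\beta,\eta)\in(0,1),
\]
which is the quantitative form of the ``vanilla second moment fails'' statement noted in the paper's introduction to Section~\ref{set:CONSTANT}. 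Paley--Zygmund then yields $\mathbb{P}[Y^*=0]\ge\mathbb{E}[Z]^2/\mathbb{E}[Z^2]\ge\exp(-nc^k)$.

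\textbf{Combining.} Applying McDiarmid with $t=\mu$ gives $\mathbb{P}[Y^*=0]\le 2\exp(-2\mu^2/M)$; chaining with the second moment bound forces
\[
\mu\le\sqrt{M(nc^k+\ln 2)/2}.
\]
Substituting $M=n\gamma 2^k\ln 2$ yields $\mu/M\le C_0\,c^{k/2}\,2^{-k/2}$, which is at most $C_0\,2^{-k/2}$ for all $k\ge K^*$. A final one-sided application of McDiarmid at scale $t=2^{-k/2}M$ gives
\[
\mathbb{P}\bigl[Y^*>(C_0+1)\,2^{-k/2}M\bigr]\le\mathbb{P}\bigl[Y^*-\mu>2^{-k/2}M\bigr]\le\exp(-2\cdot 2^{-k}M)=e^{-\Theta(n)},
\]
since $2^{-k}M=n\gamma\ln 2=\Theta(n)$ for constant $k$. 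Taking $C^*=C_0+1$ proves the theorem.

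\textbf{Main obstacle.} The technically delicate step is the second moment bound. Pairs of $m$-tuples form a much richer space than pairs of single assignments: their joint structure is governed by $2^{2m}$ agreement frequencies subject to $2\binom{m}{2}$ marginal distance constraints, and evaluating $\mathbb{E}[Z^2]$ amounts to a constrained optimization to identify the dominant (``independent'') configuration and rule out symmetry-breaking near it. Extracting the decay $c^k$ with $c<1$ relies on the exponential decoupling of $k$-clause satisfaction probabilities across overlap profiles once $k$ is large, and the bookkeeping for this is the core of the proof.
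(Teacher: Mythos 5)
Your proof is correct and follows essentially the same approach as the paper's: an auxiliary Lipschitz statistic, Azuma/McDiarmid concentration, and the vanilla second-moment lower bound from the proof of Theorem~\ref{thm:m-ogp-k-sat-absent}, chained together via Frieze's trick. The only differences are cosmetic: you track $Y^* = \min_{\mathbf{s}\in\mathcal{F}}\max_i \mathcal{L}(\bs^{(i)})$ (worst-case violated clauses) while the paper tracks $Z_{\beta,\eta} = \max_{\mathbf{s}\in\mathcal{F}}\min_i \widehat{\mathcal{L}}(\bs^{(i)})$ (worst-case satisfied clauses); these are related by $Y^* = M - Z_{\beta,\eta}$, so the two arguments are identical after reflecting about $M/2$.
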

See below for the proof sketch and Section~\ref{sec:pf-thm:m-ogp-k-sat-absent-constant-k} for the complete proof. 

Theorem~\ref{thm:m-ogp-k-sat-absent-constant-k} asserts that for any $m\in\N$, $\gamma<1/m$ and $0<\eta<\xi<1$, there exists  $m$-tuples $(\bs^{(1)},\dots,\bs^{(m)})$ such that the normalized Hamming distances satisfy $n^{-1}d_H(\bs^{(i)},\bs^{(j)})\in[\xi-\eta,\xi]$, and for each $1\le i\le m$, $\bs^{(i)}$ satisfies at least $(1-O(\zeta^{k/2}))M$ many clauses. In other words, there exists $m$-tuples of nearly equidistant and nearly satisfying assignments below the density $(2^k\ln 2)/m$. 
\paragraph{Proof Sketch for Theorem~\ref{thm:m-ogp-k-sat-absent-constant-k}}
Recall from above that $\mathcal{S}_{\mathrm{k-SAT}}(\gamma,m,\xi,\eta):=\mathcal{S}_{\mathrm{k-SAT}}(\gamma,m,\xi,\eta,0)$. An inspection of the second moment calculation in the proof of Theorem~\ref{thm:m-ogp-k-sat-absent} reveals:
\begin{equation}\label{eq:2NDMOMKSAT}
    \mathbb{P}[\mathcal{S}_{\mathrm{k-SAT}}(\gamma,m,\xi,\eta)\ne\varnothing]\ge \exp(-2\alpha_\gamma n m^2 \bar{\zeta}^k),
\end{equation}
for some $\bar{\zeta}\in(0,1)$, where $\alpha_\gamma = \gamma 2^k\ln 2$. Next, we define the proxy random variable
\begin{equation}\label{eq:Z-beta-eta}
    Z_{\xi,\eta}:= \max_{\substack{\bs^{(1)},\dots,\bs^{(m)}\in\{0,1\}^n \\ n^{-1}d_H(\bs^{(i)},\bs^{(j)})\in[\xi-\eta,\xi]}}\, \min_{1\le i\le m} \widehat{\mathcal{L}}(\bs^{(i)}),
\end{equation}
where $\widehat{\mathcal{L}}(\bs) = M-\mathcal{L}(\bs)$ is the number of \emph{satisfied} clauses. Viewing $Z_{\xi,\eta}$ as a function of clauses $\mathcal{C}_1,\dots,\mathcal{C}_M$, we obtain 
$
|Z(\mathcal{C}_1,\dots,\mathcal{C}_M)-Z(\mathcal{C}_1,\dots,\mathcal{C}_{i-1},\mathcal{C}_i',\mathcal{C}_{i+1},\dots,\mathcal{C}_M)|\le 1$ for all $i$,
where $\mathcal{C}_i'$ is an independent copy of $\mathcal{C}_i$. So, $Z_{\xi,\eta}$ concentrates due to McDiarmid's inequality~\cite{warnke2016method}: 
\begin{equation}\label{eq:Z_CONNN}
    \mathbb{P}\bigl[\bigl|Z_{\xi,\eta}-\mathbb{E}[Z_{\xi,\eta}]\bigr|\ge t_0\bigr]\le 2\exp\bigl(-t_0^2 /M\bigr),
\end{equation}
for any $t_0\ge 0$.
Notice that $Z_{\xi,\eta}=M$ iff $|\mathcal{S}_{\mathrm{k-SAT}}(\gamma,m,\xi,\eta)|\ge 1$. Using~\eqref{eq:2NDMOMKSAT} and~\eqref{eq:Z_CONNN}, we have
\begin{align*}
    \mathbb{P}[Z_{\xi,\eta}=M] 
    &\ge \exp(-2\alpha_\gamma n m^2 \bar{\zeta}^k)\ge \exp(-(t^*)^2/M)\ge \mathbb{P}[Z_{\xi,\eta}\ge \mathbb{E}[Z_{\xi,\eta}]+t^*],
\end{align*}
where $t^* = \sqrt{3}n\alpha_\gamma m\bar{\zeta}^{k/2}$. Consequently, $\mathbb{E}[Z_{\xi,\eta}]\ge M-t^*=n\alpha_\gamma - t^*$. Applying~\eqref{eq:Z_CONNN} with $t_0=\sqrt{\alpha_\gamma}n$, we obtain that w.h.p., $Z_{\xi,\eta}\ge \mathbb{E}[Z_{\xi,\eta}]-\sqrt{\alpha_\gamma}n\ge n\alpha_\gamma(1-C\zeta^{k/2})$, where $C>0$ and $\zeta\in(0,1)$ are suitable constants. Finally, recalling
\[
Z_{\gamma,\eta}\ge n\alpha_\gamma(1-C\zeta^{k/2}) \iff  |\mathcal{S}_{\mathrm{k-SAT}}(\gamma,m,\xi,\eta,C\zeta^{k/2})|\ge 1
\]
we complete the proof of Theorem~\ref{thm:m-ogp-k-sat-absent-constant-k}.
\section{Future Directions \& Further Background on OGP}\label{sec:future-work}
An immediate direction is to complement Theorem~\ref{thm:m-ogp-k-sat-absent-constant-k} with an $m$-OGP result and to establish a sharp threshold for `almost solutions' to random $k$-SAT, analogous to Theorems~\ref{thm:m-ogp-sharp-PT} and \ref{thm:m-ogp-sharp-PT-sat}. Extensions to other spin glass models, such as mixed $p$-spin or multi-species models, are also
intriguing directions for future work.
\paragraph{Random $k$-SAT for Constant $k$} Extending Theorem~\ref{thm:m-ogp-k-sat-absent} to constant $k$ remains an intriguing question we leave for future work. A promising starting point is the NAE-$k$-SAT model, a variant of $k$-SAT where each clause must contain at least one true and one false literal. The inherent symmetry of NAE-$k$-SAT makes it particularly amenable to rigorous analysis. While the proof of satisfiability conjecture for random $k$-SAT was a long and arduous journey, culminating in the breakthrough tour de force by Ding, Sly, and Sun~\cite{ding2015proof}, analogous results for the NAE-$k$-SAT were established much earlier, using fairly standard second moment calculations by Achlioptas and Moore~\cite{achlioptas2002asymptotic}. This suggests that sharper results may be attainable for the NAE-$k$-SAT, potentially avoiding $k=\Omega(\log n)$ growth.
\paragraph{OGP and Algorithms} The interplay between the OGP and algorithms remains a key frontier, with far-reaching implications for average-case computational complexity. While the OGP formally implies the failure of large classes of algorithms, certain tractable optimization problems---solvable by linear programming---still exhibit the OGP~\cite{li2024some}. What makes these cases exceptional? What is the broadest class of algorithms that the OGP can provably rule out?

Likewise, what are the implications of the absence of the OGP? For instance, the algorithm by Montanari~\cite{montanari2019FOCS}, finding a near-ground state for the SK model, crucially relies on the widely-believed conjecture that the model does not exhibit OGP. Our results indicate that the absence of OGP at a certain level $m$ does not necessarily imply algorithmic tractability. Could there exist models without OGP that still pose insurmountable computational barriers? Finally, our findings reveal an intriguing scaling phenomenon: for certain models, the power of $m$-OGP in establishing algorithmic hardness strengthens indefinitely as $m$ increases. When does this trend persist and when does it saturate?
\vspace{-0.1in}

\subsubsection*{Further Background on the Overlap Gap Property}\label{sec:OGP-background}
The Ising $p$-spin and the random $k$-SAT models are two prototypical examples exhibiting a  \emph{statistical-computational gap} (\texttt{SCG}), where the best known algorithm perform strictly worse than what is existentially achievable. While standard complexity theory often falls short of explaining such average-case hardness (with notable exceptions such as~\cite{ajtai1996generating,boix2021average,GK-SK-AAP}), several frameworks have emerged to characterize \texttt{SCG}s and provide rigorous evidence of computational hardness. For a broader perspective on these approaches, we refer the reader to excellent surveys~\cite{bandeira2018notes,gamarnik2021overlap,gamarnik2022disordered}. 
\paragraph{Overlap Gap Property (OGP)} As discussed earlier, prior work~\cite{mezard2005clustering,achlioptas2006solution,achlioptas2008algorithmic} discovered an intriguing link between the geometric properties of the solution space and algorithmic hardness in certain random constraint satisfaction problems: the onset of clustering roughly coincides with the threshold beyond which no efficient algorithm is known. While this connection was highly suggestive, these works did not provide formal algorithmic hardness results. The first rigorous hardness results leveraging geometric obstructions came from Gamarnik and Sudan~\cite{gamarnik2014limits} through the OGP framework. 
Since then, OGP has proven to be a powerful tool for establishing (nearly sharp) algorithmic lower bounds across a wide range of average-case models, including random graphs~\cite{gamarnik2014limits,gamarnik2017,rahman2017local,gamarnik2020low,wein2020optimal,gamarnik2025optimal}, random CSPs~\cite{gamarnik2017performance,bresler2021algorithmic}, spin glasses~\cite{chen2019suboptimality,gamarnik2020low,gamarnikjagannath2021overlap,huang2021tight,huang2023algorithmic}, binary perceptron~\cite{gamarnik2022algorithms,gamarnik2023geometric,li2024discrepancy}, and number balancing~\cite{gamarnik2021algorithmic} as well as its planted variant~\cite{kizildaug2023planted}. For a comprehensive overview on OGP, see the survey by Gamarnik~\cite{gamarnik2021overlap}.
\paragraph{Multi OGP ($m$-OGP)} The OGP framework was first applied to largest independent set problem in sparse random graphs with $n$ vertices and average degree $d$ by Gamarnik and Sudan~\cite{gamarnik2014limits}. The largest independent set has asymptotic size $2\frac{\log d}{d}n$~\cite{frieze1992independence,bayati2010combinatorial}, yet the best known algorithm achieves only $\frac{\log d}{d}n$. For this model, Gamarnik and Sudan~\cite{gamarnik2014limits,gamarnik2017} showed that independent sets larger than $(1+1/\sqrt{2})\frac{\log d}{d}n$ exhibit the OGP: they either overlap substantially or are nearly disjoint, leading to the failure of local algorithms. Rahman and Vir{\'a}g~\cite{rahman2017local} achieved a sharp lower bound matching $\frac{\log d}{d}n$ by extending OGP to many independent sets ($m$-OGP). Gamarnik and Sudan~\cite{gamarnik2017performance} introduced the symmetric $m$-OGP, asserting the absence of tuples of nearly equidistant solutions, obtaining nearly sharp bounds for the NAE-$k$-SAT. A similar approach also yielded sharp lower bounds for binary perceptron~\cite{gamarnik2022algorithms,gamarnik2023geometric,li2024discrepancy} and lower bounds well below the optimal value for random number partitioning problem~\cite{gamarnik2022random,gamarnik2021algorithmic}.
More recently, asymmetric variants of the OGP have emerged, incorporating intricate overlap patterns where the $i\mathrm{th}$ solution has intermediate overlap with the first $i-1$ solutions. These structures have led to sharp lower bounds against low-degree polynomials~\cite{wein2020optimal,bresler2021algorithmic}. Even more recently, Huang and Sellke~\cite{huang2021tight,huang2023algorithmic} introduced the branching OGP, establishing tight hardness guarantees for the $p$-spin model via an ultrametric tree of solutions.

\section{Proofs}\label{sec:PFs}
We begin by listing some additional notation used below. For any $r\in\R^+$, denote by logarithm and the exponential functions base $r$, respectively by $\log_r(\cdot)$ and $\exp_r(\cdot)$; when $r=e$, we denote the former by $\ln(\cdot)$ and the latter by $\exp(\cdot)$. For $q\in[0,1]$, let $h_b(q)$ denotes the entropy of a Bernoulli variable with parameter $q$, i.e.\,$h_b(q)= -q\log_2 q -(1-q)\log_2(1-q)$. 

\subsection{Auxiliary Results}
Below, we collect several auxiliary results. The first one is a simple counting estimate, see~\cite[Theorem~3.1]{galvin2014three} for a proof.
\begin{lemma}\label{lemma:COUN}
    Fix $\alpha\le 1/2$. Then, for all $n$,
    \[
    \sum_{i\le \alpha n}\binom{n}{i}\le 2^{nh_b(\alpha)}.
    \]
\end{lemma}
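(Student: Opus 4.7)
The plan is to exploit the binomial identity
\[
1 \;=\; (\alpha+(1-\alpha))^n \;=\; \sum_{i=0}^n \binom{n}{i}\alpha^i(1-\alpha)^{n-i},
\]
applied with the specific ``probability'' $p=\alpha$. Truncating the sum on the right to the indices $i\le \alpha n$ immediately gives
\[
\sum_{i\le \alpha n}\binom{n}{i}\alpha^i(1-\alpha)^{n-i}\le 1,
\]
and the strategy will then be to factor out of this truncated sum a clean lower bound for the weights $\alpha^i(1-\alpha)^{n-i}$ that depends only on $\alpha$ and $n$, turning the probability inequality into a bound on $\sum_{i\le \alpha n}\binom{n}{i}$.

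The key step is a short monotonicity argument that uses the hypothesis $\alpha\le 1/2$. Observing that the ratio of successive weights satisfies
\[
\frac{\alpha^{i+1}(1-\alpha)^{n-i-1}}{\alpha^i(1-\alpha)^{n-i}} \;=\; \frac{\alpha}{1-\alpha}\;\le\; 1,
\]
the map $i\mapsto \alpha^i(1-\alpha)^{n-i}$ is non-increasing on $\{0,1,\dots,n\}$. Consequently, for every index $i\le \alpha n$ appearing in the truncated sum,
\[
\alpha^i(1-\alpha)^{n-i}\;\ge\; \alpha^{\alpha n}(1-\alpha)^{(1-\alpha)n}.
\]
Substituting this uniform lower bound into the truncated binomial identity and rearranging yields
\[
\sum_{i\le \alpha n}\binom{n}{i} \;\le\; \alpha^{-\alpha n}(1-\alpha)^{-(1-\alpha)n}.
\]

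The final step is a direct identification: by definition of the binary entropy, $\alpha^{-\alpha}(1-\alpha)^{-(1-\alpha)}=2^{h_b(\alpha)}$, so raising to the $n$-th power produces the claimed bound $2^{n h_b(\alpha)}$. There is no genuine obstacle here; the only delicate point is that the hypothesis $\alpha\le 1/2$ is essential precisely to make the monotonicity inequality on the Bernoulli weights go in the right direction, and removing it would invalidate that step (and, indeed, the statement itself, since for $\alpha>1/2$ the tail sum captures nearly all of $2^n$).
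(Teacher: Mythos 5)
Your proof is correct and is the standard Bernoulli-weighting argument; the paper does not reprove the lemma but cites Galvin's notes, where essentially this same argument appears. One minor point worth making explicit is that $\alpha n$ need not be an integer, but since $t\mapsto\alpha^t(1-\alpha)^{n-t}=(1-\alpha)^n(\alpha/(1-\alpha))^t$ is non-increasing as a function of real $t$ when $\alpha\le 1/2$, the inequality $\alpha^i(1-\alpha)^{n-i}\ge\alpha^{\alpha n}(1-\alpha)^{(1-\alpha)n}$ indeed holds for every integer $i\le\alpha n$, so the step goes through.
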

We employ \emph{second moment method} via Paley-Zygmund inequality, recorded below.
\begin{lemma}\label{lemma:PZ}
    Let $Z$ be a random variable with $\mathbb{P}[Z\ge 0]=1$ and $\mathrm{Var}(Z)<\infty$. Then, for any $\theta\in[0,1]$,
    \[
    \mathbb{P}[Z>\theta\mathbb{E}[Z]]\ge (1-\theta)^2\frac{\mathbb{E}[Z]^2}{\mathbb{E}[Z^2]}.
    \]
\end{lemma}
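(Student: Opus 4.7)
The plan is to prove Lemma~\ref{lemma:PZ} by the standard decomposition-plus-Cauchy-Schwarz argument. Decompose the expectation of $Z$ according to whether $Z$ is above or below the threshold $\theta\mathbb{E}[Z]$:
\[
\mathbb{E}[Z] \;=\; \mathbb{E}\bigl[Z\,\ind\{Z\le \theta\mathbb{E}[Z]\}\bigr] \;+\; \mathbb{E}\bigl[Z\,\ind\{Z> \theta\mathbb{E}[Z]\}\bigr].
\]
Bound the first term trivially using nonnegativity of $Z$, namely $\mathbb{E}[Z\,\ind\{Z\le\theta\mathbb{E}[Z]\}] \le \theta\mathbb{E}[Z]$. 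This isolates the quantity we want to lower bound, yielding $(1-\theta)\mathbb{E}[Z] \le \mathbb{E}[Z\,\ind\{Z>\theta\mathbb{E}[Z]\}]$.

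Next I would apply the Cauchy–Schwarz inequality to the right-hand side to introduce $\mathbb{E}[Z^2]$ and the probability $\mathbb{P}[Z>\theta\mathbb{E}[Z]]$:
\[
\mathbb{E}\bigl[Z\,\ind\{Z>\theta\mathbb{E}[Z]\}\bigr]\;\le\;\sqrt{\mathbb{E}[Z^2]}\cdot\sqrt{\mathbb{P}[Z>\theta\mathbb{E}[Z]]}.
\]
Combining the two displays and squaring both sides gives $(1-\theta)^2\mathbb{E}[Z]^2 \le \mathbb{E}[Z^2]\,\mathbb{P}[Z>\theta\mathbb{E}[Z]]$, which rearranges into the claimed inequality. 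The finiteness of $\mathrm{Var}(Z)$ together with nonnegativity ensures $\mathbb{E}[Z]$ and $\mathbb{E}[Z^2]$ are well defined (and the bound is trivially true if $\mathbb{E}[Z^2]=0$, since then $Z\equiv 0$ a.s.\ and the right-hand side is interpreted as $0$).

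There is essentially no obstacle here: the only subtlety is the edge case $\mathbb{E}[Z]=0$ (and hence $Z=0$ a.s.), where the inequality holds vacuously by defining $0/0=0$. The argument is a one-line Cauchy–Schwarz calculation, and no structural features of the models in the paper are used; the lemma is invoked later as a black-box tool for the second moment arguments underpinning Theorems~\ref{thm:m-ogp-tight}, \ref{thm:m-ogp-k-sat-absent}, and~\ref{thm:m-ogp-k-sat-absent-constant-k}.
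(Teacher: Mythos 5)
Your proof is correct and is the standard textbook derivation of the Paley--Zygmund inequality: split $\mathbb{E}[Z]$ at the threshold $\theta\mathbb{E}[Z]$, bound the lower part by $\theta\mathbb{E}[Z]$ using nonnegativity, and apply Cauchy--Schwarz to the upper part. The paper itself does not prove Lemma~\ref{lemma:PZ} but simply cites~\cite{alon2016probabilistic}, so there is no in-paper proof to compare against; your argument matches the standard one found in that reference, and the edge-case discussion ($\mathbb{E}[Z^2]=0$) is handled appropriately.
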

For a proof, see~\cite{alon2016probabilistic}. The next result is a tail bound for multivariate normal random vectors. It is originally due to Savage~\cite{savage1962mills}; the version below is reproduced verbatim from~\cite{hashorva2003multivariate,hashorva2005asymptotics}. 
\begin{theorem}\label{thm:multiv-tail}
Let $\boldsymbol{X}\in\R^d$ be a centered multivariate normal random vector with non-singular covariance matrix $\Sigma\in\R^{d\times d}$ and $\boldsymbol{t}\in\R^d$ be a fixed threshold. Suppose that $\Sigma^{-1}\boldsymbol{t}>\boldsymbol{0}$ entrywise. Then,
\[
1-\ip{1/(\Sigma^{-1}\boldsymbol{t})}{\Sigma^{-1}(1/(\Sigma^{-1}\boldsymbol{t})} \le \frac{\mathbb{P}[\boldsymbol{X}\ge \boldsymbol{t}]}{\varphi_{\boldsymbol{X}}(\boldsymbol{t})\prod_{i\le d}\ip{e_i}{\Sigma^{-1}\boldsymbol{t}}}\le 1,
\]
where $e_i\in\R^d$ is the $i\mathrm{th}$ unit vector and $\varphi_{\boldsymbol{X}}(\boldsymbol{t})$ is the multivariate normal density evaluated at $\boldsymbol{t}$:
\[
\varphi_{\boldsymbol{X}}(\boldsymbol{t}) = (2\pi)^{-d/2}|\Sigma|^{-1/2}\exp\left(-\frac{\boldsymbol{t}^T\Sigma^{-1}\boldsymbol{t}}{2}\right)\in\R^+.
\]
\end{theorem}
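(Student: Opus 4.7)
The natural approach is to reduce the orthant tail integral to an integral against a product of independent exponentials via a shift of the integration region, and then separate the leading product term from a small quadratic correction. First, I would perform the change of variables $\boldsymbol{y}=\boldsymbol{x}-\boldsymbol{t}$, using the expansion $(\boldsymbol{y}+\boldsymbol{t})^T\Sigma^{-1}(\boldsymbol{y}+\boldsymbol{t})=\boldsymbol{t}^T\Sigma^{-1}\boldsymbol{t}+2\boldsymbol{\eta}^T\boldsymbol{y}+\boldsymbol{y}^T\Sigma^{-1}\boldsymbol{y}$ with $\boldsymbol{\eta}\triangleq\Sigma^{-1}\boldsymbol{t}$, to obtain
\[
\mathbb{P}[\boldsymbol{X}\ge\boldsymbol{t}]=\varphi_{\boldsymbol{X}}(\boldsymbol{t})\int_{\boldsymbol{y}\ge\boldsymbol{0}}\exp\bigl(-\boldsymbol{\eta}^T\boldsymbol{y}\bigr)\exp\bigl(-\tfrac12\boldsymbol{y}^T\Sigma^{-1}\boldsymbol{y}\bigr)\,d\boldsymbol{y}.
\]
The hypothesis $\boldsymbol{\eta}>\boldsymbol{0}$ entrywise makes the baseline integral $\int_{\boldsymbol{y}\ge\boldsymbol{0}}\exp(-\boldsymbol{\eta}^T\boldsymbol{y})\,d\boldsymbol{y}=\prod_{i=1}^d 1/\ip{e_i}{\boldsymbol{\eta}}$ finite, and this quantity is exactly the normalizing factor appearing in the theorem.

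I would then obtain the two sides separately. The upper bound is almost immediate: positive-definiteness of $\Sigma^{-1}$ gives $\exp(-\tfrac12\boldsymbol{y}^T\Sigma^{-1}\boldsymbol{y})\le 1$ throughout the positive orthant, so the inner integral is dominated by its baseline value $\prod_i 1/\eta_i$, yielding the $\le 1$ inequality. For the lower bound, the plan is to perform iterated coordinate-wise integration by parts with $u=\exp(-\tfrac12\boldsymbol{y}^T\Sigma^{-1}\boldsymbol{y})$ and $dv=\exp(-\eta_iy_i)\,dy_i$ in each coordinate $i$ in turn: the boundary contribution at $\boldsymbol{y}=\boldsymbol{0}$ recovers the leading term $\prod_i 1/\eta_i$, while each remainder integral picks up a factor of $(\Sigma^{-1}\boldsymbol{y})_i$ from $\partial_i u$. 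Bounding the lingering Gaussian factor crudely by $1$ and using independence of the exponentials to evaluate mixed moments $\int_{\boldsymbol{y}\ge\boldsymbol{0}}\exp(-\boldsymbol{\eta}^T\boldsymbol{y})y_iy_j\,d\boldsymbol{y}$ in terms of $1/(\eta_i\eta_j)\cdot\prod_k 1/\eta_k$, one then assembles the contributions weighted by $(\Sigma^{-1})_{ij}$ into the quadratic form $\ip{1/(\Sigma^{-1}\boldsymbol{t})}{\Sigma^{-1}(1/(\Sigma^{-1}\boldsymbol{t}))}$ appearing in the theorem.

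The main obstacle will be the precise bookkeeping in the lower bound. The mixed-moment integrals contribute an extra factor of $2$ on the diagonal relative to the off-diagonal entries, and one must verify (using the symmetry of $\Sigma^{-1}$ and the entrywise positivity of $\boldsymbol{\eta}$, which ensures that all boundary contributions at $y_i=\infty$ vanish) that the accumulated remainder after every round of integration by parts collapses to exactly the stated quadratic form with the correct sign. A quicker but cruder alternative would apply the elementary Taylor bound $\exp(-u)\ge 1-u$ directly to the Gaussian factor; this yields a lower bound of the same shape but with a suboptimal constant, and recovering Savage's sharp form genuinely seems to require the iterated IBP approach.
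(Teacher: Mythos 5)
The paper does not prove this theorem: it is a black-box citation to Savage~\cite{savage1962mills}, reproduced from Hashorva~\cite{hashorva2003multivariate,hashorva2005asymptotics}, and no in-paper argument exists to compare against. You have therefore set yourself the strictly harder task of re-deriving a classical Mills'-ratio inequality from scratch, so let me assess the sketch on its own merits.

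The reduction and the upper bound are correct and standard. Writing $\boldsymbol{\eta}=\Sigma^{-1}\boldsymbol{t}$ and shifting $\boldsymbol{y}=\boldsymbol{x}-\boldsymbol{t}$ gives $\mathbb{P}[\boldsymbol{X}\ge\boldsymbol{t}]=\varphi_{\boldsymbol{X}}(\boldsymbol{t})\,I$ with $I=\int_{\boldsymbol{y}\ge\boldsymbol{0}}e^{-\boldsymbol{\eta}^T\boldsymbol{y}}e^{-\frac12\boldsymbol{y}^T\Sigma^{-1}\boldsymbol{y}}\,d\boldsymbol{y}$, and $I\le\prod_i\eta_i^{-1}$ follows immediately from positive definiteness of $\Sigma^{-1}$. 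The lower bound, however, has a genuine gap that the sketch defers rather than resolves. First, the integration-by-parts remainders carry the prefactor $(\Sigma^{-1}\boldsymbol{y})_i$; since $\Sigma^{-1}$ can have negative off-diagonal entries, this prefactor changes sign on the positive orthant, so "bound the lingering Gaussian factor crudely by $1$" controls the remainder only from one side and is not valid uniformly. Second, and more seriously, the bookkeeping you postpone does not in fact close. Writing $\boldsymbol{q}$ for the entrywise inverse of $\boldsymbol{\eta}$, both the crude Taylor bound $e^{-u}\ge 1-u$ and a single pass of coordinate-wise IBP with the Gaussian replaced by $1$ produce the subtracted quantity $\tfrac12\bigl(\boldsymbol{q}^T\Sigma^{-1}\boldsymbol{q}+\sum_i(\Sigma^{-1})_{ii}q_i^2\bigr)$ rather than $\boldsymbol{q}^T\Sigma^{-1}\boldsymbol{q}$ — this is precisely the "extra factor of $2$ on the diagonal" you flag. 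These two expressions are \emph{not} equal and are in general incomparable (the former is larger exactly when $\sum_{i\neq j}(\Sigma^{-1})_{ij}q_iq_j<0$), so the symmetry of $\Sigma^{-1}$ alone does not make the remainder "collapse to exactly the stated quadratic form." Obtaining Savage's sharp constant requires a more careful argument than anything in the sketch, and for the purposes of this paper the correct move is to cite the reference, as the authors do.
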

\noindent We employ Theorem~\ref{thm:multiv-tail} when the coordinates of $X$ are nearly independent, i.e.\,$\Sigma$ is close to identity. The next auxiliary result we record is Slepian's Lemma~\cite{slepian1962one}.
\begin{lemma}\label{lemma:Slep}
        Let $X=(X_1,\dots,X_n)\in\R^n$ and $Y=(Y_1,\dots,Y_n)\in\R^n$ be two multivariate normal random vectors with (a) $\mathbb{E}[X_i]=\mathbb{E}[Y_i]=0$ and $\mathbb{E}[X_i^2]=\mathbb{E}[Y_i^2]$ for every $1\le i\le n$ and (b) $\mathbb{E}[X_iX_j]\le\mathbb{E}[Y_iY_j]$ for $1\le i<j\le m$. Fix any $c_1,\dots,c_n\in\R$. Then,
        \[
        \mathbb{P}\bigl[X_i\ge c_i,\forall i\bigr]\le 
        \mathbb{P}\bigl[Y_i\ge c_i,\forall i\bigr]
        \]
    \end{lemma}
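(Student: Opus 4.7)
I will establish Slepian's inequality via the classical Gaussian interpolation argument, reducing the comparison to a one-parameter family of multivariate normals whose off-diagonal covariances interpolate linearly between those of $X$ and $Y$, and then showing that the quantity of interest is monotone along this path.

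First, by a standard density argument, I may assume both covariance matrices $\Sigma^X$ and $\Sigma^Y$ are strictly positive definite (adding $\epsilon I$ and letting $\epsilon\to 0$ at the end). For $t\in[0,1]$, define $\Sigma(t)=(1-t)\Sigma^X+t\Sigma^Y$. Hypothesis (a) forces the diagonals of $\Sigma^X$ and $\Sigma^Y$ to coincide, so the diagonal of $\Sigma(t)$ is constant in $t$; hypothesis (b) gives $\frac{d}{dt}\Sigma(t)_{ij}=\Sigma^Y_{ij}-\Sigma^X_{ij}\ge 0$ for $i\ne j$. Let $Z(t)\sim\cN(0,\Sigma(t))$ (for the range of $t$ where $\Sigma(t)\succ 0$, which is preserved by convexity of the cone of PSD matrices) and set
\[
g(t) \;=\; \mathbb{P}\bigl[Z_i(t)\ge c_i,\ \forall i\bigr] \;=\; \int_{c_1}^{\infty}\!\!\cdots\!\int_{c_n}^{\infty} \varphi_{\Sigma(t)}(x)\,dx,
\]
where $\varphi_{\Sigma(t)}$ is the centered Gaussian density with covariance $\Sigma(t)$. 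The goal is to show $g'(t)\ge 0$ for all $t\in[0,1]$, whence $g(0)\le g(1)$, which is the asserted inequality.

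The heart of the argument is the identity
\[
\frac{\partial \varphi_{\Sigma}(x)}{\partial \Sigma_{ij}} \;=\; \frac{\partial^{2}\varphi_{\Sigma}(x)}{\partial x_i\,\partial x_j}\quad\text{for } i\ne j,
\]
which follows from a direct computation with the Gaussian density (equivalently, this is the Gaussian heat-kernel identity). Differentiating under the integral, and using that only off-diagonal entries of $\Sigma(t)$ vary with $t$,
\[
g'(t) \;=\; \sum_{i<j}\bigl(\Sigma^Y_{ij}-\Sigma^X_{ij}\bigr)\int_{c_1}^{\infty}\!\!\cdots\!\int_{c_n}^{\infty}\frac{\partial^{2}\varphi_{\Sigma(t)}}{\partial x_i\,\partial x_j}\,dx.
\]
For each $i<j$, I integrate the mixed second derivative explicitly in the $x_i$ and $x_j$ variables: $\int_{c_i}^{\infty}\!\int_{c_j}^{\infty}\partial^{2}\varphi/\partial x_i\partial x_j\,dx_i\,dx_j=\varphi_{\Sigma(t)}(x)\big|_{x_i=c_i,\,x_j=c_j}$, since the density (and its first partials) vanish at $+\infty$. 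The remaining integration over the orthant $\prod_{k\ne i,j}[c_k,\infty)$ is of the nonnegative function $\varphi_{\Sigma(t)}(\cdot)|_{x_i=c_i,x_j=c_j}$, hence nonnegative. Combined with the factor $\Sigma^Y_{ij}-\Sigma^X_{ij}\ge 0$ from hypothesis (b), each summand is nonnegative, giving $g'(t)\ge 0$ as required.

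The main technical obstacle I anticipate is the interchange of differentiation and integration on an unbounded domain and the boundary-vanishing step in the integration by parts: one must justify that $\partial\varphi_{\Sigma(t)}/\partial x_j\to 0$ as $x_i\to\infty$ uniformly enough to discard the boundary term, and that the interpolating path stays inside the positive-definite cone so that $\varphi_{\Sigma(t)}$ is genuinely smooth. Both are handled by the Gaussian tails (all partials decay super-polynomially) and by the initial regularization $\Sigma^X+\epsilon I$, $\Sigma^Y+\epsilon I$, which ensures strict positive-definiteness along the whole segment. After taking $\epsilon\to 0$ by dominated convergence (the integrand is bounded by the Gaussian density), the inequality extends to the general PSD case claimed in the lemma.
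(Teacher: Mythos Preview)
Your interpolation proof is correct and is the standard argument for Slepian's inequality. Note, however, that the paper does not actually prove this lemma: it is listed among the auxiliary results and simply attributed to Slepian~\cite{slepian1962one} without proof, so there is no ``paper's own proof'' to compare against. Your write-up supplies exactly the classical Gaussian-interpolation derivation one would find in a textbook treatment, with the appropriate care about positive-definiteness along the path and the boundary terms in the integration by parts.
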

The next auxiliary result regards the random $k$-SAT model.
\begin{lemma}\label{lemma:random-k-sat-prob}
    Let $\bs,\bs'\in\{0,1\}^n$ with $d_H(\bs,\bs')\ge \xi n$ for some $\xi\in(0,1)$. Suppose the clause $\mathcal{C}$ is a disjunction of $k$ literals $y_1,\dots,y_k$, that is  $\mathcal{C} = y_1\vee y_2 \vee \cdots \vee y_k$,     where $y_i$ are drawn independently and uniformly at random from $\{x_1,\dots,x_n,\bar{x}_1,\dots,\bar{x}_n\}$. Then,
    \[
    \mathbb{P}\bigl[\mathcal{C}(\bs)=\mathcal{C}(\bs') = 0\bigr] \le 2^{-k}(1-\xi)^k.
    \]
\end{lemma}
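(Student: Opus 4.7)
The plan is to analyze the event $\{\mathcal{C}(\bs)=\mathcal{C}(\bs')=0\}$ one literal at a time, exploiting the independence of $y_1,\dots,y_k$ and reducing to a simple counting estimate.

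First, observe that a disjunction evaluates to $0$ under an assignment precisely when \emph{every} literal evaluates to $0$. Thus $\{\mathcal{C}(\bs)=\mathcal{C}(\bs')=0\}$ is the intersection of the independent events $\{y_i \text{ evaluates to }0\text{ under both }\bs\text{ and }\bs'\}$ for $1\le i\le k$. By independence of the literals,
\[
\mathbb{P}\bigl[\mathcal{C}(\bs)=\mathcal{C}(\bs')=0\bigr] = \prod_{i=1}^k \mathbb{P}\bigl[y_i(\bs)=y_i(\bs')=0\bigr],
\]
so it suffices to bound the single-literal probability by $(1-\beta)/2$.

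Next, I would enumerate which literals of the $2n$ possible literals evaluate to $0$ under both assignments. For each index $j\in\{1,\dots,n\}$ with $\bs(j)=\bs'(j)$, exactly one of $\{x_j,\bar x_j\}$ evaluates to $0$ under both (namely $x_j$ if $\bs(j)=0$, and $\bar x_j$ if $\bs(j)=1$). For each index $j$ with $\bs(j)\ne \bs'(j)$, neither $x_j$ nor $\bar x_j$ can evaluate to $0$ under both, since the two literals on variable $x_j$ take opposite values on $\bs$ and $\bs'$. Hence the number of literals in $\{x_1,\dots,x_n,\bar x_1,\dots,\bar x_n\}$ that are jointly falsified by $\bs$ and $\bs'$ equals $n-d_H(\bs,\bs')$.

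Using the hypothesis $d_H(\bs,\bs')\ge\beta n$, this count is at most $n(1-\beta)$, and since $y_i$ is uniform over the $2n$ literals,
\[
\mathbb{P}\bigl[y_i(\bs)=y_i(\bs')=0\bigr]\le \frac{n(1-\beta)}{2n}=\frac{1-\beta}{2}.
\]
Multiplying over the $k$ independent literals yields the claimed bound $2^{-k}(1-\beta)^k$. There is no real obstacle here; the only step requiring a moment of thought is the bijective counting argument identifying the jointly falsifying literals with the set of indices where $\bs$ and $\bs'$ agree.
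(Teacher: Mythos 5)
Your proof is correct and takes essentially the same approach as the paper: both rest on the observation that a literal is jointly falsified by $\bs$ and $\bs'$ exactly when it is the falsified literal of a variable on which $\bs$ and $\bs'$ agree, so there are $n - d_H(\bs,\bs')$ such literals out of $2n$. The paper packages this by conditioning on the event that every literal in $\mathcal{C}$ lies on an agreeing variable and then multiplying by the conditional $2^{-k}$, whereas you multiply the per-literal probability $(n-d_H)/(2n)$ over the $k$ independent draws; these are two organizations of the same counting.
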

\begin{proof}[Proof of Lemma~\ref{lemma:random-k-sat-prob}]
    Fix $\bs,\bs'\in\{0,1\}^n$ with $d_H(\bs,\bs')=\xi n$, let $I=\{i\in[n]:\bs(i)\ne \bs'(i)\}$, so that $|I|\ge \xi n$. Note that if there is an $i$ such that $x_i\in \{y_1,\dots,y_k\}$ or $\bar{x}_i\in\{y_1,\dots,y_k\}$, then at least one of $\mathcal{C}(\bs)$ or $\mathcal{C}(\bs')$ is satisfied. Consequently, $\mathcal{C}(\bs)=\mathcal{C}(\bs')=0$ iff $\mathcal{C}$ consists of literals $x_i$/$\bar{x}_i$ for which $i\in [n]\setminus I$. Denote this event by $\mathcal{E}_g$. So,
    \begin{align*}
         \mathbb{P}\bigl[\mathcal{C}(\bs)=\mathcal{C}(\bs') = 0\bigr]  &=  \mathbb{P}\bigl[\mathcal{C}(\bs)=\mathcal{C}(\bs') = 0\mid \mathcal{E}_g\bigr] \mathbb{P}[\mathcal{E}_g] \\
         &=2^{-k}\cdot \frac{(n-d_H(\bs,\bs'))^k}{n^k} \\
         &\le 2^{-k}(1-\xi)^k. 
        \end{align*}
\end{proof}

\subsection{Proof of Theorem~\ref{thm:m-ogp-tight}}\label{sec:pf-m-ogp-tight}
We first observe that if $0<\eta'<\eta<\xi$ then $
\mathcal{S}_{\mathrm{p-spin}}(\gamma,m,\xi,\eta')\subseteq \mathcal{S}_{\mathrm{p-spin}}(\gamma,m,\xi,\eta)$. In particular, $\mathcal{S}_{\mathrm{p-spin}}(\gamma,m,\xi,\eta')\ne\varnothing\Rightarrow \mathcal{S}_{\mathrm{p-spin}}(\gamma,m,\xi,\eta)\ne\varnothing$. For this reason, we assume throughout that $\eta$ is sufficiently small.
\paragraph{Existence of $m$-Tuples with Constrained Overlaps} For any $m\in\N$, $0<\eta<\xi<1$, let
\begin{equation}\label{eq:overlap-set}
    \mathcal{F}(m,\xi,\eta):= \left\{\left(\bs^{(1)},\dots,\bs^{(m)}\right):\xi-\eta\le n^{-1}\ip{\bs^{(i)}}{\bs^{(j)}}\le \xi,1\le i<j\le m\right\}.
    \end{equation}
We first show that $\mathcal{F}(m,\xi,\eta)\ne\varnothing$ for all large enough $n$.
\begin{lemma}\label{lemma:F-NON-EMPTY}
    For any $m\in\N$, $0<\eta<\xi<1$, there exists $N_0\in \N$ such that $\mathcal{F}(m,\xi,\eta)\ne\varnothing$ for $n\ge N_0$. 
\end{lemma}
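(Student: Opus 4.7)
The plan is to use the probabilistic method with an i.i.d.\ biased product distribution on $\{-1,1\}^n$. Set $\rho \triangleq \xi - \eta/2$, the midpoint of the target interval $[\xi-\eta,\xi]$. Since $0<\eta<\xi<1$, we have $\rho \in (0,1)$, so $\alpha \triangleq (1+\sqrt{\rho})/2 \in (1/2,1)$ is a valid probability. I would sample $\bs^{(1)},\dots,\bs^{(m)}$ independently, with each coordinate of each $\bs^{(t)}$ drawn independently as $+1$ with probability $\alpha$ and $-1$ with probability $1-\alpha$. Intuitively, uniform sampling would not work because it concentrates overlaps near $0$, while we need them near a prescribed $\xi\in(0,1)$; biasing the coordinates shifts the expected overlap upward.

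For any fixed $i\ne j$, the products $\bs^{(i)}(k)\bs^{(j)}(k)$ for $1\le k\le n$ are i.i.d.\ bounded random variables in $\{-1,+1\}$ with
\[
\mathbb{E}\bigl[\bs^{(i)}(k)\bs^{(j)}(k)\bigr] = \mathbb{E}\bigl[\bs^{(i)}(k)\bigr]\,\mathbb{E}\bigl[\bs^{(j)}(k)\bigr] = (2\alpha-1)^2 = \rho.
\]
Hoeffding's inequality applied to the sum $\ip{\bs^{(i)}}{\bs^{(j)}}=\sum_k \bs^{(i)}(k)\bs^{(j)}(k)$ yields
\[
\mathbb{P}\!\left[\left|n^{-1}\ip{\bs^{(i)}}{\bs^{(j)}}-\rho\right| > \eta/2\right] \le 2e^{-n\eta^2/8}.
\]
Since $[\rho-\eta/2,\rho+\eta/2]=[\xi-\eta,\xi]$, a union bound over the $\binom{m}{2}$ pairs gives
\[
\mathbb{P}\bigl[(\bs^{(1)},\dots,\bs^{(m)})\in\mathcal{F}(m,\xi,\eta)\bigr] \ge 1 - 2\binom{m}{2}e^{-n\eta^2/8},
\]
which is strictly positive (indeed tends to $1$) for all $n$ at least some $N_0=N_0(m,\eta)$, proving $\mathcal{F}(m,\xi,\eta)\ne\varnothing$.

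There is no substantive obstacle here; the only mild subtlety is that i.i.d.\ biased sampling produces pairwise expected inner product $(2\alpha-1)^2$ rather than $2\alpha-1$, so the bias must be chosen by inverting this quadratic. This inversion always succeeds precisely because $\rho\in(0,1)$, which in turn is guaranteed by the hypotheses $\xi<1$ and $\eta<\xi$. One could equivalently phrase the proof in terms of Hamming distances: the target $d_H/n\in[(1-\xi)/2,(1-\xi+\eta)/2]$ is an interval in $(0,1/2)$, and the same biased construction produces the required nearly equidistant $m$-tuples.
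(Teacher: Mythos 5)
Your proof is correct and uses essentially the same approach as the paper: i.i.d.\ biased coordinate sampling with the bias chosen so that the squared mean magnetization $(2\alpha-1)^2$ lands inside $(\xi-\eta,\xi)$, followed by sub-Gaussian concentration (Hoeffding) and a union bound over the $\binom{m}{2}$ pairs. The only cosmetic difference is the parametrization of the target point in the interval (you use the midpoint $\xi-\eta/2$, the paper introduces a free $\delta$), which has no bearing on the argument.
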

\begin{proof}[Proof of Lemma~\ref{lemma:F-NON-EMPTY}]
    Our approach is through the \emph{probabilistic method}~\cite{alon2016probabilistic}. Choose $\delta>0$ such that $\xi-\eta<\xi-2\delta<\xi$ and $\xi-2\delta>0$, and let $p^*\in(0,1)$ satisfy
    \[
    (1-2p^*)^2 = \xi - \delta.
    \]
    As $\xi-\delta\in(0,1)$ such a $p^*$ indeed exists. We now assign coordinates of $\bs^{(i)}$ randomly. Specifically, let $\bs^{(i)}(k)$ be i.i.d.\,for $1\le i\le m$ and $1\le k\le n$ with distribution
    \[
    \mathbb{P}\bigl[\bs^{(i)}(k) = 1\bigr] = p^*\quad\text{and}\quad \mathbb{P}\bigl[\bs^{(i)}(k) = -1\bigr]=1-p^*.
    \]
    Observe that for fixed $1\le i<j\le m$, random variables $\bs^{(i)}(k)\bs^{(j)}(k),1\le k\le n$ are i.i.d.\,with
    \[
    \mathbb{E}[\bs^{(i)}(k)\bs^{(j)}(k)] = (p^*)^2 + (1-p^*)^2 -2p^*(1-p^*) = (1-2p^*)^2 = \xi -\delta.
    \]
    In particular, $\ip{\bs^{(i)}}{\bs^{(j)}}$ is a sum of i.i.d.\,binary random variables with $n^{-1}\mathbb{E}[\ip{\bs^{(i)}}{\bs^{(j)}}] = \xi-\delta$.  Define next sequence of events
    \[
    \mathcal{E}_{ij} = \{n^{-1}\ip{\bs^{(i)}}{\bs^{(j)}}\in [\xi-2\delta,\xi]\},\quad 1\le i<j\le m.
    \]
    Using standard concentration results for sum of i.i.d.\,sub-Gaussian random variables (see e.g.\,\cite{vershynin2018high}) we obtain that there is a $C>0$ such that for any $\delta>0$ and $n$, 
    \[
\mathbb{P}\left[\left|\frac1n\sum_{1\le k\le n}\bs^{(i)}(k)\bs^{(j)}(k)-(\xi-\delta)\right|\le \delta\right]\ge 1-\exp\bigl(-Cn\delta^2\bigr).
    \]
So, 
\[
\mathbb{P}\left[\bigcap_{1\le i,j\le m}\mathcal{E}_{ij}\right]\ge 1-\binom{m}{2}e^{-Cn\delta^2}>0,
\]
for $n$ large enough. This establishes the existence of $\bs^{(i)}\in\Sigma_n,1\le i\le m$ with $n^{-1}\ip{\bs^{(i)}}{\bs^{(j)}}\in [\xi-2\delta,\xi]\subset[\xi-\delta,\xi]$ for $1\le i<j\le m$, and completes the proof.
\end{proof}
Equipped with Lemma~\ref{lemma:F-NON-EMPTY}, fix a $\bs^*\in\Sigma_n$ and let 
\begin{equation}\label{eq:L_term}
    L :=L(m,\xi,\eta) = \bigl|\left\{(\bs^{(i)}:i\le m):\bs^{(1)}=\bs^*,n^{-1}\ip{\bs^{(i)}}{\bs^{(j)}}\in[\xi-\eta,\xi],1\le i<j\le m\right\}\bigr|.
\end{equation}
From symmetry, (a) $L$ is also the number of $m$-tuples $(\bs^{(i)}:i\le m)\in\mathcal{F}(m,\xi,\eta)$ with $k\mathrm{th}$ coordinate fixed at $\bs^*$, $\bs^{(k)}=\bs^*$ for any arbitrary $k$, and (b) $|\mathcal{F}(m,\xi,\eta)| =2^n\cdot L$, so $L\ge 1$ for large $n$ by Lemma~\ref{lemma:F-NON-EMPTY}.

\paragraph{An Auxiliary Random Variable} Next, define 
\begin{equation}\label{eq:auxil-rv-T}
    T_{m,\xi,\eta}:= \max_{(\bs^{(i)}:i\le m)\in\mathcal{F}(m,\xi,\eta)} \min_{1\le j\le m}H(\bs^{(j)}).
\end{equation}
We establish a concentration property for $T_{m,\eta,\xi}$.
\begin{lemma}\label{lemma:T-concen}
For any $t\ge 0$, 
\[
\mathbb{P}\bigl[\bigl|T_{m,\xi,\eta}-\mathbb{E}[T_{m,\xi,\eta}]\bigr|\ge t\bigr] \le 2\exp\left(-\frac{t^2 n}{2}\right).
\]
\end{lemma}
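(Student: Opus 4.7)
}
The strategy is to recognize $T_{m,\xi,\eta}$ as a Lipschitz function of the disorder $\boldsymbol{J}$ and then to invoke Gaussian concentration, specifically the Borell--Tsirelson--Ibragimov--Sudakov inequality: if $F:\R^D\to\R$ is $L$-Lipschitz with respect to the Euclidean norm and $\boldsymbol{Z}\sim\cN(0,I_D)$, then $\mathbb{P}[F(\boldsymbol{Z})-\mathbb{E}[F(\boldsymbol{Z})]\ge t]\le\exp(-t^2/(2L^2))$. Viewing $\boldsymbol{J}\in\R^{n^p}$ as a flattened vector of i.i.d.\ standard normals, this reduces the lemma to bounding the Euclidean Lipschitz constant of $\boldsymbol{J}\mapsto T_{m,\xi,\eta}(\boldsymbol{J})$ by $n^{-1/2}$.

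To compute that Lipschitz constant, I would first observe that for any fixed $\bs\in\Sigma_n$, the Hamiltonian $H_{n,p}(\bs)=n^{-(p+1)/2}\ip{\boldsymbol{J}}{\bs^{\otimes p}}$ is \emph{linear} in $\boldsymbol{J}$ with gradient of Euclidean norm
$$
\|\nabla_{\boldsymbol{J}} H_{n,p}(\bs)\|_2 \;=\; n^{-(p+1)/2}\,\|\bs^{\otimes p}\|_2 \;=\; n^{-(p+1)/2}\cdot n^{p/2} \;=\; n^{-1/2},
$$
since $\bs\in\{-1,1\}^n$ has $\|\bs\|_2=\sqrt{n}$ and $\|\bs^{\otimes p}\|_2=\|\bs\|_2^p$. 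Thus, for each $\bs$, the map $\boldsymbol{J}\mapsto H_{n,p}(\bs)$ is $n^{-1/2}$-Lipschitz. A standard two-line argument shows that a pointwise finite minimum (resp.\ maximum) of $L$-Lipschitz functions is again $L$-Lipschitz, since $\min_j f_j(x)\le \min_j f_j(y)+L\|x-y\|$ follows from $f_j(x)\le f_j(y)+L\|x-y\|$ by taking minima, and symmetrically. Applying this to the inner $\min$ and then to the outer $\max$ over the (finite) set $\mathcal{F}(m,\xi,\eta)$ yields that $\boldsymbol{J}\mapsto T_{m,\xi,\eta}(\boldsymbol{J})$ is also $n^{-1/2}$-Lipschitz.

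Plugging $L=n^{-1/2}$ into the Borell--TIS bound delivers exactly
$\mathbb{P}[T_{m,\xi,\eta}-\mathbb{E}[T_{m,\xi,\eta}]\ge t]\le \exp(-t^2 n/2)$,
as claimed; existence and finiteness of $\mathbb{E}[T_{m,\xi,\eta}]$ is automatic because $T_{m,\xi,\eta}$ is a max--min over finitely many centered Gaussians with uniformly bounded variances. There is no substantive obstacle here; the proof is a clean application of Gaussian concentration. The only care required is the correct accounting of the $n^{-(p+1)/2}$ normalization when computing the gradient, and the verification that taking min and max over a finite collection preserves the Lipschitz constant, both of which are standard.
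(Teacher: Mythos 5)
Your proposal is correct and matches the paper's argument essentially verbatim: both establish that $\boldsymbol{J}\mapsto T_{m,\xi,\eta}(\boldsymbol{J})$ is $n^{-1/2}$-Lipschitz (the paper via Cauchy--Schwarz applied to the linear map $H(\bs,\boldsymbol{J})=n^{-(p+1)/2}\ip{\boldsymbol{J}}{\bs^{\otimes p}}$, you via the equivalent gradient-norm computation), propagate the Lipschitz constant through the finite $\min$ and $\max$, and then invoke Gaussian concentration for Lipschitz functions. No gaps.
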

\begin{proof}[Proof of Lemma~\ref{lemma:T-concen}]
Throughout the proof, we make dependence on disorder $\boldsymbol{J}$ explicit. Given $\boldsymbol{J}\in(\R^n)^{\otimes p}$, we view $T_{m,\xi,\eta}$ as a map $T_{m,\xi,\eta}(\boldsymbol{J}):\R^{n^p}\to \R$ and establish that it is Lipschitz (w.r.t.\,$\ell_2$ norm):
    \begin{equation}\label{eq:T-is-Lip}
        \bigl|T_{m,\xi,\eta}(\boldsymbol{J}) -T_{m,\xi,\eta}(\boldsymbol{J}') \bigr|\le n^{-\frac12}\|\boldsymbol{J}-\boldsymbol{J}'\|_2.
    \end{equation}
    Lemma~\ref{lemma:T-concen} then follows from standard concentration results for Lipschitz functions of Gaussian random variables (see e.g.~\cite[Theorem~2.26]{wainwright2019high}). To that end, fix any $\bs$ and recall that
    \[
    H(\bs,\boldsymbol{J}) = n^{-\frac{p+1}{2}}\ip{\boldsymbol{J}}{\bs^{\otimes p}}. 
    \]
    Using Cauchy-Schwarz inequality and the fact $\|\bs^{\otimes p}\|_2=n^{p/2}$ for any $\bs\in\Sigma_n$, we obtain
    \begin{equation}\label{eq:ham-lip}
           \bigl|H(\bs,\boldsymbol{J}) - H(\bs,\boldsymbol{J}')\bigr| \le n^{-\frac{p+1}{2}}\|\boldsymbol{J}-\boldsymbol{J}'\|_2 \|\bs^{\otimes p}\|_2 = n^{-\frac12}\|\boldsymbol{J}-\boldsymbol{J}'\|_2.
        \end{equation}
Now, fix any $\Xi = (\bs^{(i)}:i\le m)\in\mathcal{F}(m,\xi,\eta)$ and let $L(\Xi,\boldsymbol{J}) = \min_{1\le j\le m}H(\bs^{(j)},\boldsymbol{J})$. We have 
\[
L(\Xi,\boldsymbol{J}) \le H(\bs^{(j)},\boldsymbol{J}) \le H(\bs^{(j)},\boldsymbol{J}') +n^{-\frac12}\|\boldsymbol{J}-\boldsymbol{J}'\|_2,
\]
where we used~\eqref{eq:ham-lip} in the last step. Taking a minimum over $1\le j\le m$ and exchanging $\boldsymbol{J}$ with $\boldsymbol{J}'$ afterwards, we obtain 
\begin{equation}\label{eq:L-lip}
   \bigl| L(\Xi,\boldsymbol{J}) - L(\Xi,\boldsymbol{J}')\bigr| \le n^{-\frac12}\|\boldsymbol{J}-\boldsymbol{J}'\|_2.
\end{equation}
Using~\eqref{eq:L-lip}, we have
\begin{align*}
    L(\Xi,\boldsymbol{J})&\le L(\Xi,\boldsymbol{J}')+n^{-\frac12}\|\boldsymbol{J}-\boldsymbol{J}'\|_2 \\
    &\le \max_{\Xi\in\mathcal{F}(m,\xi,\eta)}L(\Xi,\boldsymbol{J}')+n^{-\frac12}\|\boldsymbol{J}-\boldsymbol{J}'\|_2\\
    &= T_{m,\xi,\eta}(\boldsymbol{J}') + n^{-\frac12}\|\boldsymbol{J}-\boldsymbol{J}'\|_2.
\end{align*}
Taking a maximum over $\Xi\in\mathcal{F}(m,\xi,\eta)$ and reversing the roles of $\boldsymbol{J}$ and $\boldsymbol{J}'$, we establish~\eqref{eq:T-is-Lip}. 
\end{proof}
\paragraph{Bounds on Moments of $|\mathcal{S}_{\mathrm{p-spin}}(\gamma,m,\xi,\eta)|$} We establish a lower bound on first moment of $|\mathcal{S}_{\mathrm{p-spin}}(\gamma,m,\xi,\eta)|$.
\begin{proposition}
    \label{prop:S_1st_Mom}
 For any $m\in\N$,  $\gamma<1/\sqrt{m}$ and $0<\eta<\xi<1$,
 \[
\mathbb{E}\bigl[|\mathcal{S}_{\mathrm{p-spin}}(\gamma,m,\xi,\eta)|\bigr]\ge L\cdot \exp_2\bigl(n-m\gamma^2 n +o(n)\bigr),
\]
for $L$ in~\eqref{eq:L_term}. In particular, $\mathbb{E}\bigl[|\mathcal{S}_{\mathrm{p-spin}}(\gamma,m,\xi,\eta)|\bigr]=\exp(\Theta(n))$ for $\gamma<1/\sqrt{m}$.
\end{proposition}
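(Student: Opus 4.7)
The plan is to use linearity of expectation together with the multivariate Gaussian tail estimate of Theorem~\ref{thm:multiv-tail}. Writing
\[
\mathbb{E}\bigl[|S(\gamma,m,\xi,\eta)|\bigr] = \sum_{(\bs^{(i)}) \in \mathcal{F}(m,\xi,\eta)} \mathbb{P}\bigl[H(\bs^{(t)}) \geq \gamma\sqrt{2\ln 2},\ \forall\, 1\le t\le m\bigr],
\]
and using the symmetry $|\mathcal{F}(m,\xi,\eta)| = 2^n L$ recorded in the excerpt, the task reduces to showing that for every tuple in $\mathcal{F}(m,\xi,\eta)$ the summand is at least $\exp_2(-m\gamma^2 n + o(n))$, \emph{uniformly} over the choice of tuple.

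Fix such a tuple and consider $\boldsymbol{X} = \bigl(\sqrt{n}\,H(\bs^{(1)}), \ldots, \sqrt{n}\,H(\bs^{(m)})\bigr)$. A direct computation from~\eqref{eq:Hamiltonian-p-spin} shows that $\boldsymbol{X}$ is centered multivariate normal with covariance $\Sigma$ satisfying $\Sigma_{tt} = 1$ and $\Sigma_{t\ell} = (n^{-1}\ip{\bs^{(t)}}{\bs^{(\ell)}})^p$ for $t \ne \ell$. Since all overlaps lie in $[\xi-\eta,\xi] \subset (0,1)$, the off-diagonal entries are bounded in absolute value by $\xi^p$ uniformly over the tuple. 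Consequently, once $p$ is sufficiently large relative to $m,\xi,\eta$, the matrix $\Sigma$ is invertible, $|\Sigma|=1+o(1)$, and $\Sigma^{-1} = I + O(m\xi^p)$ entrywise. Setting $\boldsymbol{t} = \gamma\sqrt{2n\ln 2}\,\boldsymbol{1}_m$, the vector $\Sigma^{-1}\boldsymbol{t}$ is coordinate-wise positive (close to $\boldsymbol{t}$), so the hypotheses of Theorem~\ref{thm:multiv-tail} hold, and the target event $\{H(\bs^{(t)}) \ge \gamma\sqrt{2\ln 2},\forall t\}$ is exactly $\{\boldsymbol{X}\ge \boldsymbol{t}\}$.

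I then expand the three factors in the lower bound of Theorem~\ref{thm:multiv-tail}. The prefactor $1-\ip{1/(\Sigma^{-1}\boldsymbol{t})}{\Sigma^{-1}(1/(\Sigma^{-1}\boldsymbol{t}))} = 1 - O(m/n)$ and the product $\prod_{i\le m}\ip{e_i}{\Sigma^{-1}\boldsymbol{t}}$, being of size $(\gamma\sqrt{2n\ln 2})^m(1+o(1))$, each contribute only $e^{o(n)}$. For the density, the essential identity is
\[
\boldsymbol{t}^T \Sigma^{-1} \boldsymbol{t} = 2n\gamma^2 \ln 2\cdot \boldsymbol{1}^T\Sigma^{-1}\boldsymbol{1} = 2mn\gamma^2\ln 2 + O\bigl(m^2 \xi^p\, n\bigr),
\]
whose error is $o(n)$ for large $p$. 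Hence $\varphi_{\boldsymbol{X}}(\boldsymbol{t}) \ge \exp_2(-m\gamma^2 n + o(n))$. Summing the $|\mathcal{F}(m,\xi,\eta)|=2^n L$ copies of this uniform bound gives $\mathbb{E}[|S(\gamma,m,\xi,\eta)|]\ge L\cdot \exp_2(n-m\gamma^2 n + o(n))$, as claimed. The ``in particular'' conclusion is then immediate: $\gamma<1/\sqrt{m}$ yields $1-m\gamma^2>0$, and Lemma~\ref{lemma:F-NON-EMPTY} provides $L\ge 1$ for all large $n$.

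The main obstacle is controlling the $o(n)$ correction uniformly over $\mathcal{F}(m,\xi,\eta)$, i.e.\ showing that every single term in the sum enjoys the same exponential lower bound. This is enabled precisely by the uniform bound $|\Sigma_{t\ell}|\le \xi^p$ on off-diagonals: the decoupling effect of large $p$ (making the $H(\bs^{(i)})$ nearly independent) drives all three factors of Theorem~\ref{thm:multiv-tail} to their ``diagonal'' values at the required exponential scale, and this happens uniformly across tuples. Once this uniformity is secured, the remainder of the proof is bookkeeping.
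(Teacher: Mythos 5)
Your route through Theorem~\ref{thm:multiv-tail} is genuinely different from the paper's (which uses Slepian's Lemma, Lemma~\ref{lemma:Slep}), and it has a real gap. You estimate
\[
\boldsymbol{t}^T \Sigma^{-1}\boldsymbol{t} = 2mn\gamma^2\ln 2 + O\bigl(m^2\xi^p n\bigr),
\]
and then assert that the error is ``$o(n)$ for large $p$.'' This conflates the two asymptotic regimes: for any fixed $p$ (which is the setting of the Proposition), $m^2\xi^p n$ is $\Theta(n)$ --- a linear function of $n$ with a small but positive constant --- not $o(n)$. Taking $p$ large shrinks the constant, but the term never becomes sublinear in $n$. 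Worse, you never control the \emph{sign} of this error. If $\boldsymbol{1}^T\Sigma^{-1}\boldsymbol{1}$ came out \emph{larger} than $m$, your bound on $\varphi_{\boldsymbol{X}}(\boldsymbol{t})$ would have a leading exponent strictly smaller than $-m\gamma^2 n$, and the claimed inequality $\mathbb{E}[|S|]\ge L\exp_2(n-m\gamma^2 n+o(n))$ would fail. Your argument therefore only delivers the weaker bound $\exp_2\bigl(n - m\gamma^2 n - O(\xi^p)n + o(n)\bigr)$ with an undetermined sign on the $O(\xi^p)n$ correction.

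What saves the Proposition is the sign of the correlations, which the paper exploits directly via Slepian. Since $0<\eta<\xi<1$, every pairwise overlap in $\mathcal{F}(m,\xi,\eta)$ lies in $[\xi-\eta,\xi]\subset(0,1)$, so $\Sigma_{t\ell}=(n^{-1}\ip{\bs^{(t)}}{\bs^{(\ell)}})^p>0$: the Gaussians $Z_i$ are \emph{nonnegatively} correlated for every $p\ge 2$. Lemma~\ref{lemma:Slep} (applied against an i.i.d.\ vector) then gives
\[
\mathbb{P}\bigl[H(\bs^{(i)})\ge \gamma\sqrt{2\ln 2},\forall i\bigr] \ge \mathbb{P}\bigl[\cN(0,1)\ge \gamma\sqrt{2n\ln 2}\bigr]^m = \exp_2(-m\gamma^2 n + o(n))
\]
directly and uniformly over tuples, with no $p$-dependent correction and hence for \emph{all} $p$, matching the Proposition as stated. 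If you insist on the multivariate tail route, you would need to additionally argue that $\boldsymbol{1}^T\Sigma^{-1}\boldsymbol{1}\le m$ when all off-diagonal entries of $\Sigma$ are nonnegative (so the correction has the right sign); this is essentially Slepian's inequality in disguise, so the comparison-lemma approach is the cleaner one here.
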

\begin{proof}[Proof of Proposition~\ref{prop:S_1st_Mom}]
    We have 
\begin{equation}\label{eq:1ST_M_Exp}
        |\mathcal{S}_{\mathrm{p-spin}}(\gamma,m,\xi,\eta)| = \sum_{(\bs^{(i)}:i\le m)\in\mathcal{F}(m,\xi,\eta)}\ind\left\{H(\bs^{(i)})\ge \gamma\sqrt{2\ln 2},\forall i\right\}.
        \end{equation}
    Now, fix any $(\bs^{(i)}:i\le m)\in\mathcal{F}(m,\xi,\eta)$ and let $Z_i=\sqrt{n}H(\bs^{(i)}),1\le i\le n$. We have $Z_i\sim \cN(0,1)$ for $1\le i\le m$. Furthermore, a simple calculation shows
    \[
    \mathbb{E}[Z_iZ_j] = \left(n^{-1}\ip{\bs^{(i)}}{\bs^{(j)}}\right)^p \in [(\xi-\eta)^p,\xi^p].
    \]
    Let now $Z_i'\sim \cN(0,1)$ be i.i.d. Using the fact $\xi-\eta>0$, we apply Slepian's Lemma, Lemma~\ref{lemma:Slep}, to random vectors $(Z_1,\dots,Z_n)$ and $(Z_1',\dots,Z_n')$ to obtain
\begin{equation}\label{eq:1st--mom-p-bd}
\mathbb{P}\bigl[H(\bs^{(i)})\ge \gamma\sqrt{2\ln 2},\forall i\bigr] \ge \mathbb{P}\bigl[Z_i'\ge \gamma\sqrt{2n \ln 2},\forall i\bigr]=(\mathbb{P}[Z_1'\ge \gamma\sqrt{2n \ln 2}])^m \ge \exp_2\bigl(-nm\gamma^2+o(n)\bigr),
    \end{equation}
    where we used the well-known Gaussian tail bound
    \[
    \mathbb{P}[\cN(0,1)\ge x]\ge \frac{\exp(-x^2/2)}{\sqrt{2\pi}}\left(\frac1x-\frac{1}{x^3}\right)
    \]
    with $x=\gamma\sqrt{2n\ln 2}$. We now combine~\eqref{eq:1ST_M_Exp},~\eqref{eq:1st--mom-p-bd} and the fact $|\mathcal{F}(m,\xi,\eta)| = 2^n\cdot L$ per discussion below~\eqref{eq:L_term}, and establish Proposition~\ref{prop:S_1st_Mom} via linearity of expectation.
\end{proof}

We next establish the following proposition using the \emph{second moment method}.
\begin{proposition}\label{prop:2nd-moment}
    For any $m\in\N$, $0<\gamma<1/\sqrt{m}$ and $0<\eta<\xi<1$, there exists a $P^*\in \N$ and a function $\Delta_p:=\Delta_p(m,\xi,\eta)>0$, depending only on $p,m,\gamma,\xi$ with the property that $\Delta_p\to 0$ as $p\to\infty$ (for fixed $m,\gamma,\xi$), such that the following holds. Fix any $p\ge P^*$. Then, as $n\to\infty$,
    \[
\mathbb{P}\bigl[\bigl|\mathcal{S}_{\mathrm{p-spin}}(\gamma,m,\xi,\eta)\bigr|\ge 1\bigr]\ge \exp\left(-\frac{2mn\gamma^2 \Delta_p}{1+\Delta_p}+o(n)\right).
    \]
\end{proposition}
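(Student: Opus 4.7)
The plan is to apply the Paley--Zygmund inequality (Lemma~\ref{lemma:PZ}) with $\theta = 1/\mathbb{E}[Z]$ to the nonnegative random variable $Z \triangleq |S(\gamma,m,\xi,\eta)|$. Since $\mathbb{E}[Z] = \exp(\Theta(n))$ by Proposition~\ref{prop:S_1st_Mom}, the $(1-\theta)^2$ factor is $1-o(1)$, and the claim reduces to establishing the second-moment bound
$$\mathbb{E}[Z^2] \le \mathbb{E}[Z]^2 \cdot \exp\!\bigl(\tfrac{2mn\gamma^2\Delta_p}{1+\Delta_p} + o(n)\bigr)$$
for some $\Delta_p=\Delta_p(m,\gamma,\xi,\eta)\to 0$ as $p\to\infty$.

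Expanding and exchanging sum with expectation,
$$\mathbb{E}[Z^2] = \sum_{\Xi, \Xi' \in \mathcal{F}(m,\xi,\eta)} \mathbb{P}\bigl[\Xi,\Xi' \in S(\gamma,m,\xi,\eta)\bigr],$$
with $\Xi=(\bs^{(i)})_i$ and $\Xi'=(\bs'^{(j)})_j$. For a fixed pair, the rescaled vector $Y \triangleq \sqrt{n}\bigl(H(\bs^{(i)}), H(\bs'^{(j)})\bigr)_{i,j\le m} \in \R^{2m}$ is a centered Gaussian with covariance $\Sigma = I + E$: the off-diagonal entries of $E$ are $p$-th powers of normalized pairwise overlaps, with within-tuple entries in $[(\xi-\eta)^p,\xi^p]$ and cross-tuple entries $Q_{ij}^p$, where $Q_{ij}=n^{-1}\langle \bs^{(i)},\bs'^{(j)}\rangle \in [-1,1]$. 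I would then group the sum by the cross-overlap matrix $Q=(Q_{ij})_{i,j\le m}$.

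For each group with $\max_{i,j}|Q_{ij}|\le 1-\epsilon$, one has $\|E\|_2\le \Delta_p$ for some $\Delta_p(\epsilon)\to 0$ as $p\to\infty$, and Theorem~\ref{thm:multiv-tail} with threshold $\gamma\sqrt{2n\ln 2}\,\mathbf{1}$ yields
$$\mathbb{P}[\Xi,\Xi'\in S] \le n^{O(1)}\cdot \exp\!\bigl(-n\gamma^2\ln 2\cdot \mathbf{1}^\top \Sigma^{-1}\mathbf{1}\bigr).$$
A Neumann series expansion of $\Sigma^{-1}=(I+E)^{-1}$ combined with $\|E\|_2\le\Delta_p$ then gives $\mathbf{1}^\top \Sigma^{-1}\mathbf{1}\ge 2m/(1+\Delta_p)$. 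Summing the resulting per-tuple bound over $\Xi'$ (using entropy estimates in the spirit of Lemma~\ref{lemma:COUN} to count tuples realizing each cross-overlap pattern, a sum dominated by $Q\approx 0$) and then over $\Xi$ with $|\mathcal{F}(m,\xi,\eta)|\le e^{n+o(n)}$, one arrives at $\mathbb{E}[Z^2]\le \exp(2n - 2mn\gamma^2/(1+\Delta_p) + o(n))$ after absorbing $\ln 2$ into a redefinition of $\Delta_p$. Dividing by $\mathbb{E}[Z]^2\ge \exp(2n - 2mn\gamma^2 + o(n))$ (Proposition~\ref{prop:S_1st_Mom}) yields the claim.

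\textbf{Main obstacle.} The delicate point is handling cross-overlap matrices $Q$ with some $|Q_{ij}|$ close to $\pm 1$, i.e., two spin configurations nearly coincide: here $\Sigma$ becomes nearly singular and the Neumann estimate breaks down, with the per-tuple probability possibly as large as the marginal $\mathbb{P}[\Xi\in S]$. The number of such atypical $\Xi'$ is exponentially smaller (by Lemma~\ref{lemma:COUN}), and this combinatorial scarcity must be shown to dominate the per-tuple blow-up. The trade-off succeeds precisely because $\gamma<1/\sqrt{m}$ is strict; carefully quantifying it so that the bad contribution is $o(n)$ inside the exponent constitutes the technical core of the proof.
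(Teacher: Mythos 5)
Your proposal follows essentially the same route as the paper: Paley--Zygmund reduces the claim to a second-moment bound, you split the pairs $(\Xi,\Xi')$ by whether every cross-overlap $Q_{ij}$ is bounded away from $\pm 1$, apply Savage's multivariate tail estimate (Theorem~\ref{thm:multiv-tail}) with a Neumann-series control of $\Sigma^{-1}$ for the well-separated region, and invoke an entropy count for the nearly-coincident region. This is exactly the paper's decomposition into $\Sigma_d$ and $\sum_{i,j}\Sigma_{ij}(\epsilon^*)$, and your spectral bound on eigenvalues of $I+E$ plays the role of the Wielandt--Hoffman step.

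The one place you stop short of a proof is the ``main obstacle'' you flag: the bad region, where some $|Q_{ij}|\to 1$. You correctly observe that the per-pair probability there degrades to the single-tuple bound $\mathbb{P}[\Xi\in S]$ and that the scarcity of such $\Xi'$ (a factor $\le 2^{n h_b(\epsilon)}$ rather than $2^n$) must win, and you correctly attribute the win to the strict inequality $\gamma<1/\sqrt{m}$. What is missing is the concrete ingredient that closes the loop: a quantitative upper bound on $\mathbb{P}[\Xi\in S]$ of the form $\exp_2\bigl(-nm\gamma^2/(1+2mp\xi^p)+O(\log n)\bigr)$ (the paper's Lemma~\ref{lemma:prob_term}), which accounts for the within-tuple correlations at overlap $\approx\xi$ and degenerates gracefully as $p\to\infty$. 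With that bound, choosing $\epsilon^*$ so that $h_b(\epsilon^*)<1-m\gamma^2$ and $p$ large enough that $2m^2\gamma^2 p\xi^p/(1+2mp\xi^p)$ is negligible yields the ratio $\Sigma_{ij}(\epsilon^*)/\mathbb{E}[M]^2=2^{-\Theta(n)}$ exactly as the paper argues. So your strategy is right and complete in outline; the omitted piece is the single-tuple probability upper bound, without which the bad-region comparison is left as an assertion rather than an estimate.
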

\begin{proof}[Proof of Proposition~\ref{prop:2nd-moment}]
Fix $m\in\N$, $\gamma<1/\sqrt{m}$, $0<\eta<\xi<1$, where $\eta$ is as small as needed (see the remark at the beginning of the proof). For convenience, let
\begin{equation}\label{eq:COUNT-RV}
    M := |\mathcal{S}_{\mathrm{p-spin}}(\gamma,m,\xi,\eta)| = \sum_{\Xi \in\mathcal{F}(m,\xi,\eta)} \ind\bigl\{H(\bs^{(i)})\ge \gamma\sqrt{2\ln 2}, 1\le i\le m\bigr\},
\end{equation}
where 
\begin{equation}\label{eq:XI_Term}
    \Xi = (\bs^{(i)}:1\le i\le m)\in\mathcal{F}(m,\xi,\eta).
\end{equation}
Observe that for $\bar{\Xi}=(\bar{\bs}^{(i)}:1\le i\le m)\in\mathcal{F}(m,\xi,\eta)$, we have
\begin{equation}\label{eq:2nd_Mom_Exp}
    \mathbb{E}[M^2] = \sum_{\Xi,\bar{\Xi}\in\mathcal{F}(m,\xi,\eta)}\mathbb{P}\bigl[H(\bs^{(i)})\ge \gamma\sqrt{2\ln 2},H(\bar{\bs}^{(i)})\ge \gamma\sqrt{2\ln 2}, 1\le i\le m\bigr]
\end{equation}
Since $\gamma<1/\sqrt{m}$, there exists an $\epsilon^*\in(0,\frac12)$ such that
\begin{equation}\label{eq:Eps-star}
    h_b(\epsilon^*)<1-m\gamma^2.
\end{equation}
Our goal is to carefully upper bound~\eqref{eq:2nd_Mom_Exp} using a crucial overcounting idea developed in~\cite{gamarnik2021algorithmic}.
\paragraph{Overcounting} For $\epsilon^*$ in~\eqref{eq:Eps-star}, let
\begin{equation}\label{eq:Sigma-ij}
    \Sigma_{ij}(\epsilon^*):= \sum_{\substack{\Xi,\bar{\Xi}\in\mathcal{F}(m,\xi,\eta) \\ n^{-1}d_H(\bs^{(i)},\bar{\bs}^{(j)})\in[0,\epsilon^*]\cup[1-\epsilon^*,1]}} \mathbb{P}\bigl[H(\bs^{(t)})\ge \gamma\sqrt{2\ln 2},H(\bar{\bs}^{(t)})\ge \gamma\sqrt{2\ln 2}, 1\le t\le m\bigr].
\end{equation}
and
\begin{equation}\label{eq:Sigma_D}
    \Sigma_d :=  \sum_{\substack{\Xi,\bar{\Xi}\in\mathcal{F}(m,\xi,\eta) \\ n^{-1}d_H(\bs^{(i)},\bar{\bs}^{(j)})\in[\epsilon^*,1-\epsilon^*],\forall i,j}} \mathbb{P}\bigl[H(\bs^{(t)})\ge \gamma\sqrt{2\ln 2},H(\bar{\bs}^{(t)})\ge \gamma\sqrt{2\ln 2}, 1\le t\le m\bigr].
\end{equation}
Observe the bound following from overcounting $m$-tuples:
\begin{equation}\label{eq:overcounting}
    \mathbb{E}[M^2] \le \sum_{1\le i,j\le m}\Sigma_{ij}(\epsilon^*) + \Sigma_d.
\end{equation}
\paragraph{The $\Sigma_{ij}(\epsilon^*)$ Term} We now show $\Sigma_{ij}(\epsilon^*)/\mathbb{E}[M]^2 = \exp(-\Theta(n))$. To that end, we establish the following.
\begin{lemma}\label{lemma:prob_term}
Suppose $m\in\N$ and $0<\eta<\xi<1$ such that $\eta$ is small enough. Then, for $\Xi$ in~\eqref{eq:XI_Term},
    \[
    \sup_{\Xi\in\mathcal{F}(m,\xi,\eta)} \mathbb{P}\bigl[H(\bs^{(i)})\ge \gamma\sqrt{2\ln 2},1\le i\le m\bigr]\le \exp_2\left(-n\frac{m\gamma^2}{1+2mp\xi^p}+O(\log_2 n)\right).
    \]
\end{lemma}
\begin{proof}[Proof of Lemma~\ref{lemma:prob_term}]
Fix
\[
\Xi :=(\bs^{(i)}:1\le i\le m)\in\mathcal{F}(m,\xi,\eta)\quad\text{with}\quad \frac1n \ip{\bs^{(k)}}{\bs^{(\ell)}} = \xi-\eta_{k\ell},
\]
where $1\le k<\ell \le m$. Denote by $\Sigma(\boldsymbol{\eta})$ the covariance matrix of $(\sqrt{n}H(\bs^{(i)}):1\le i\le m)\in\R^m$, where $\boldsymbol{\eta} = (\eta_{k\ell}:1\le k<\ell \le m)$.
\begin{lemma}{\cite[Lemma~3.13(d)]{gamarnik2023shattering}}\label{lemma:From-GJK}
    \begin{align*}
    & \mathbb{P}\left[\min_{1\le i\le m}H(\bs^{(i)})\ge \gamma\sqrt{2\ln 2}\right]\\
    &\le n^{\frac{m}{2}}\left(\gamma\sqrt{\frac{\ln 2}{\pi}}\right)^m\left(\prod_{1\le i\le m}\ip{e_i}{\Sigma(\boldsymbol{\eta})^{-1}\boldsymbol{1}}\right)\bigl|\Sigma(\boldsymbol{\eta})\bigr|^{-\frac12}\exp_2\left(-\gamma^2 n\boldsymbol{1}^T \cdot \Sigma(\boldsymbol{\eta})^{-1}\boldsymbol{1}\right).
        \end{align*}
\end{lemma}
To control the terms appearing in Lemma~\ref{lemma:From-GJK}, we again appeal to~\cite{gamarnik2023shattering}. First,~\cite[Equation~(93)] {gamarnik2023shattering} establishes
\begin{equation}\label{eq:93}   
|\Sigma(\boldsymbol{\eta})|^{-\frac12} \le \bigl(1-2mp\xi^p\bigr)^{-\frac{m}{2}} = O_n(1),
\end{equation}
since $m,p,\xi=O_n(1)$. Next,~\cite[Equation~(94)] {gamarnik2023shattering} gives
\begin{equation}\label{eq:94}    
\prod_{i\le m}\ip{e_i}{\Sigma(\boldsymbol{\eta})^{-1}\boldsymbol{1}}\le m^{\frac{m}{2}}\bigl(1-2mp\xi^p\bigr)^{-m} = O_n(1).
\end{equation}
Lastly,~\cite[Equation~(95)] {gamarnik2023shattering} gives
\begin{equation}\label{eq:95}   
\boldsymbol{1}^T\Sigma(\boldsymbol{\eta})^{-1}\boldsymbol{1}\ge \frac{m}{1+2mp\xi^p}.
\end{equation}
Combining Lemma~\ref{lemma:From-GJK} with~\eqref{eq:93}-\eqref{eq:95} and observing that the bounds~\eqref{eq:93}-\eqref{eq:95} do not depend on $\boldsymbol{\eta}$, we establish Lemma~\ref{lemma:prob_term}.
\end{proof}
Equipped with Lemma~\ref{lemma:From-GJK}, define
\[
S_{ij}(\epsilon^*) := \bigl\{(\Xi,\bar{\Xi})\in\mathcal{F}(m,\xi,\eta)\times \mathcal{F}(m,\xi,\eta):n^{-1}d_H(\bs^{(i)},\bar{\bs}^{(j)})\in[0,\epsilon^*]\cup[1-\epsilon^*,1]\bigr\}.
\]
Using Lemma~\ref{lemma:COUN}, and the remark following~\eqref{eq:L_term}, we obtain
\begin{equation}\label{eq:S_ij_eps-star}
    |S_{ij}(\epsilon^*)| \le \bigl(2^n \cdot L\bigr)\cdot \left(\sum_{k\in [0,n\epsilon^*]\cup [n(1-\epsilon^*),n]}\binom{n}{k}\cdot L\right) \le \exp_2\bigl(n+1+nh_b(\epsilon^*)\bigr)L^2.
\end{equation}
Furthermore, for any $(\Xi,\bar{\Xi})\in\mathcal{F}(m,\xi,\eta)\times \mathcal{F}(m,\xi,\eta)$, we have
\begin{align}
\mathbb{P}\bigl[H(\bs^{(i)})\ge \gamma\sqrt{2\ln 2},H(\bar{\bs}^{(i)})\ge \gamma\sqrt{2\ln 2}, 1\le i\le m\bigr]&\le \mathbb{P}\bigl[H(\bs^{(i)})\ge \gamma\sqrt{2\ln 2}, 1\le i\le m\bigr]\nonumber \\
&\le \exp_2\left(-n\frac{m\gamma^2}{1+2mp\gamma^p}+O(\log_2 n)\right)\label{eq:Sigma-ij-eps-star-prob},
\end{align}
where we used Lemma~\ref{lemma:prob_term} to deduce~\eqref{eq:Sigma-ij-eps-star-prob}. Combining~\eqref{eq:S_ij_eps-star} and~\eqref{eq:Sigma-ij-eps-star-prob}, we upper bound $\Sigma_{ij}(\epsilon^*)$ in~\eqref{eq:Sigma-ij}:
\begin{equation}\label{eq:UPPER__BD_S_IJ}
    \Sigma_{ij}(\epsilon^*) \le L^2 \exp_2\left(n+1+nh_b(\epsilon^*) -n\frac{m\gamma^2}{1+2mp\xi^p}+O(\log_2 n)\right)
\end{equation}
Furthermore, Proposition~\ref{prop:S_1st_Mom} yields
\begin{equation}\label{eq:1ST_MOM_Sq}
    \mathbb{E}[M]^2 \ge L^2 \exp_2\bigl(2n-2m\gamma^2 n+o(n)\bigr).
\end{equation}
Now, recall $\epsilon^*$ from~\eqref{eq:Eps-star} and choose $P_1^*$ such that 
\begin{equation}\label{eq:P_1-STERKA}
    1-h_b(\epsilon^*) - m\gamma^2 - \frac{2m^2\gamma^2 p\xi^p}{1+2mp\xi^p}>0,\quad\text{for all}\quad p\ge P_1^*.
\end{equation}
Such a $P_1^*$ indeed exists since $1-h_b(\epsilon^*)-m\gamma^2>0$ and $2m^2\gamma^2 p\xi^p/(1+2mp\xi^p)\to 0$ as $p\to\infty$ for fixed $m,\gamma$ and $\xi\in(0,1)$. Equipped with these, we now combine~\eqref{eq:UPPER__BD_S_IJ} and~\eqref{eq:1ST_MOM_Sq} and obtain
\begin{align}
\frac{\Sigma_{ij}(\epsilon^*)}{\mathbb{E}[M]^2} &\le \exp_2\left(-n+nh_b(\epsilon^*) +2m\gamma^2 n - \frac{m\gamma^2 n}{1+2mp\xi^p}+o(n)\right) \nonumber \\
&=\exp_2\left(-n\left(1-h_b(\epsilon^*)-m\gamma^2-\frac{2m^2\gamma^2 p\xi^p}{1+2mp\xi^p}\right)+o(n)\right)\nonumber \\
&=\exp_2\bigl(-\Theta(n)\bigr)\label{eq:S-ij-eps-over-exp-small}
\end{align}
for any $p\ge P_1^*$ via~\eqref{eq:P_1-STERKA}. Since $m=O(1)$ and the indices $1\le i,j\le m$ are arbitrary, we thus obtain
\begin{equation}\label{eq:WEAK_TERM}
   \frac{1}{\mathbb{E}[M]^2}\sum_{1\le i,j\le m}\Sigma_{ij}(\epsilon^*) \le 2^{-\Theta(n)}, \quad \text{for}\quad p\ge P_1^*.
\end{equation}
\paragraph{The $\Sigma_d$ Term} We now study $\Sigma_d$ term in~\eqref{eq:Sigma_D}. Let
\begin{equation}\label{eq:S_d_eps-star}
    S_d(\epsilon^*) := \bigl\{(\Xi,\bar{\Xi})\in\mathcal{F}(m,\xi,\eta)\times \mathcal{F}(m,\xi,\eta):n^{-1}d_H(\bs^{(i)},\bar{\bs}^{(j)})\in[\epsilon^*,1-\epsilon^*],1\le i,j\le m\bigr\}.
\end{equation}
We crudely upper bound $|S_d(\epsilon^*)|$ by
\begin{equation}\label{eq:COUNT-Sigma_D}
\bigl|S_d(\epsilon^*)\bigr|\le |\mathcal{F}(m,\xi,\eta)|^2.
\end{equation}
We next establish the key probability estimate.
\begin{lemma}\label{lemma:key-prob}
    For $m\in\N$, $0<\eta<\xi<1$, and $\epsilon^*$ in~\eqref{eq:Eps-star}, there is a $P_2^*\in \N$ such that the following holds. Fix any $p\ge P_2^*$. Then,
    \[
    \sup_{(\Xi,\bar{\Xi})\in S_d(\epsilon^*)}\mathbb{P}\left[\min_{1\le i\le m}H(\bs^{(i)})\ge \gamma\sqrt{2\ln 2},\min_{1\le i\le m}H(\bar{\bs}^{(i)})\ge \gamma\sqrt{2\ln 2}\right]\le \exp_2\left(-\frac{2m\gamma^2 n}{1+\Delta_p}+O(\log_2 n)\right),
    \]
    where
    \[
    \Delta_p = m\sqrt{2\xi^{2p}+2(1-2\epsilon^*)^{2p}}.
    \]
\end{lemma}
\begin{proof}[Proof of Lemma~\ref{lemma:key-prob}]
    Fix $(\Xi,\bar{\Xi})\in S_d(\epsilon^*)$ with $\Xi=(\bs^{(i)}:i\le m)$ and $\bar{\Xi}=(\bar{\bs}^{(i)}:i\le m)$. From~\eqref{eq:S_d_eps-star},
\begin{equation}\label{eq:Overlap_intra}
        \frac1n\ip{\bs^{(i)}}{\bar{\bs}^{(j)}} \in[-(1-2\epsilon^*),1-2\epsilon^*],\quad 1\le i,j\le m.
        \end{equation}
        Moreover, as $\Xi,\bar{\Xi}\in\mathcal{F}(m,\xi,\eta)$, we also have for $1\le i<j\le m$ \begin{equation}\label{eq:Overlap_inter}
            \frac1n\ip{\bs^{(i)}}{\bs^{(j)}}\in[\xi-\eta,\xi]\quad\text{and}\quad \frac1n\ip{\bar{\bs}^{(i)}}{\bar{\bs}^{(j)}}\in[\xi-\eta,\xi].
        \end{equation}
        Set $Z_i = \sqrt{n}H(\bs^{(i)}),1\le i\le m$ and $\bar{Z}_i=\sqrt{n}H(\bar{\bs}^{(i)}),1\le i\le m$, so that $Z_i,\bar{Z}_i\sim \cN(0,1)$. We proceed by applying Theorem~\ref{thm:multiv-tail} to the random vector $\boldsymbol{V}\in\R^{2m}$ with tail $\boldsymbol{t}\in\R^{2m}$, where
    \begin{equation}\label{eq:V_and_t}
                \boldsymbol{V}=(Z_1,\dots,Z_m,\bar{Z}_1,\dots,\bar{Z}_m)\in\R^{2m}\quad\text{and}\quad\boldsymbol{t} =(\gamma\sqrt{2n\ln 2})\mathbf{1}\in\R^{2m}.
               \end{equation}
To that end, we study the covariance matrix $A(\Xi,\bar{\Xi})\in\R^{2m\times 2m}$ of $\boldsymbol{V}$, admitting the following form:
                \begin{equation}\label{eq:A-Xi-bar-Xi}
        A(\Xi,\bar{\Xi}) =  \displaystyle\begin{pmatrix}
B(\Xi)
  & \vline & \hat{E} \\
\hline
  \hat{E}& \vline &
 B(\bar{\Xi})
\end{pmatrix}\in\R^{2m\times 2m}.      
        \end{equation}
    The matrices $B(\Xi),B(\bar{\Xi}),\hat{E}\in\R^{m\times m}$ satisfy the following properties.
    \begin{itemize}
        \item For $1\le i<j\le m$,
        \[
       \bigl(B(\Xi)\bigr)_{ij} =  \bigl(B(\Xi)\bigr)_{ji} = \left(n^{-1}\ip{\bs^{(i)}}{\bs^{(j)}}\right)^p \in [(\xi-\eta)^p,\xi^p]
        \]
        and
        \[
             \bigl(B(\bar{\Xi})\bigr)_{ij} =  \bigl(B(\bar{\Xi})\bigr)_{ji} = \left(n^{-1}\ip{\bar{\bs}^{(i)}}{\bar{\bs}^{(j)}}\right)^p \in [(\xi-\eta)^p,\xi^p]
        \]
        Moreover, for $1\le i\le m$,
        \[
        \bigl(B(\Xi)\bigr)_{ii} = \bigl(B(\bar{\Xi})\bigr)_{ii} = 1.
        \]
        \item For $1\le i,j\le m$, 
        \[
        \hat{E}_{ij} =\hat{E}_{ji}=\left(n^{-1}\ip{\bs^{(i)}}{\bar{\bs}^{(j)}}\right)^p \Rightarrow |\hat{E}_{ij}| = |\hat{E}_{ji}| \le (1-2\epsilon^*)^p.
        \]
    \end{itemize}
    Equipped with these, we write $A(\Xi,\bar{\Xi})$ in~\eqref{eq:A-Xi-bar-Xi} as
    \begin{equation}\label{eq:Id-decompose}
        A(\Xi,\bar{\Xi}) = I_{2m} + E\in\R^{2m\times 2m},
    \end{equation}
    where
    \begin{equation}\label{eq:E-Frob_norm}
        \|E\|_F \le m\sqrt{2\xi^{2p}+2(1-2\epsilon^*)^{2p}} := \Delta_p.
    \end{equation}
    Note that as $m=O(1)$,  $\xi\in(0,1)$ and $\epsilon^*\in(0,\frac12)$, $\Delta_p\to 0$ as $p\to\infty$. Before applying Theorem~\ref{thm:multiv-tail}, we must verify $\bigl(A(\Xi,\bar{\Xi})^{-1}\boldsymbol{t}\bigr)_i>0$ for $1\le i\le 2m$. We have the following chain of inequalities:
    \begin{align}
\bigl\|A(\Xi,\bar{\Xi})^{-1}\boldsymbol{t} -\boldsymbol{t} \bigr\|_2 &\le \|\boldsymbol{t}\|_2\cdot \bigl\|\bigl(I_{2m}+E\bigr)^{-1}-I_{2m}\bigr\|_2 \label{eq:CSE}\\
&\le \gamma\sqrt{4mn\ln 2}\cdot \left\|\sum_{k\ge 1}E^k\right\|_F\label{eq:matrix-Taylor} \\
&\le \gamma\sqrt{4mn\ln 2}\left(\sum_{k\ge 1}\|E\|_F^k\right)\label{eq:T_ineq} \\
&\le \gamma\sqrt{4mn\ln 2}\frac{\Delta_p}{1-\Delta_p}\label{eq:Geo_Series}.
    \end{align}
    Here,~\eqref{eq:CSE} follows from Cauchy-Schwarz inequality;~\eqref{eq:matrix-Taylor} follows from the fact $\boldsymbol{t}=(\gamma\sqrt{2n\ln 2})\mathbf{1}\in\R^{2m}$ and the series $I+E=I-E-E^2-\cdots$ valid for $\|E\|_2<1$;~\eqref{eq:T_ineq} follows from triangle inequality; and~\eqref{eq:Geo_Series} follows from~\eqref{eq:E-Frob_norm} and the geometric series summation. 

    Using~\eqref{eq:Geo_Series}, we have
    \[
\bigl|\bigl(A(\Xi,\bar{\Xi})^{-1}\boldsymbol{t}\bigr)_i - \gamma\sqrt{2n \ln 2}\bigr| \le \bigl\|A(\Xi,\bar{\Xi})^{-1}\boldsymbol{t} - \boldsymbol{t}\bigr\|_2 \le \gamma\sqrt{4mn\ln 2}\frac{\Delta_p}{1-\Delta_p}.
    \]
    In particular, there is a $P_2^*\in\N$ such that for $p\ge P_2^*$, we have
    \[
    \bigl(A(\Xi,\bar{\Xi})^{-1}\boldsymbol{t}\bigr)_i\ge \gamma\sqrt{2n\ln 2}\left(1-\frac{\Delta_p\sqrt{2m}}{1-\Delta_p}\right)>0,\quad 1\le i\le m,
    \]
    where we used the fact $\Delta_p\to 0$ as $p\to 0$. In particular, Theorem~\ref{thm:multiv-tail} is applicable, provided $p\ge P_2^*$. Below, the value of the constant $P_2^*\in\N$ changes from line to line.
    \paragraph{Controlling the Spectrum of $A(\Xi,\bar{\Xi})$} Using~\eqref{eq:Id-decompose}, we control the spectrum of $A(\Xi,\bar{\Xi})$. Recall Weyl's Inequality~\cite{horn2012matrix}: if $A,B$ are Hermitian matrices then 
    \[
    \bigl|\lambda_i(A+B)-\lambda_i(A)\bigr|\le \|B\|_2 \le \|B\|_F,
    \]
    where $\lambda_1(A)\ge \lambda_2(A)\ge \cdots$ are the eigenvalues of $A$. Let $\mu_1\ge\cdots\ge \mu_{2m}$ be the eigenvalues of $A(\Xi,\bar{\Xi})=I_{2m}+E$ per~\eqref{eq:Id-decompose}. Since the eigenvalues of $I_{2m}$ are all unity, Weyl's inequality yields:
    \begin{equation}\label{eq:Spec_of_A}
       \bigl|\mu_i-1\bigr| \le \|E\|_F \Rightarrow \mu_i\in\bigl[1-\Delta_p,1+\Delta_p\bigr],\quad\text{for}\quad 1\le i\le 2m.
    \end{equation}
    Note that for $p$ large enough, $\mu_{2m}>0$, so $A(\Xi,\bar{\Xi})$ is positive definite. Using~\eqref{eq:Spec_of_A} we obtain
    \begin{equation}\label{eq:A-det-bd}
        |A(\Xi,\bar{\Xi})| = \prod_{1\le i\le 2m}\mu_i \ge (1-\Delta_p)^{2m}.
    \end{equation}
    Next, we control certain quantities before applying Theorem~\ref{thm:multiv-tail}. Diagonalize $A(\Xi,\bar{\Xi})^{-1}$ as 
    \[
    A(\Xi,\bar{\Xi}) = Q(\Xi,\bar{\Xi})^T \Lambda(\Xi,\bar{\Xi}) Q(\Xi,\bar{\Xi}),
    \]
    where 
    \begin{equation}\label{eq:Q_normal}
        Q(\Xi,\bar{\Xi})\in\R^{2m\times 2m}\quad\text{with}\quad  Q(\Xi,\bar{\Xi})^T  Q(\Xi,\bar{\Xi}) =  Q(\Xi,\bar{\Xi}) Q(\Xi,\bar{\Xi})^T =I_{2m}
        \end{equation}
    and
    \[
    \Lambda(\Xi,\bar{\Xi}) = \mathrm{diag}\bigl(\mu_i^{-1}:1\le i\le 2m\bigr)\in \R^{2m\times 2m}.
    \]
    Set $\boldsymbol{v}_{\mathrm{aux}}=Q(\Xi,\bar{\Xi})\mathbf{1}\in\R^{2m}$. Using~\eqref{eq:Q_normal}, we obtain $\|\boldsymbol{v}_{\mathrm{aux}}\|_2^2=2m$. With this, we have
    \begin{align*}
            \boldsymbol{t}^T A(\Xi,\bar{\Xi})^{-1}\boldsymbol{t}
            &=(\gamma^2 \cdot 2n\ln 2)(\boldsymbol{v}_{\mathrm{aux}})^T \Lambda(\Xi,\bar{\Xi})\boldsymbol{v}_{\mathrm{aux}} =(\gamma^2 \cdot 2n\ln 2) \sum_{1\le i\le 2m}\mu_i^{-1}\bigl(\boldsymbol{v}_{\mathrm{aux}}\bigr)_i^2 \ge \frac{4mn\gamma^2 \ln 2}{1+\Delta_p},
        \end{align*}
        where we used the fact $\mu_i^{-1}\ge 1+\Delta_p$ per~\eqref{eq:Spec_of_A} and $\|\boldsymbol{v}_{\mathrm{aux}}\|_2^2 = 2m$ in the last step. So,
        \begin{equation}\label{eq:EXP_TERM}
         \exp\left(-\frac{\boldsymbol{t}^TA(\Xi,\bar{\Xi})^{-1}\boldsymbol{t}}{2}\right)  \le \exp_2\left(-\frac{2mn\gamma^2}{1+\Delta_p}\right).
        \end{equation}
    Lastly, fix any $1\le i\le 2m$ and let $e_i$ be the $i\mathrm{th}$ unit vector in $\R^{2m}$. We have,
    \begin{align}
        \bigl|\ip{e_i}{A(\Xi,\bar{\Xi})^{-1}\boldsymbol{t}}\bigr|&\le \|e_i\|_2 \cdot \|A(\Xi,\bar{\Xi})^{-1}\|_2 \cdot \|\boldsymbol{t}\|_2\label{eq:CSE-2} \\
        &\le \frac{2\gamma\sqrt{mn \ln 2}}{1-\Delta_p}\label{eq:combine_norm_spec},
    \end{align}
    where~\eqref{eq:CSE-2} follows by applying Cauchy-Schwarz inequality twice and~\eqref{eq:combine_norm_spec} follows by combining the facts $\|e_i\|_2=1$ and $\|\boldsymbol{t}\|_2 = 2\gamma\sqrt{mn\ln 2}$ with~\eqref{eq:Spec_of_A}. Using the fact $A(\Xi,\bar{\Xi})^{-1}\boldsymbol{t}$ is entrywise positive for large $p$ as established earlier, we obtain
    \begin{equation}\label{eq:pi_of_proj}
        \prod_{1\le i\le 2m} \ip{e_i}{A(\Xi,\bar{\Xi})^{-1}\boldsymbol{t}} \le n^m\cdot \left(\frac{2\gamma\sqrt{m\ln 2}}{1-\Delta_p}\right)^{2m}.
    \end{equation}
     \paragraph{Applying Theorem~\ref{thm:multiv-tail}} We apply  Theorem~\ref{thm:multiv-tail} to $\boldsymbol{V}\in\R^{2m}$ in~\eqref{eq:V_and_t} with $\boldsymbol{t}\in\R^{2m}$ to obtain
     \begin{align}
         &\mathbb{P}\left[\min_{1\le i\le m}H(\bs^{(i)})\ge \gamma\sqrt{2\ln 2},\min_{1\le i\le m}H(\bar{\bs}^{(i)})\ge \gamma\sqrt{2\ln 2}\right] \nonumber\\
         &\le (2\pi)^{-m} |A(\Xi,\bar{\Xi})|^{-1} \exp\left(-\frac{\boldsymbol{t}^TA(\Xi,\bar{\Xi})^{-1}\boldsymbol{t}}{2}\right)  \prod_{1\le i\le 2m} \ip{e_i}{A(\Xi,\bar{\Xi})^{-1}\boldsymbol{t}} \nonumber \\
         &=\exp_2\left(-\frac{2mn\gamma^2}{1+\Delta_p} +O(\log_2 n)\right)\label{eq:prob_combine_everything}
     \end{align}
where~\eqref{eq:prob_combine_everything} follows by combining~\eqref{eq:A-det-bd},~\eqref{eq:EXP_TERM} and~\eqref{eq:pi_of_proj} and recalling that as $m,\Delta_p=O(1)$, we have
\[
(1-\Delta_p)^{2m}, \left(\frac{2\gamma\sqrt{m\ln 2}}{1-\Delta_p}\right)^{2m} =O(1)\quad\text{and}\quad \log_2(n^m) = O(m\log_2 n)=O(\log_2 n).
\]
Observe that both $O(\log_2 n)$ term as well as $\Delta_p$ are independent of $(\Xi,\bar{\Xi})$. Taking a supremum over all such pairs, we deduce
\[
\sup_{(\Xi,\bar{\Xi})\in S_d(\epsilon^*)}\mathbb{P}\left[\min_{1\le i\le m}H(\bs^{(i)})\ge \gamma\sqrt{2\ln 2},\min_{1\le i\le m}H(\bar{\bs}^{(i)})\ge \gamma\sqrt{2\ln 2}\right]\le \exp_2\left(-\frac{2m\gamma^2 n}{1+\Delta_p}+O(\log_2 n)\right)
\]
and establish Lemma~\ref{lemma:key-prob}.
     \end{proof}
We combine Lemma~\ref{lemma:key-prob} with the counting estimate~\eqref{eq:COUNT-Sigma_D} to obtain an upper bound on $\Sigma_d$ in~\eqref{eq:Sigma_D}:
\begin{equation}\label{eq:Sigma_d_up}
    \Sigma_d \le |\mathcal{F}(m,\xi,\eta)|^2 \exp_2\left(-\frac{2mn\gamma^2}{1+\Delta_p}+O(\log_2 n)\right).
\end{equation}
Using \eqref{eq:Sigma_d_up} and~\eqref{eq:1ST_MOM_Sq} and recalling $|\mathcal{F}(m,\xi,\eta)|=2^n\cdot L$ per~\eqref{eq:L_term} we obtain that for $p\ge P_2^*$,
\begin{equation}\label{eq:STRONG_TERM}
    \frac{\Sigma_d}{\mathbb{E}[M]^2}\le \exp_2\left(\frac{2mn\gamma^2 \Delta_p}{1+\Delta_p}+o(n)\right).
\end{equation}
We now apply Paley-Zygmund inequality, Lemma~\ref{lemma:PZ}, to random variable $M$ with $\theta=0$: for $p$ large,
\[
\mathbb{P}\bigl[M\ge 1\bigr] \ge \frac{\mathbb{E}[M]^2}{\mathbb{E}[M^2]}\ge \frac{1}{\exp\bigl(-\Theta(n)\bigr) + \exp_2\left(\frac{2mn\gamma^2 \Delta_p}{1+\Delta_p}+o(n)\right)}\ge \exp_2\left(-\frac{2mn\gamma^2 \Delta_p}{1+\Delta_p}+o(n)\right),
\]
where we combined~\eqref{eq:overcounting},~\eqref{eq:WEAK_TERM}, and~\eqref{eq:STRONG_TERM} in the penultimate step. Lastly, note that $\Delta_p$ is defined in~\eqref{eq:E-Frob_norm}; it depends only on $p,m,\xi$ and $\epsilon^*$. As $\epsilon^*$ per~\eqref{eq:Eps-star} is a function of $m,\gamma$ only, we find that $\Delta_p$ is indeed a function of $p,m,\gamma$, and $\xi$ only. With this, we finish the proof of Proposition~\ref{prop:2nd-moment}.
\end{proof}
\paragraph{Putting Everything Together} Equipped with Lemma~\ref{lemma:T-concen} and Proposition~\ref{prop:2nd-moment}, we now show how to establish Theorem~\ref{thm:m-ogp-tight}. Fix a $\gamma'$ such that $\gamma<\gamma'<1/\sqrt{m}$. Using Proposition~\ref{prop:2nd-moment} with $\gamma'$, we obtain that there is a $c\in(0,1)$ and a $P^*\in\N$ such that for any fixed $p\ge P^*$, 
\begin{equation}\label{eq:2nd-mom-helps}
\mathbb{P}\bigl[\bigl|\mathcal{S}_{\mathrm{p-spin}}(\gamma',m,\xi,\eta)\bigr|\ge 1\bigr]\ge \exp\bigl(-nc^p\bigr),
\end{equation}
as $n\to\infty$. Observe that for $T_{m,\xi,\eta}$ in~\eqref{eq:auxil-rv-T},
\begin{equation}\label{eq:crucial}
    \bigl\{\bigl|\mathcal{S}_{\mathrm{p-spin}}(\gamma',m,\xi,\eta)\bigr|\ge 1\bigr\}= \bigl\{T_{m,\xi,\eta}\ge \gamma'\sqrt{2\ln 2}\bigr\}. 
\end{equation}
Next, note that for any fixed $\epsilon>0$, there is a $P_\epsilon\in \N$ such that for every $p\ge P_\epsilon$
\[
\frac{\epsilon^2}{2}>\frac{2m\gamma^2 \Delta_p}{1+\Delta_p}
\]
since $\Delta_p\to 0$ as $p\to\infty$. In the remainder, fix any $p\ge \max\{P_\epsilon,P^*\}$. We then have
\[
\exp\left(-\frac{2mn\gamma^2 \Delta_p}{1+\Delta_p}+o(n)\right)\ge 2\exp\left(-\frac{n\epsilon^2}{2}\right)
\]
for all large enough $n$. 

Combining Lemma~\ref{lemma:T-concen}, Proposition~\ref{prop:2nd-moment}, and the observation~\eqref{eq:crucial}, we obtain
\begin{equation}\label{eq:CHAIN}
    \mathbb{P}\bigl[T_{m,\xi,\eta}\ge \gamma'\sqrt{2\ln 2}\bigr] \ge \exp\left(-\frac{2mn\gamma^2 \Delta_p}{1+\Delta_p}+o(n)\right)\ge 2\exp\left(-\frac{n\epsilon^2}{2}\right) \ge \mathbb{P}\bigl[T_{m,\xi,\eta}\ge \mathbb{E}[T_{m,\xi,\eta}]+\epsilon\bigr].
\end{equation}
In particular, 
\begin{equation}\label{eq:E-LB}
\mathbb{E}[T_{m,\xi,\eta}]\ge \gamma'\sqrt{2\ln 2}-\epsilon.
\end{equation}
Lastly, we combine Lemma~\ref{lemma:T-concen} (with $t=\epsilon$) and~\eqref{eq:E-LB} to obtain that w.p.\,at least $1-\exp(-\Theta(n))$, \[
T_{m,\xi,\eta}\ge \gamma'\sqrt{2\ln 2}-2\epsilon.
\]
Provided $\epsilon<(\gamma'-\gamma)\sqrt{\ln 2/2}$, we obtain that \[
T_{m,\xi,\eta}\ge \gamma\sqrt{2\ln 2}
\]
w.p.\,at least $1-\exp(-\Theta(n))$. This together with~\eqref{eq:crucial}, establishes Theorem~\ref{thm:m-ogp-tight}.
    \subsection{Proofs of Theorem~\ref{thm:m-ogp-sharp-PT}}\label{pf:PT}
    Fix $m\in\N$. Using Proposition~\ref{thm:m-ogp}, we obtain that any $\gamma>1/\sqrt{m}$ is $m$-inadmissible in the sense of Definition~\ref{def:adm-exp}. In particular,
    \[
    \inf\{\gamma>0:\gamma\text{ is $m$-inadmissible}\} = 1/\sqrt{m}.
    \]
    Similarly, Theorem~\ref{thm:m-ogp-tight} shows that any $\gamma<1/\sqrt{m}$ is $m$-admissible. Taken together, we find
    \[
    \sup\{\gamma>0:\gamma\text{ is $m$-admissible}\} = 1/\sqrt{m}.
    \]
    Recalling Definition~\ref{def:sharp-PT}, we establish Theorem~\ref{thm:m-ogp-sharp-PT} by combining the above two displays.

\subsection{Proof of Theorem~\ref{thm:m-ogp-k-sat}}\label{sec:pf-ogp-k-sat}
Our proof is based on the \emph{first moment method}. Fix $m\in\N$, $0<\eta<\xi<\frac12$ and let 
\begin{equation}\label{eq:S-beta}
\widehat{\mathcal{F}}(m,\xi,\eta)=\bigl\{(\bs^{(1)},\dots,\bs^{(m)}):n^{-1}d_H(\bs^{(k)},\bs^{(\ell)})\in[\xi-\eta,\xi],1\le k<\ell\le m\bigr\},
\end{equation}
where $\bs^{(t)}\in\{0,1\}^n$. Fix a $\gamma>1/m$. Define 
\begin{equation}\label{eq:rv-N}
N = \bigl|\mathcal{S}_{\mathrm{k-SAT}}(\gamma,m,\xi,\eta)\bigr| = \sum_{(\bs^{(t)}:t\in[m])\in\widehat{\mathcal{F}}(m,\xi,\eta)} \ind\bigl\{\Phi(\bs^{(t)}) = 1,\forall t\in[m]\bigr\}.
\end{equation}
In what follows, we establish that $\mathbb{E}[N]=e^{-\Theta(n)}$ for a suitable choice of $\xi$ and $\eta$. The conclusion then follows by Markov's inequality: $\mathbb{P}[N\ge 1]\le \mathbb{E}[N]$. 
\paragraph{Cardinality Estimate} We first bound $|\widehat{\mathcal{F}}(m,\xi,\eta)|$. Note that, there are $2^n$ choices for $\bs^{(1)}$. Having fixed a $\bs^{(1)}$, the number of choices for any $\bs^{(k)}$, $2\le k\le m$, is $\sum_{(\xi-\eta)n\le k\le \xi n}\binom{n}{k}$. Provided $\xi<\frac12$, which will be satisfied by our eventual choice of parameters,  we have
\[
\sum_{(\xi-\eta)n\le k\le \xi n}\binom{n}{k}\le \binom{n}{\xi n}n,
\]
using the fact binomial coefficients decrease away from the
center. So, 
\begin{align}\label{eq:card-S}
|\widehat{\mathcal{F}}(m,\xi,\eta)| 
&\le 2^n \left(\binom{n}{\xi n}n^{O(1)}\right)^{m-1}\le \exp\Bigl(n\ln 2 + n(m-1)\ln 2 \cdot h_b(\xi) + O(\ln n)\Bigr),
\end{align}
where we used Stirling's approximation, $\binom{n}{\xi n} = \exp_2\bigl(nh_b(\xi)+O(\ln n)\bigr)$. 
\paragraph{Probability Estimate} We next study the probability term. Fix a $(\bs^{(1)},\dots,\bs^{(t)})\in\widehat{\mathcal{F}}(m,\xi,\eta)$. Let $\Phi$ consists of independent $k$-clauses $\mathcal{C}_i$, $1\le i\le M$, where $M=\alpha_\gamma n$ for $\alpha_\gamma = \gamma 2^k\ln 2$. We have
\begin{align}
\mathbb{P}\bigl[\Phi(\bs^{(t)})=1,\forall t\in[m]\bigr] &=\mathbb{P}\bigl[\mathcal{C}_1(\bs^{(t)})=1,\forall t\in[m]\bigr]^{\alpha_\gamma n} \nonumber\\
    &=\Bigl(1-\mathbb{P}\bigl[\cup_{1\le t\le m}\{\mathcal{C}_1(\bs^{(t)})=0\}\bigr]\Bigr)^{\alpha_\gamma n}\nonumber\\
    &\le \Bigl(1-m\cdot 2^{-k}+\binom{m}{2}(1-\xi+\eta)^k 2^{-k}\Bigr)^{\alpha_\gamma n}\label{eq:prob-term-ksat},
\end{align}
where~\eqref{eq:prob-term-ksat} is obtained by using the fact that for events $\mathcal{E}_1,\dots,\mathcal{E}_m$,
\[
\mathbb{P}\bigl[\cup_{1\le i\le m}\mathcal{E}_i\bigr]\ge \sum_{1\le i\le m}\mathbb{P}[\mathcal{E}_i] - \sum_{1\le i<j\le m}\mathbb{P}\bigl[\mathcal{E}_i\cap\mathcal{E}_j\bigr],
\]
and Lemma~\ref{lemma:random-k-sat-prob}.
\paragraph{Combining Everything} We combine~\eqref{eq:card-S} and~\eqref{eq:prob-term-ksat} to arrive at
\begin{align*}
    \mathbb{E}[N]\le \exp\left(n\ln 2 + n(m-1)\ln 2\cdot h_b(\xi) +\alpha_\gamma n \ln\left(1-m\cdot 2^{-k}+\binom{m}{2}(1-\xi+\eta)^k 2^{-k}\right)+O(\ln n)\right).
\end{align*}
Next, a Taylor expansion yields $\ln(1-x)=-x+O(x^2)$. Thus,
\[
\ln\left(1-m\cdot 2^{-k} + \binom{m}{2}(1-\xi+\eta)^k 2^{-k}\right) = -m\cdot 2^{-k} + \binom{m}{2}(1-\xi+\eta)^k 2^{-k} +O_k\left(m^2 2^{-2k}\right),
\]
using the fact that $m=O_k(1)$. We now insert $\alpha_\gamma =\gamma 2^k\ln 2$ (for a $\gamma>1/m$) to obtain
\begin{align*}
    \mathbb{E}[N]&\le \exp\left(n\ln 2\underbrace{\left(1+(m-1)h_b(\xi)-m\gamma+\gamma\binom{m}{2}(1-\xi+\eta)^k +O_k\left(m^2 2^{-k}\right) \right)}_{:=\varphi(k,\gamma,m,\xi,\eta)}+O(\ln n)\right)\\
    &=\exp\Bigl(n\cdot \ln 2 \cdot \varphi(k,\gamma,m,\xi,\eta)+O(\ln n)\Bigr).
\end{align*}
Since $\gamma > \frac1m$, we have that $\gamma m = 1+\delta$ for some $\delta>0$. With this, choose $\xi^*<1/2$, such that $(m-1)h_b(\xi^*)\le \delta/4$. Set $\eta^* = \xi^*/2$. Finally, choose $K^*$ sufficiently large, so that, for $k\ge K^*$ (and aforementioned choice of $m,\gamma,\xi^*$),
\[
\gamma \binom{m}{2} (1-\xi^*+\eta^*)^k + O_k\left(m^2 2^{-k}\right) \le \frac{\delta}{4}.
\]
With these, we obtain
\[
\varphi(k,\gamma,m,\xi^*,\eta^*)\le -\frac{\delta}{2},\quad \forall k\ge K^*
\]
and therefore
\[
\mathbb{E}[N]\le \exp\left(-\frac{\delta\ln 2}{2}n+O(\ln n)\right) = \exp\bigl(-\Theta(n)\bigr).
\]
This completes the proof of Theorem~\ref{thm:m-ogp-k-sat}.
\subsection{Proof of Theorem~\ref{thm:m-ogp-k-sat-absent}}\label{sec:pf-ogp-absent-k-sat}
The proof is based on the \emph{second moment method}. To that end, recall Paley-Zygmund inequality from Lemma~\ref{lemma:PZ}, which asserts that for any non-negative integer-valued random variable $N$,
\begin{equation}\label{eq:paley}
    \mathbb{P}\bigl[N\ge 1\bigr]\ge \mathbb{E}[N]^2/\mathbb{E}[N^2].
\end{equation}

 Now, recall $\widehat{\mathcal{F}}(m,\xi,\eta)$ from~\eqref{eq:S-beta} and let $N$ be the random variable introduced in~\eqref{eq:rv-N}. Fix a $\bs\in\{0,1\}^n$ and define
 \[
L_{\bs}=\Bigl|\bigl\{(\bs^{(1)},\dots,\bs^{(m)})\in \widehat{\mathcal{F}}(m,\xi,\eta):\bs^{(1)}=\bs\bigr\}\Bigr|.
\]
As $L_{\bs}$ is independent of $\bs$, we set $L:= L_{\bs}$ and obtain that $|\widehat{\mathcal{F}}(m,\xi,\eta)|=2^n\cdot L$. Next, fix a $\gamma<1/m$ and let $\alpha_\gamma = \gamma 2^k\ln 2$.
\paragraph{Lower Bounding $\mathbb{E}[N]$} We lower bound $\mathbb{E}[N]$. Observe that,
\begin{align*}
\mathbb{P}\bigl[\Phi(\bs^{(t)})=1,\forall t\in[m]\bigr] &=\mathbb{P}\bigl[\mathcal{C}_1(\bs^{(t)})=1,\forall t\in[m]\bigr]^{\alpha_\gamma n} \nonumber\\
    &=\Bigl(1-\mathbb{P}\bigl[\cup_{1\le t\le m}\{\mathcal{C}_1(\bs^{(t)})=0\}\bigr]\Bigr)^{\alpha_\gamma n}\nonumber\\
    &\ge \Bigl(1-m\cdot 2^{-k}\Bigr)^{\alpha_\gamma n},
\end{align*}
where we took a union bound over $1\le t\le m$ to arrive at the last line. Consequently, 
\begin{equation}\label{eq:lower-bd-N}
    \mathbb{E}[N] \ge 2^n\cdot L\left(1-m\cdot 2^{-k}\right)^{\alpha n}.
\end{equation}
\paragraph{Upper Bounding $\mathbb{E}[N^2]$} Recall that $\gamma<1/m$. Fix an $\epsilon>0$ such that \begin{equation}\label{eq:choice-of-eps}
-\delta:= -1 + h_b(\epsilon)+m\gamma<0,
\end{equation}
where $h_b(p)=-p\log_2 p -(1-p)\log_2(1-p)$ is the binary entropy function log base 2. Such an $\epsilon>0$ indeed exists as $\gamma<\frac1m$ and $h_b(\cdot)$ is continuous with $\lim_{p\to 0}h_b(p)=0$.

Next, define the family $S_{ij}(\epsilon)$, $1\le i,j\le m$ of sets
\begin{equation}\label{eq:S-ij-eps}
    S_{ij}(\epsilon) = \Bigl\{(\Xi,\bar{\Xi})\in \widehat{\mathcal{F}}(m,\xi,\eta)\times\widehat{\mathcal{F}}(m,\xi,\eta):d_H(\bs^{(i)},\overline{\bs}^{(j)})\le \epsilon n\Bigr\},
\end{equation}
where $\Xi,\bar{\Xi}\in\widehat{\mathcal{F}}(m,\xi,\eta)$ are such that
\begin{equation}\label{eq:bs-sigma}
\Xi=\bigl(\bs^{(i)}:1\le i\le m)\quad\text{and}\quad \bar{\Xi}=\bigl(\overline{\bs}^{(i)}:1\le i\le m),
\end{equation}
Furthermore, let
\begin{equation}\label{eq:F-eps}
\mathcal{F}(\epsilon) = \bigl(\widehat{\mathcal{F}}(m,\xi,\eta)\times\widehat{\mathcal{F}}(m,\xi,\eta)\bigr)\setminus \bigcup_{1\le i,j\le m}S_{ij}(\epsilon).
\end{equation}
Observe that for every $(\Xi,\bar{\Xi})\in \mathcal{F}(\epsilon)$ and every $1\le i,j\le m$,  
\[
d_H\bigl(\bs^{(i)},\overline{\bs}^{(j)}\bigr)>\epsilon n.
\]
We next upper bound $\mathbb{E}[N^2]$. Observe that
\begin{align}
    \mathbb{E}[N^2] &= \sum_{(\Xi,\bar{\Xi})\in \widehat{\mathcal{F}}(m,\xi,\eta)\times \widehat{\mathcal{F}}(m,\xi,\eta)} \mathbb{P}\Bigl[\Phi(\bs^{(i)})=\Phi(\overline{\bs}^{(i)})=1,\forall i\in[m]\Bigr] \nonumber\\
    &= \underbrace{\sum_{1\le i,j\le m}\sum_{(\Xi,\bar{\Xi})\in S_{ij}(\epsilon)} \mathbb{P}\Bigl[\Phi(\bs^{(i)})=\Phi(\overline{\bs}^{(i)})=1,\forall i\in[m]\Bigr]}_{:= A_\epsilon}  + \underbrace{\sum_{(\Xi,\bar{\Xi})\in \mathcal{F}(\epsilon)} \mathbb{P}\Bigl[\Phi(\bs^{(i)})=\Phi(\overline{\bs}^{(i)})=1,\forall i\in[m]\Bigr]}_{:= B_\epsilon}.\label{eq:A-eps-B-eps-term}
\end{align}
\subsubsection*{The Term $A_\epsilon/\mathbb{E}[N]^2$}
We next show that the dominant contribution to upper bound in~\eqref{eq:A-eps-B-eps-term} comes from $B_\epsilon$. That is, we claim 
\[
\frac{A_\epsilon}{\mathbb{E}[N]^2}=\exp(-\Theta(n)).
\]
To that end, we first upper bound
\begin{equation}\label{eq:2nd-mom-prob}
\max_{\Xi,\bar{\Xi} \in \widehat{\mathcal{F}}(m,\xi,\eta)}\mathbb{P}\Bigl[\Phi(\bs^{(i)})=\Phi(\overline{\bs}^{(i)})=1,\forall i\in[m]\Bigr],
\end{equation}
where $\Xi$ and $\overline{\Xi}$ are as in~\eqref{eq:bs-sigma}.
\begin{lemma}\label{lemma:2nd-mom-pb}
\begin{align*}
\max_{\Xi,\bar{\Xi} \in \widehat{\mathcal{F}}(m,\xi,\eta)}\mathbb{P}\Bigl[\Phi(\bs^{(i)})=\Phi(\overline{\bs}^{(i)})=1,\forall i\in[m]\Bigr] \le \left(1-m\cdot 2^{-k}+2m(m-1)\cdot 2^{-k}\left(1-\frac{\xi-\eta}{2}\right)^k\right)^{\alpha n}.
\end{align*}
\end{lemma}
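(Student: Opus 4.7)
The plan is to observe that the event in the lemma is more restrictive than the single-tuple event considered in the first-moment proof of Theorem~\ref{thm:m-ogp-k-sat}, and then invoke essentially that calculation followed by a routine slackening. Namely, for any $(\Xi,\bar{\Xi})\in\widehat{\mathcal{F}}(m,\beta,\eta)\times\widehat{\mathcal{F}}(m,\beta,\eta)$ one has the set inclusion
\[
\bigl\{\Phi(\bs^{(i)})=\Phi(\overline{\bs}^{(i)})=1,\forall i\in[m]\bigr\}\subseteq \bigl\{\Phi(\bs^{(i)})=1,\forall i\in[m]\bigr\},
\]
so it suffices to upper bound $\mathbb{P}[\Phi(\bs^{(i)})=1,\forall i\in[m]]$ uniformly over $\Xi\in\widehat{\mathcal{F}}(m,\beta,\eta)$, at which point the $\bar{\Xi}$-coordinates drop out entirely. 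Since $\Xi\in\widehat{\mathcal{F}}(m,\beta,\eta)$ forces $d_H(\bs^{(i)},\bs^{(j)})\ge(\beta-\eta)n$ for all $i<j$, we are back in the single-tuple setting of Theorem~\ref{thm:m-ogp-k-sat}.

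Next, I would factor across the $\alpha n$ independent clauses of $\Phi$ and reduce to a single-clause bound on $\mathbb{P}[\mathcal{C}(\bs^{(i)})=1,\forall i\in[m]]=1-\mathbb{P}[\cup_i\{\mathcal{C}(\bs^{(i)})=0\}]$. Applying second-order Bonferroni to the union gives
\[
\mathbb{P}\bigl[\cup_i\{\mathcal{C}(\bs^{(i)})=0\}\bigr]\ge m\cdot 2^{-k} - \binom{m}{2}\max_{i<j}\mathbb{P}\bigl[\mathcal{C}(\bs^{(i)})=\mathcal{C}(\bs^{(j)})=0\bigr],
\]
and invoking Lemma~\ref{lemma:random-k-sat-prob} with the within-tuple distance bound $d_H(\bs^{(i)},\bs^{(j)})\ge(\beta-\eta)n$ estimates each pairwise probability by $2^{-k}(1-\beta+\eta)^k$. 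Combining these yields the single-clause bound
\[
\mathbb{P}[\mathcal{C}(\bs^{(i)})=1,\forall i\in[m]]\le 1-m\cdot 2^{-k}+\binom{m}{2}\cdot 2^{-k}(1-\beta+\eta)^k.
\]

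Finally, I relax this to the form stated in the lemma using the trivial inequalities $\binom{m}{2}\le 2m(m-1)$ and $1-\beta+\eta\le 1-\tfrac{\beta-\eta}{2}$ (the latter holds because $\eta<\beta$), giving the upper bound $1-m\cdot 2^{-k}+2m(m-1)\cdot 2^{-k}(1-\tfrac{\beta-\eta}{2})^k$, and raise to the $\alpha n$ power via clause independence. The argument is essentially routine; the only subtlety worth flagging is recognizing that the lemma's stated bound, which superficially appears to encode information about cross-tuple distances through the factor $(1-\tfrac{\beta-\eta}{2})^k$, is in fact a deliberately slackened version of the single-tuple Bonferroni estimate, and requires no probabilistic input beyond Lemma~\ref{lemma:random-k-sat-prob}.
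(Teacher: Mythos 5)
Your proof is correct, and it is a genuinely different --- and simpler --- route than the paper's. The paper keeps both tuples in play: it sets up $2m$ events $\mathcal{E}_i,\overline{\mathcal{E}}_i$, applies a second-order Bonferroni to the union of all $2m$ of them, and then needs the nontrivial observation (proved via the triangle inequality) that for each fixed $i$ the index set $\{j:d_H(\bs^{(i)},\overline{\bs}^{(j)})<(\beta-\eta)n/2\}$ has size at most one, which is what controls the cross-tuple terms $\mathbb{P}[\mathcal{E}_i\cap\overline{\mathcal{E}}_j]$ and is the ultimate source of the $(1-(\beta-\eta)/2)^k$ factor. You bypass this entirely by discarding $\bar{\Xi}$ via the containment $\{\Phi(\bs^{(i)})=\Phi(\overline{\bs}^{(i)})=1,\forall i\}\subseteq\{\Phi(\bs^{(i)})=1,\forall i\}$, applying Bonferroni to only the $m$ events $\mathcal{E}_i$ whose pairwise Hamming distances are already controlled by membership in $\widehat{\mathcal{F}}(m,\beta,\eta)$, and then slackening via $\binom{m}{2}\le 2m(m-1)$ and $(1-\beta+\eta)\le 1-\tfrac{\beta-\eta}{2}$. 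Your intermediate estimate $1-m\cdot 2^{-k}+\binom{m}{2}2^{-k}(1-\beta+\eta)^k$ is in fact \emph{tighter} than the paper's (which is $1-m\cdot 2^{-k}+m(m-1)2^{-k}[(1-(\beta-\eta)/2)^k+(1-\beta+\eta)^k]$ before the final slackening), and both forms are more than sufficient for the downstream use in bounding $A_\epsilon/\mathbb{E}[N]^2$, so nothing is lost. Your flag at the end is apt: the $(1-(\beta-\eta)/2)^k$ in the lemma's statement is a vestige of the paper's cross-tuple analysis and is not actually needed once one notices the one-sided reduction.

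One small point of hygiene: when you raise the single-clause bound to the $\alpha n$ power, you implicitly use that if $0\le p\le X$ then $p^{\alpha n}\le X^{\alpha n}$; since $p=\mathbb{P}[\mathcal{C}_1(\bs^{(i)})=1,\forall i]\ge 0$, the upper bound $X$ is automatically nonnegative, so the step is valid without further comment, but it is worth being aware of.
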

\begin{proof}[Proof of Lemma~\ref{lemma:2nd-mom-pb}]
Fix $\Xi,\bar{\Xi}\in\widehat{\mathcal{F}}(m,\xi,\eta)$. Define the sets
\begin{equation}\label{eq:index-set}
I_i:= \left\{j\in[m]:d_H\bigl(\bs^{(i)},\overline{\bs}^{(j)}\bigr)<\frac{(\xi-\eta) n}{2}\right\},\quad 1\le i\le m.
\end{equation}

We claim that $|I_i|\le 1$ for all $1\le i\le m$. Indeed, suppose that this is false for some $i$. That is, there exists $1\le j<k\le m$ such that $d_H\bigl(\bs^{(i)},\overline{\bs}^{(j)}\bigr),d_H\bigl(\bs^{(i)},\overline{\bs}^{(k)}\bigr)<\frac{(\xi-\eta) n}{2}$. Using triangle inequality,
\[
(\xi-\eta) n \le  d_H\bigl(\overline{\bs}^{(j)},\overline{\bs}^{(k)}\bigr)\le d_H\bigl(\bs^{(i)},\overline{\bs}^{(j)}\bigr)+d_H\bigl(\bs^{(i)},\overline{\bs}^{(k)}\bigr)<(\xi-\eta) n,
\]
a contradiction. Thus, $|I_i|\le 1, \forall i\in[m]$.

For $1\le i\le m$, introduce the events $\mathcal{E}_i = \bigl\{\mathcal{C}_1(\bs^{(i)})=0\bigr\}$, $\overline{\mathcal{E}}_i =\bigl\{\mathcal{C}_1(\overline{\bs}^{(i)})=0\bigr\}$. We have,
\begin{equation}\label{eq:2-to-k}
\mathbb{P}\bigl[\mathcal{E}_i\bigr] = \mathbb{P}\bigl[\overline{\mathcal{E}}_i\bigr] = 2^{-k},\quad 1\le i\le m.
\end{equation}
Next, we have the following chain:
\begin{align}
&\mathbb{P}\Bigl[\Phi(\bs^{(i)})=\Phi(\overline{\bs}^{(i)})=1,\forall i\in[m]\Bigr]  \nonumber \\
&=\mathbb{P}\Bigl[\mathcal{C}_1(\bs^{(i)})=\mathcal{C}_1(\overline{\bs}^{(i)})=1,1\le i\le m\Bigr]^{\alpha_\gamma n}\label{eq:clauses-independent} \\
    &=\left(1-\mathbb{P}\left[\bigcup_{i\le m}(\mathcal{E}_i\cup \overline{\mathcal{E}}_i)\right]\right)^{\alpha_\gamma n}\label{eq:complement}\\
    &\le \left(1-2m\cdot 2^{-k}+\sum_{1\le i,j\le m}\mathbb{P}\bigl[\mathcal{E}_i\cap \overline{\mathcal{E}}_j\bigr]+\sum_{1\le i<j\le m}\mathbb{P}\bigl[\mathcal{E}_i\cap \mathcal{E}_j\bigr]+\sum_{1\le i<j\le m}\mathbb{P}\bigl[\overline{\mathcal{E}}_i\cap \overline{\mathcal{E}}_j\bigr]\right)^{\alpha_\gamma n}\label{eq:son}.
\end{align}
We now justify the lines above.~\eqref{eq:clauses-independent} follows from the fact that $\mathcal{C}_i$, $1\le i\le \alpha_\gamma n$, are independent.~\eqref{eq:complement} follows by observing that
\[
\left\{\mathcal{C}_1(\bs^{(i)}) = \mathcal{C}_1(\overline{\bs}^{(i)})=1,\forall i\in[m]\right\} = \bigcap_{i\le m}\bigl(\mathcal{E}_i^c\cap \overline{\mathcal{E}}_i^c\bigr).
\]
Finally,~\eqref{eq:son} is obtained by combining~\eqref{eq:2-to-k} with the inequality
 \begin{align*}
     & \mathbb{P}\bigl[\cup_{i\le m}(\mathcal{E}_i\cup\overline{\mathcal{E}}_i)\bigr]\\
     &\ge \sum_{1\le i\le m}\bigl(\mathbb{P}[\mathcal{E}_i]+\mathbb{P}[\overline{\mathcal{E}}_i]\bigr)-\sum_{1\le i,j\le m}\mathbb{P}\bigl[\mathcal{E}_i\cap \overline{\mathcal{E}}_j\bigr]-\sum_{1\le i<j\le m}\mathbb{P}\bigl[\mathcal{E}_i\cap \mathcal{E}_j\bigr]-\sum_{1\le i<j\le m}\mathbb{P}\bigl[\overline{\mathcal{E}}_i\cap \overline{\mathcal{E}}_j\bigr].
 \end{align*}
Applying Lemma~\ref{lemma:random-k-sat-prob}, we obtain that for any $1\le i<j\le m$,
\begin{equation}\label{eq:ei-ej}
\mathbb{P}\bigl[\mathcal{E}_i\cap\mathcal{E}_j\bigr] 
\le 2^{-k}\cdot (1-\xi+\eta)^k.
\end{equation}
Now fix $1\le i,j\le m$. Note that if $j\notin I_i$ for $I_i$ in~\eqref{eq:index-set}, $d_H(\bs^{(i)},\overline{\bs}^{(j)})\ge (\xi-\eta) n/2$ and consequently
\[
\mathbb{P}\bigl[\mathcal{E}_i\cap \overline{\mathcal{E}}_j\bigr]\le 2^{-k}\cdot \left(1-\frac{\xi-\eta}{2}\right)^k,
\]
again by Lemma~\ref{lemma:random-k-sat-prob}. On the other hand if $j\in I_i$, then we trivially have 
\[
\mathbb{P}\bigl[\mathcal{E}_i\cap \overline{\mathcal{E}}_j\bigr]\le \mathbb{P}\bigl[\mathcal{E}_i\bigr]=2^{-k}.
\]
Recall that $|I_i|\le 1$. Thus for any $1\le i\le m$
\begin{equation*}
\sum_{1\le j\le m}\mathbb{P}\bigl[\mathcal{E}_i\cap \overline{\mathcal{E}}_j\bigr] \le 2^{-k} + (m-1)2^{-k}\left(1-\frac{\xi-\eta}{2}\right)^k,
\end{equation*}
so that
\begin{equation}\label{eq:ei-bar-ej}
    \sum_{1\le i,j\le m}\mathbb{P}\bigl[\mathcal{E}_i\cap \bar{\mathcal{E}}_j\bigr] \le m\cdot 2^{-k}+m(m-1)2^{-k}\left(1-\frac{\xi-\eta}{2}\right)^k.
\end{equation}
Combining~\eqref{eq:son} with~\eqref{eq:ei-ej} and~\eqref{eq:ei-bar-ej}, and using
\[
\left(1-\xi+\eta\right)^k\le \left(1-\frac{\xi-\eta}{2}\right)^k
\]
valid for all $0<\eta<\xi<1$, we establish Lemma~\ref{lemma:2nd-mom-pb}.
\end{proof}
Using Lemma~\ref{lemma:2nd-mom-pb}, we arrive at
\begin{align}
    A_\epsilon &\le m^2 2^n L \left(\sum_{0\le k\le \epsilon n}\binom{n}{k} \right)\cdot L \left(1-m\cdot 2^{-k}+2m(m-1)\cdot 2^{-k}\left(1-\frac{\xi-\eta}{2}\right)^k\right)^{\alpha_\gamma n} \nonumber\\
    &=m^2 2^n L^2 \binom{n}{\epsilon n}n^{O(1)} \left(1-m\cdot 2^{-k}+2m(m-1)\cdot 2^{-k}\left(1-\frac{\xi-\eta}{2}\right)^k\right)^{\alpha_\gamma n}.\label{eq:A-eps}
\end{align}
Combining~\eqref{eq:A-eps} with~\eqref{eq:lower-bd-N}, we obtain
\begin{align}
    \frac{A_\epsilon}{\mathbb{E}[N]^2} 
    &\le \frac{m^2 2^n L^2 \binom{n}{\epsilon n} n^{O(1)} \Bigl(1-m\cdot 2^{-k}+2m(m-1)\cdot 2^{-k}\left(1-\frac{\xi-\eta}{2}\right)^k\Bigr)^{\alpha_\gamma n}}{2^{2n}L^2 \Bigl(1-m\cdot 2^{-k}\Bigr)^{2\alpha_\gamma n}}\nonumber\\
    &=\exp\Bigl(-n\ln 2 + n \ln 2\cdot h_b(\epsilon)+ O(\ln n)\Bigr)\frac{\exp\Bigl(\alpha_\gamma n\ln\Bigl(1-m\cdot 2^{-k}+2m(m-1)\cdot 2^{-k}\left(1-\frac{\xi-\eta}{2}\right)^k\Bigr)\Bigr)}{\exp\Bigl(2\alpha_\gamma n\ln\Bigl(1-m\cdot 2^{-k}\Bigr)\Bigr)}\label{eq:A-eps-inter}.
\end{align}
We next employ the Taylor series, $\ln(1-x) = -x + O_x(x^2)$ valid as $x\to 0$. Note that $\xi,\eta$ and $m$ are fixed. So,  for all sufficiently large $k$, specifically when 
\[
\left(1-\frac{\xi-\eta}{2}\right)^k<\frac{1}{2(m-1)},
\]
we have
\begin{align}
    &\ln\left(1-m\cdot 2^{-k}+2m(m-1)\cdot 2^{-k}\left(1-\frac{\xi-\eta}{2}\right)^k\right)\nonumber\\
    &=-m\cdot 2^{-k} +2m(m-1)2^{-k}\left(1-\frac{\xi-\eta}{2}\right)^k +O_k\left(m^2 2^{-2k}\right)\label{tay-1}.
\end{align}
Similarly,
\begin{align}\label{tay-2}
    \ln\Bigl(1-m\cdot 2^{-k}\Bigr) = -m\cdot 2^{-k} + O_k\left(m^2 2^{-2k}\right).
\end{align}
Recall now that 
\[
\alpha_\gamma = \gamma \cdot 2^k\cdot \ln 2,\quad\text{where}\quad \gamma<\frac1m.
\]
Inserting~\eqref{tay-1} and~\eqref{tay-2} into~\eqref{eq:A-eps-inter}, we obtain
\begin{align*}
    \frac{A_\epsilon}{\mathbb{E}[N]^2} \le \exp\Bigl(n\ln 2\Bigl(-1+h_b(\epsilon)  +m\gamma +2m(m-1)(1-(\xi-\eta)/2)^k \gamma +O_k\left(m^2 2^{-k}\right)\Bigr)+O(\ln n)\Bigr).
\end{align*}
From~\eqref{eq:choice-of-eps}, we have $-\delta = -1+h_b(\epsilon)+m\gamma <0$. Now, set $K^*$ such that 
\[
\max\left\{2m(m-1)\left(1-\frac{\xi-\eta}{2}\right)^k\gamma\,,\,O_k\left(m^2 2^{-k}\right)\right\} \le -\frac{\delta}{4}.
\]
for all $k\ge K^*$. With this choice of $K^*$, we arrive at the conclusion
\begin{equation}\label{eq:A-subdominant}
\frac{A_\epsilon}{\mathbb{E}[N]^2}\le \exp\left(-\frac{\delta \ln 2}{2}n +O(\ln n)\right) = \exp\bigl(-\Theta(n)\bigr),
\end{equation}
for every $k\ge K^*$.
\subsubsection*{The Term $B_\epsilon/\mathbb{E}[N]^2$}
For $\Xi,\bar{\Xi}\in\mathcal{F}(\epsilon)$, we control the probability term by modifying Lemma~\ref{lemma:2nd-mom-pb}. 
\begin{lemma}\label{lemma:2nd-mom-pb2}
\begin{align*}
&\max_{\Xi,\bar{\Xi} \in \mathcal{F}(\epsilon)}\mathbb{P}\bigl[\Phi(\bs^{(i)})=\Phi(\overline{\bs}^{(i)})=1,\forall i\in[m]\bigr]\\
 &\le \Bigl(1-2m\cdot 2^{-k}+m(m-1)\cdot 2^{-k}\left(1-\xi+\eta\right)^k+m^2(1-\epsilon)^k 2^{-k}\Bigr)^{\alpha_\gamma n}.
\end{align*}
\end{lemma}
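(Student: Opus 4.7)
The plan is to follow exactly the template of Lemma~\ref{lemma:2nd-mom-pb}, but exploit the stronger hypothesis $(\Xi,\bar{\Xi})\in\mathcal{F}(\epsilon)$: it guarantees that \emph{every} cross-tuple distance $d_H(\bs^{(i)},\bar{\bs}^{(j)})$ exceeds $\epsilon n$, removing the need for the delicate $|I_i|\le 1$ bookkeeping that Lemma~\ref{lemma:2nd-mom-pb} had to carry out.

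First I would use independence of the $\alpha_\gamma n$ clauses to reduce the probability to $\mathbb{P}[\mathcal{C}_1(\bs^{(i)})=\mathcal{C}_1(\bar{\bs}^{(i)})=1,\,\forall i\in[m]]^{\alpha_\gamma n}$, and then write this single-clause event as the complement of $\bigcup_{i\le m}(\mathcal{E}_i\cup\bar{\mathcal{E}}_i)$, where $\mathcal{E}_i=\{\mathcal{C}_1(\bs^{(i)})=0\}$ and $\bar{\mathcal{E}}_i=\{\mathcal{C}_1(\bar{\bs}^{(i)})=0\}$. Applying Bonferroni in the same direction as in the proof of Lemma~\ref{lemma:2nd-mom-pb} and using $\mathbb{P}[\mathcal{E}_i]=\mathbb{P}[\bar{\mathcal{E}}_i]=2^{-k}$ yields an upper bound on the single-clause complement by
\[
1-2m\cdot 2^{-k}+\!\!\sum_{1\le i<j\le m}\!\!\mathbb{P}[\mathcal{E}_i\cap\mathcal{E}_j]+\!\!\sum_{1\le i<j\le m}\!\!\mathbb{P}[\bar{\mathcal{E}}_i\cap\bar{\mathcal{E}}_j]+\!\!\sum_{1\le i,j\le m}\!\!\mathbb{P}[\mathcal{E}_i\cap\bar{\mathcal{E}}_j].
\]

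Next I would handle the three families of pair intersections. Within each tuple, the constraint $\Xi,\bar{\Xi}\in\widehat{\mathcal{F}}(m,\beta,\eta)$ forces $d_H(\bs^{(i)},\bs^{(j)})\ge(\beta-\eta)n$ and similarly for $\bar{\bs}$, so Lemma~\ref{lemma:random-k-sat-prob} gives $\mathbb{P}[\mathcal{E}_i\cap\mathcal{E}_j],\mathbb{P}[\bar{\mathcal{E}}_i\cap\bar{\mathcal{E}}_j]\le 2^{-k}(1-\beta+\eta)^k$. Summing over the two families of $\binom{m}{2}$ pairs yields $m(m-1)\cdot 2^{-k}(1-\beta+\eta)^k$. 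For the cross-tuple family, the definition of $\mathcal{F}(\epsilon)$ implies $d_H(\bs^{(i)},\bar{\bs}^{(j)})>\epsilon n$ for all $1\le i,j\le m$ (no exceptional index set), so a direct application of Lemma~\ref{lemma:random-k-sat-prob} yields $\mathbb{P}[\mathcal{E}_i\cap\bar{\mathcal{E}}_j]\le 2^{-k}(1-\epsilon)^k$, and summing over all $m^2$ cross pairs contributes $m^2\cdot 2^{-k}(1-\epsilon)^k$.

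Assembling the three contributions with the $-2m\cdot 2^{-k}$ singleton term and raising the result to the $\alpha_\gamma n$-th power produces precisely the expression stated in the lemma. The proof is essentially bookkeeping; the only point requiring a moment of care is verifying that Bonferroni is being applied in the direction that yields an \emph{upper} bound on the no-violation probability (equivalently, a lower bound on $\mathbb{P}[\cup(\mathcal{E}_i\cup\bar{\mathcal{E}}_i)]$), exactly as was done in Lemma~\ref{lemma:2nd-mom-pb}. I do not anticipate any real obstacle, since the stronger cross-tuple distance hypothesis actively simplifies the argument compared to the previous lemma.
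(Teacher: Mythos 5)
Your proposal is correct and takes essentially the same approach as the paper: the paper's proof explicitly states it follows Lemma~\ref{lemma:2nd-mom-pb} verbatim except that the cross-tuple terms $\mathbb{P}[\mathcal{E}_i\cap\bar{\mathcal{E}}_j]$ are now uniformly bounded by $2^{-k}(1-\epsilon)^k$ via the hypothesis $d_H(\bs^{(i)},\bar{\bs}^{(j)})>\epsilon n$ for all $i,j$, which is precisely your observation that the exceptional-index bookkeeping ($I_i$) is no longer needed. Your derivation of the constant $m(m-1)2^{-k}(1-\beta+\eta)^k$ from the $2\binom{m}{2}$ within-tuple pairs and $m^2 2^{-k}(1-\epsilon)^k$ from the $m^2$ cross pairs matches the stated bound exactly.
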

\begin{proof}[Proof of Lemma~\ref{lemma:2nd-mom-pb2}]
The proof is  very similar to that of Lemma~\ref{lemma:2nd-mom-pb}, we only point out the necessary modification. Note that since $\Xi,\bar{\Xi}\in\mathcal{F}(\epsilon)$, we have 
\[
d_H\bigl(\bs^{(i)},\overline{\bs}^{(j)}\bigr)>\epsilon n,\quad \forall i,j\in[m].
\]
With this, we apply Lemma~\ref{lemma:random-k-sat-prob} to conclude
\[
\mathbb{P}\bigl[\mathcal{E}_i\cap \overline{\mathcal{E}}_j\bigr] \le (1-\epsilon)^k 2^{-k},
\]
where $\mathcal{E}_i$ and $\overline{\mathcal{E}}_j$ are the events appearing in~\eqref{eq:son}. This settles Lemma~\ref{lemma:2nd-mom-pb2}.
\end{proof}
\noindent Equipped with Lemma~\ref{lemma:2nd-mom-pb2}, we now control $B_\epsilon/\mathbb{E}[N]^2$. Note that
\begin{align*}
    B_\epsilon &= \sum_{(\Xi,\bar{\Xi})\in\mathcal{F}(\epsilon)}\mathbb{P}\Bigl[\Phi(\bs^{(i)}) = \Phi(\overline{\bs}^{(i)})=1,\forall i\in[m]\Bigr] \\
    &\le 2^{2n}L^2\Bigl(1-2m\cdot 2^{-k}+m(m-1)\cdot 2^{-k}\left(1-\xi+\eta\right)^k+m^2(1-\epsilon)^k 2^{-k}\Bigr)^{\alpha_\gamma n},
\end{align*}
where we used the trivial upper bound
\[
|\mathcal{F}(\epsilon)|\le |\widehat{\mathcal{F}}(m,\xi,\eta)\times \widehat{\mathcal{F}}(m,\xi,\eta)|=2^{2n}L^2
\]
and Lemma~\ref{lemma:2nd-mom-pb2}. Using~\eqref{eq:lower-bd-N}, we arrive at
\begin{equation}\label{eq:B-eps-over-E^2}
\frac{B_\epsilon}{\mathbb{E}[N]^2}\le \frac{\Bigl(1-2m\cdot 2^{-k}+m(m-1)\cdot 2^{-k}\left(1-\xi+\eta\right)^k+m^2(1-\epsilon)^k 2^{-k}\Bigr)^{\alpha_\gamma n}}{\Bigl(1-m\cdot 2^{-k}\Bigr)^{2\alpha_\gamma n}}.
\end{equation}
\paragraph{Choice of $k$} We set $k=\Omega(\log n)$, i.e.\,$k\ge C\ln n$ for some $C>0$ to be tuned and study the each term appearing in~\eqref{eq:B-eps-over-E^2}. First, using $\alpha_\gamma = \gamma \cdot 2^k\ln 2$ and the Taylor series $\ln(1-x)=-x+O(x^2)$ as $x\to 0$,
\begin{align}
    \left(1-m\cdot 2^{-k}\right)^{2\alpha_\gamma n}&=\exp\Bigl(2\alpha_\gamma n\ln\Bigl(1-m\cdot 2^{-k}\Bigr)\Bigr)\nonumber\\
    &=\exp\Bigl(n\ln 2\cdot 2^{k+1}\cdot \gamma\Bigl(-m\cdot 2^{-k}+O(2^{-2k})\Bigr)\Bigr)\nonumber\\
    &=\exp\Bigl(n\ln 2\Bigl(-2m\gamma +O(2^{-k}) \Bigr)\Bigr).\label{eq:den}
\end{align}
Second,
\begin{align}
    &\Bigl(1-2m\cdot 2^{-k}+m(m-1)\cdot 2^{-k}\left(1-\xi+\eta\right)^k+m^2(1-\epsilon)^k 2^{-k}\Bigr)^{\alpha_\gamma n}\nonumber\\
    &=\exp\Bigl(\alpha_\gamma n\ln\Bigl(1-2m\cdot 2^{-k}+m(m-1)\cdot 2^{-k}\left(1-\xi+\eta\right)^k+m^2(1-\epsilon)^k 2^{-k}\Bigr)\Bigr)\nonumber \\
    &=\exp\Bigl(n\ln 2\cdot 2^k\cdot\gamma\Bigl(-2m\cdot 2^{-k}+m(m-1)\cdot 2^{-k}\left(1-\xi+\eta\right)^k+m^2(1-\epsilon)^k 2^{-k}+O(2^{-2k})\Bigr)\Bigr)\nonumber\\
    &=\exp\Bigl(n\ln 2\Bigl(-2m\gamma+\gamma m(m-1)\cdot 2^{-k}\left(1-\xi+\eta\right)^k+\gamma m^2(1-\epsilon)^k +O(2^{-k})\Bigr)\Bigr).\label{eq:num}
\end{align}
We combine~\eqref{eq:den} and~\eqref{eq:num} to upper bound~\eqref{eq:B-eps-over-E^2}: 
\begin{equation}\label{eq:THIS-WILL-BE-USED}
\frac{B_\epsilon}{\mathbb{E}[N]^2}\le \exp\Bigl(\gamma \ln 2 \cdot m(m-1)\cdot n(1-\xi+\eta)^k + \gamma\ln 2\cdot  m^2 \cdot n(1-\epsilon)^k + O(2^{-k}n)\Bigr).
\end{equation}
Let $k\ge C\ln n$. Note that
\[
O\left(2^{-k}n\right)=O\left(e^{-C\ln 2\ln n}e^{\ln n}\right) = O\left(e^{\ln n(1-C\ln 2)}\right)=o_n(1),
\]
provided $C>1/\ln 2$. Next, 
\[
n(1-\xi+\eta)^k = \exp\Bigl(\ln n-k\ln(1-\xi+\eta)\Bigr) \le \exp\Bigl(\ln n\Bigl(1-C\ln (1-\xi+\eta)\Bigr)\Bigr) = o_n(1)
\]
if $C>1/\ln(1-\xi+\eta)$. Finally, 
\[
n(1-\epsilon)^k =  \exp\Bigl(\ln n-k\ln(1-\epsilon)\Bigr) \le \exp\Bigl(\ln n\Bigl(1-C\ln (1-\epsilon)\Bigr)\Bigr) = o_n(1)
\]
if $C>1/\ln(1-\epsilon)$. Thus, choosing $C$ such that
\[
C>\max\left\{\frac{1}{\ln 2},\frac{1}{\ln(1-\xi+\eta)},\frac{1}{\ln(1-\epsilon)}\right\}
\]
we have that 
\begin{equation}\label{eq:B-eps-over-N-sq}
    \frac{B_\epsilon}{\mathbb{E}[N]^2}\le \exp\bigl(o_n(1)\bigr) = 1+o_n(1),
    \end{equation}
as $\gamma,m=O_n(1)$. 
\paragraph{Combining Everything}
Using~\eqref{eq:A-eps-B-eps-term}, we have
\[
\mathbb{E}[N^2]\le A_\epsilon+B_\epsilon.
\]
Using now~\eqref{eq:A-subdominant},~\eqref{eq:B-eps-over-N-sq}, and Paley-Zygmund inequality (Lemma~\ref{lemma:PZ}), we obtain
\begin{equation*}
    \mathbb{P}[N\ge 1]\ge \frac{\mathbb{E}[N]^2}{\mathbb{E}[N^2]} \ge \frac{1}{A_\epsilon/\mathbb{E}[N]^2 + B_\epsilon/\mathbb{E}[N]^2} = \frac{1}{1+o_n(1) + \exp(-\Theta(n))} = 1-o_n(1).
\end{equation*}
This concludes the proof of Theorem~\ref{thm:m-ogp-k-sat-absent}.
\subsection{Proof of Theorem~\ref{thm:m-ogp-k-sat-absent-constant-k}}\label{sec:pf-thm:m-ogp-k-sat-absent-constant-k}
Fix $\gamma<1/m$, $0<\eta<\xi<1$ and recall $Z_{\xi,\eta}$ from~\eqref{eq:Z-beta-eta}. We first establish a concentration property.
\begin{proposition}\label{prop:azuma-for-Z}
Let $M=\alpha_\gamma n$, where $\alpha_\gamma = \gamma 2^k\ln 2$. Then, for any $t_0\ge 0$, 
\begin{equation*}
\mathbb{P}\bigl[\bigl|Z_{\xi,\eta}-\mathbb{E}[Z_{\xi,\eta}]\bigr|\ge t_0\bigr]\le 2\exp\left(-\frac{t_0^2}{M}\right).
\end{equation*}
\end{proposition}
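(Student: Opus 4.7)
The plan is to view $Z_{\beta,\eta}$ as a function of the $M$ i.i.d.\,clauses $\mathcal{C}_1,\dots,\mathcal{C}_M$ and to show that it has the bounded differences property with constant $1$ in each coordinate; the conclusion is then a direct consequence of McDiarmid's inequality (or equivalently, Azuma applied to the Doob martingale generated by revealing the clauses one at a time).

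First, I would write $Z_{\beta,\eta}=f(\mathcal{C}_1,\dots,\mathcal{C}_M)$ for the map
\[
f(\mathcal{C}_1,\dots,\mathcal{C}_M) = \max_{(\bs^{(1)},\dots,\bs^{(m)})\in\widehat{\mathcal{F}}(m,\beta,\eta)} \min_{1\le i\le m} \sum_{j=1}^M \ind\{\mathcal{C}_j(\bs^{(i)})=1\},
\]
where $\widehat{\mathcal{F}}(m,\beta,\eta)$ is the set from~\eqref{eq:S-beta}. Fix an index $j_0\in[M]$ and let $\mathcal{C}_{j_0}'$ be any other clause; denote by $f'$ the value of $f$ obtained by replacing $\mathcal{C}_{j_0}$ with $\mathcal{C}_{j_0}'$. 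For every fixed $\bs\in\{0,1\}^n$, the count $\widehat{\mathcal{L}}(\bs)=\sum_{j}\ind\{\mathcal{C}_j(\bs)=1\}$ differs between the two instances by the contribution of at most the single coordinate $j_0$, and hence
\[
\bigl|\widehat{\mathcal{L}}(\bs) - \widehat{\mathcal{L}}'(\bs)\bigr|\le 1.
\]
Since $|\min_i a_i - \min_i b_i|\le \max_i |a_i-b_i|$ and similarly for $\max$, the same bound propagates to the inner minimum and then to the outer maximum, yielding $|f-f'|\le 1$.

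With bounded differences $c_j=1$ for every $j\in[M]$, McDiarmid's inequality gives
\[
\mathbb{P}\bigl[|Z_{\beta,\eta}-\mathbb{E}[Z_{\beta,\eta}]|\ge t_0\bigr]\le 2\exp\!\left(-\frac{2t_0^2}{\sum_{j\le M} c_j^2}\right) = 2\exp\!\left(-\frac{2t_0^2}{M}\right),
\]
which is stronger than the stated bound $2\exp(-t_0^2/M)$ and therefore completes the proof. There is no substantive obstacle here; the only item that requires care is the propagation of the single-coordinate perturbation through the nested $\min/\max$, but this is immediate from the standard contractivity of $\min$ and $\max$ under $\ell_\infty$ perturbations.
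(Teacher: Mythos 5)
Your proof is correct and follows essentially the same approach as the paper: view $Z_{\beta,\eta}$ as a function of the $M$ independent clauses, establish the bounded-differences property with constant $1$ per coordinate (the propagation through the nested $\min$/$\max$ is exactly the observation the paper relies on), and invoke a standard concentration inequality for functions with bounded differences. The paper cites Azuma applied to the clause-revealing Doob martingale while you cite McDiarmid; these are the same underlying argument, and your McDiarmid constant in fact yields the slightly stronger exponent $-2t_0^2/M$, which implies the stated $-t_0^2/M$.
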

\begin{proof}[Proof of Proposition~\ref{prop:azuma-for-Z}]
Viewing $Z:= Z_{\xi,\eta}$ as a function of clauses, $Z=Z(\mathcal{C}_1,\dots,\mathcal{C}_M)$, we observe the Lipschitz property: if $\mathcal{C}_i'$ is obtained by independently resampling clause $i$, then
\[
\bigl|Z(\mathcal{C}_1,\dots,\mathcal{C}_M) - Z(\mathcal{C}_1,\dots,\mathcal{C}_{i-1},\mathcal{C}_i',\mathcal{C}_{i+1},\dots,\mathcal{C}_M)\bigr|\le 1.
\]
Proposition~\ref{prop:azuma-for-Z} then follows immediately from the bounded difference inequality~\cite[Theorem~1.1]{warnke2016method}---also known as McDiarmid's inequality.
\end{proof}
In the remainder, let $N=|\mathcal{S}_{\mathrm{k-SAT}}(\gamma,m,\xi,\eta,0)|$ and $\epsilon$ be an arbitrary value such that 
\[
\gamma m+h_b(\epsilon)-1<0.
\]
The moment calculation in the proof of Theorem~\ref{thm:m-ogp-k-sat-absent}, in particular~\eqref{eq:A-subdominant} and~\eqref{eq:THIS-WILL-BE-USED}, implies that there exists a $K^*$ such that
\begin{equation}\label{eq:2nd-mom-to-repair}  
\mathbb{P}\bigl[N\ge 1\bigr]\ge \exp\bigl(-2\alpha_\gamma nm^2 \bar{\zeta}^k\bigr),
\end{equation}
for all $k\ge K^*$, where $\bar{\zeta}$ is an arbitrary constant satisfying
\begin{equation}\label{eq:bar-Zeta}
1>\bar{\zeta}>\max\left\{\frac{1-\xi+\eta}{2},\frac{1-\epsilon}{2}\right\}
\end{equation}
For $M=n\alpha_\gamma$, $\alpha_\gamma=\gamma 2^k\ln 2$, we observe that
\[
\{N\ge 1\} = \{Z_{\xi,\eta}=M\}.
\]
Fix an arbitrary $\zeta'\in(\bar{\zeta},1)$. Notice that,
\begin{equation}\label{eq:choice-of-t-star}
    \frac{(t^*)^2}{M}\ge 2\alpha_\gamma nm^2 \bar{\zeta}^k,\quad\text{for}\quad t^* = \sqrt{3}n\alpha_\gamma m(\zeta')^{k/2}.
\end{equation}
Combining~\eqref{eq:2nd-mom-to-repair} and~\eqref{eq:choice-of-t-star} and applying Proposition~\ref{prop:azuma-for-Z} with $t_0=t^*$, we arrive at
\begin{align*}
    \mathbb{P}[Z_{\xi,\eta}=M]=\mathbb{P}[N\ge 1]&\ge \exp\bigl(-2\alpha_\gamma nm^2 \bar{\zeta}^k\bigr) \\
    &\ge 2\exp\left(-\frac{(t^*)^2}{M}\right)\ge \mathbb{P}[Z_{\xi,\eta}\ge \mathbb{E}[Z_{\xi,\eta}]+t^*],
\end{align*}
In particular, $\mathbb{E}[Z_{\xi,\eta}]\ge M-t^*$. 

We apply Proposition~\ref{prop:azuma-for-Z} once again, this time with $t_0=\sqrt{\alpha_\gamma}n$ to obtain that with probability $1-e^{-\Theta(n)}$, there is a $C>0$ such that 
\begin{align*}
    Z_{\xi,\eta}\ge \mathbb{E}[Z_{\xi,\eta}]-\sqrt{\alpha_\gamma}n &\ge n\alpha_\gamma -n\cdot\alpha_\gamma\sqrt{3}m (\zeta')^{k/2} - n\sqrt{\alpha_\gamma}\\
    &\ge n\alpha_\gamma - n\alpha_\gamma \sqrt{3}m(\zeta')^{k/2} - n2^{k/2}\sqrt{\gamma \ln 2} \\
    &\ge n\alpha_{\gamma}\Bigl(1-C\zeta^{k/2}\Bigr)
\end{align*}
for all large enough $k,n$, where $\zeta\in (\zeta',1)$ is arbitrary.\footnote{Recall that $\bar{\zeta}<\zeta'<\zeta<1$, where $\bar{\zeta}$ is defined in~\eqref{eq:bar-Zeta}, $\zeta'$ is defined in~\eqref{eq:choice-of-t-star}, and $\zeta\in(\zeta',1)$ is arbitrary.} Finally
\[
\bigl\{Z_{\xi,\eta}\ge n\alpha_\gamma(1-C\zeta^{k/2})\bigr\} = \Bigl\{\mathcal{S}_{{\mathrm{k-SAT}}}(\gamma,m,\xi,\eta,C\zeta^{k/2})\ne\varnothing\Bigr\},
\]
completing the proof of Theorem~\ref{thm:m-ogp-k-sat-absent-constant-k}.

\subsubsection*{Acknowledgments}
I am grateful to David Gamarnik for his guidance and many helpful discussions during the early stages of this work. I would like to thank Aukosh Jagannath for numerous valuable discussions regarding spin glasses, Brice Huang for carefully proofreading the draft and offering insightful comments, Lutz Warnke for a helpful discussion on the bounded differences inequality, and anonymous referees for their valuable feedback.
\bibliographystyle{amsalpha}
\bibliography{overlap/bibliom}
\end{document}